\documentclass[12pt,reqno]{amsart}
\usepackage{listings}
\usepackage{xr}
\usepackage{hyperref}
\usepackage{latexsym,amssymb,braket}
\usepackage{mathrsfs}
\usepackage{enumerate}
\usepackage{enumitem}
\usepackage{bm}
\usepackage[all]{xy}
\allowdisplaybreaks[1]
\usepackage{longtable}
\usepackage{amsmath}
\usepackage{float}
\usepackage{bbm}
\usepackage{mathtools}
\usepackage{comment}
\usepackage{hyperref}

\usepackage{listings}
\usepackage{listings,jvlisting}
\usepackage{listings, xcolor}
\usepackage{rotating}
\newtheorem{thm}{Theorem}[section]
\newtheorem{lem}[thm]{Lemma}

\newtheorem{prob}[thm]{Problem}

\theoremstyle{definition}

\newtheorem{defn}[thm]{Definition}
\newtheorem{rem}[thm]{Remark}
\newtheorem{exam}[thm]{Example}

\numberwithin{equation}{section}

\newcommand{\C}{\mathbb{C}}
\newcommand{\T}{\mathbb{T}}

\DeclareMathOperator{\End}{End}

\DeclareMathOperator{\Irr}{Irr}

\DeclareMathOperator{\Mor}{Mor}

\DeclareMathOperator{\Rep}{Rep}

\newcommand{\Cstar}{\mathrm{C}^*\!}
\newcommand{\Gizumi}{G_{\textrm{IK}}}

\newcommand{\columnvec}[2]{
	\begin{pmatrix}
		#1 \\ #2
	\end{pmatrix}
}

\newcommand{\dualcolumnvecvec}[4]{
	\widehat{
		\begin{pmatrix}
			#1 \\ #2 \\ #3 \\ #4
		\end{pmatrix}
	}
}

\makeatletter
\@namedef{subjclassname@2020}{%
	\textup{2020} Mathematics Subject Classification}
\makeatother
\begin{document}

\title{Complete Isocategorical Classification of Groups of Order $64$ via GAP}

\author[S. Sato]{Shoki Sato}
\address{
Department of Mathematics, Waseda University,
Tokyo \mbox{169-8050},
JAPAN}
\email{barley-tea-3416@moegi.waseda.jp}

\subjclass[2020]{Primary 16T20; Secondary 46L67}
\keywords{Finite groups, Compact quantum groups}

\maketitle

\begin{abstract}
	The classification of finite groups under monoidal equivalence is a fundamental topic in the study of finite quantum groups. 
	While a complete classification has been established for all groups of order strictly less than 64, the case for order 64 has remained limited to the construction of specific examples. 
	In this study, we achieve the complete classification for groups of order 64 by developing an original computational approach using GAP. 
	We describe our methodology and demonstrate that there exist exactly two pairs of non-isomorphic isocategorical groups of this order.
\end{abstract}

\section{Introduction}

The classification of finite groups under monoidal equivalence is a fundamental problem in the study of finite quantum groups and fusion categories. 
Two groups $G$ and $H$ are said to be monoidally equivalent (or isocategorical) if their respective representation categories $\Rep(G)$ and $\Rep(H)$ are equivalent as tensor categories.

To date, a complete classification based on this equivalence has been established for all groups of order strictly less than 64 \cite{GoyvaertsMeir2014}. 
However, the case of order $64 = 2^6$ has remained a significant open challenge. 
The difficulty of this specific order arises from the structural requirements for a group $G$ to possess a non-trivial monoidally equivalent partner. 
It is a well-established result that if $\Rep(G) \simeq \Rep(H)$ as tensor categories, then $H$ must be isomorphic to a "twisted" group $G^\omega$ constructed via a $G$-invariant non-degenerate 2-cocycle $\omega \in Z^2(\widehat{N}, \mathbb{T})$ on a normal abelian subgroup $N \trianglelefteq G$ \cite{etingof2000isocategoricalgroups,Wassermann1989}.

Furthermore, order $64$ is recognized as the minimal order for which non-isomorphic, yet monoidally equivalent, pairs of groups exist \cite{etingof2000isocategoricalgroups}. 
The most prominent examples are the Izumi--Kosaki pairs, which were originally identified through the study of subfactors \cite{IzumiKosaki2002}. 
Despite the theoretical importance of these examples, the sheer number of groups of order $64$, totaling $267$ distinct isomorphism classes, and the complexity of their internal subgroup structures have prevented an exhaustive classification until now.

In this paper, we resolve this problem by combining elementary algebraic arguments with a systematic computational approach. 
We have developed an original suite of functions in GAP (Groups, Algorithms, and Programming) to exhaustively scan all \(267\) groups of order \(64\). 
By evaluating $G$-invariance and non-degeneracy conditions across candidate normal subgroups, we successfully achieve the complete classification of groups of order \(64\) under monoidal equivalence.

\section{Preliminaries}
\subsection{Finite quantum group}
Let \(A\) be a finite dimensional \(\Cstar\)-algebra and \(\Delta\colon A\to A\otimes A\) a unital faithful \(*\)-homomorphism. 
A pair \((A,\Delta)\) is called \textit{a finite quantum group} if \(\Delta\) satisfies the cancellation properties \(\Delta(A)(1_A\otimes A)=A\otimes A\) and \(\Delta(A)(A\otimes 1_A)=A\otimes A\). 
It is denoted by \(G\coloneqq(A,\Delta)\) and \(A\coloneqq C(G)\) as is standard.
Let \(\iota\colon C(G)\to C(G)\) be the identity map.
The two maps \(S\colon C(G)\to C(G)\), \(\varepsilon\colon C(G)\to\C\) are respectively called the  \textit{antipode}, and the \textit{counit} if they satisfy the equalities
\begin{align*}
(\varepsilon\otimes\iota)\Delta(a)=(\iota\otimes\varepsilon)\Delta(a)=a \\
m(S\otimes\iota)\Delta(a)=m(\iota\otimes S)\Delta(a)=\varepsilon(a)1
\end{align*}
for all \(a\in C(G)\), where \(m\colon C(G)\otimes C(G)\to C(G)\) is the multiplication map.
It is well known that any finite quantum group possesses a unique antipode and counit. 

\begin{exam}
	Let \(G\) be a finite group and \(C(G)\) denote the unital \(\Cstar\)-algebra of \(\C\)-valued functions on \(G\) equipped with pointwise operations. We define the comultiplication \(\Delta\colon C(G)\to C(G)\otimes C(G)\simeq C(G\times G)\) by \(\Delta(f)(g,h)\coloneqq f(gh)\) for \(f\in C(G)\) and \(g,h\in G\). It is straightforward to verify that  \((C(G),\Delta)\) is a finite quantum group.
\end{exam}

\begin{exam}\label{abelian finite group}
	Let \(G\) be a finite group and \(\C[G]\) the group algebra of \(G\). 
	We define the comultiplication  \(\Delta\colon \mathbb{C}[G]\to\mathbb{C}[G]\otimes\mathbb{C}[G]\) by \(\Delta(g)\coloneqq g\otimes g\) for \(g\in G\). 
	Then \((\mathbb{C}[G],\Delta)\) is a finite quantum group. When \(G\) is abelian, the Foulier transform induces an isomorphism \(\mathbb{C}[G]\simeq C(\widehat{G})\) as unital \(\Cstar\)-algebras, where \(\widehat{G}\) denotes the Pontryagin dual of \(G\). 
\end{exam}

A finite quantum group \((A,\Delta)\) is called \textit{cocommutative} if \(\Delta\) satisfies the equation \(\Delta=\Delta^{\mathrm{op}}\), where \(\Delta^{\mathrm{op}}\coloneqq\Sigma\Delta\) and \(\Sigma\colon A\otimes A\to A\otimes A\) is the flip map. It is known that \((A,\Delta)\) is cocommutative if and only if 
it is isomorphic, as a finite quantum group, to the group algebra $(\mathbb{C}[G], \Delta)$ for some finite group $G$. 
As seen in Example \ref{abelian finite group}, if $G$ is an abelian group, then the function algebra $(C(G), \Delta)$ is cocommutative.

Let \(G\) be a finite quantum group and \(\widehat{A}\coloneqq C(G)^*\). 
For any $\varphi, \psi \in \widehat{A}$, we define the product $\varphi \cdot \psi$, the involution $\varphi^*$, and the dual comultiplication $\widehat{\Delta}\colon \widehat{A} \to \widehat{A} \otimes \widehat{A}$ by
\begin{align*}
(\varphi\cdot\psi)(a)\coloneqq(\varphi\otimes\psi)(\Delta(a)), \\
\varphi^*(a)\coloneqq \overline{\varphi}(S(a)), \\
\widehat{\Delta}(\varphi)(a\otimes b)\coloneqq\varphi(ab).
\end{align*}
Then \((\widehat{A},\widehat{\Delta})\) forms a finite quantum group, referred to as the \textit{dual quantum group of $G$} and denoted by $\widehat{G} = (\widehat{A}, \widehat{\Delta})$. 
In the case where $G$ is a finite group, there exists a canonical isomorphism
\begin{equation}\label{iso1}
C(\widehat{G})\to\mathbb{C}[G],\ \varphi\to\sum_{g\in G}\varphi(g)g.
\end{equation}

\subsection{Representation category}
Let \(G\) be a finite quantum group and \(\mathcal{H}\) a finite dimensional Hilbert space.
A unitary operator $U \in B(\mathcal{H}) \otimes C(G)$ is called \textit{a unitary representation of $G$} if \(U\) satisfies the equality
\[(\iota\otimes\Delta)(U)=U_{12}U_{13},\]
where $U_{12} \coloneqq U \otimes 1_{C(G)}$ and $U_{13} \coloneqq (\text{id} \otimes \Sigma)(U \otimes 1_{C(G)})$.
In this context, $\mathcal{H}$ is referred to as the representation Hilbert space of $U$, often denoted by $\mathcal{H}_U$.
For two unitary representations $U$ and $V$ of $G$, we define the space of morphisms from $U$ to $V$ as
$$
\Mor(U, V) \coloneqq \{ T \in B(\mathcal{H}_U, \mathcal{H}_V) \mid (T \otimes \iota)U = V(T \otimes \iota) \}.
$$
We denote by $\Rep(G)$ the category of finite dimensional unitary representations of $G$, whose objects are unitary representations $U$ and whose morphisms are the spaces $\text{Mor}(U, V)$ as defined above.
It is well known that $\Rep(G)$ forms \textit{a $\Cstar$-tensor category}. 
For a detailed definition of $\Cstar$-tensor categories, we refer the reader to \cite[Chapter 2]{NeshveyevTuset2013}.

For a unitary representation \(U\), let  \(\End(U)\coloneqq\Mor(U,U)\). 
Then \(U\) is said to be \textit{irreducible} if it satisfies \(\End(U)=\C\).
It is well known that the following map is a \(*\)-isomorphism:
\begin{equation}\label{Irr}
C(\widehat{G})\to\bigoplus_{U\in \Irr(G)}B(\mathcal{H}_U),\ \varphi\mapsto((\iota\otimes\varphi)(U))_{U\in\Irr(G)},
\end{equation}
where $\Irr(G)$ denotes the set of isomorphism classes of irreducible unitary representations of $G$.

An element \((a_U)_{U\in\Rep(G)}\in\prod_{U\in\Rep(G)}B(\mathcal{H}_U) \) is said to be \textit{natural} if for any \(U\), \(V\in \Rep(G)\), the condition \(Ta_U=a_VT\) folds for all \(T\in \Mor(U,V)\). 
We denote the \(*\)-algebra of natural elements of \(G\) by \(\prod_{U\in\Rep(G)}^{\text{Nat}}B(\mathcal{H}_U)\) . 
The restriction map
\begin{equation}\label{Nat}
\prod_{U\in\Rep(G)}^{\mathrm{Nat}}B(\mathcal{H}_U)\to\bigoplus_{U\in\mathrm{Irr}(G)}B(\mathcal{H}_U)
\end{equation}
gives a \(*\)-isomorphism.
Consequently, by combining (\ref{iso1}), (\ref{Irr}), and (\ref{Nat}), we obtain the following sequence of $*$-isomorphisms:

\begin{equation}\label{isom}
C(\widehat{G})\simeq\mathbb{C}[G]\simeq\bigoplus_{U\in \Irr(G)}B(\mathcal{H}_U)\simeq\prod_{U\in\Rep(G)}^{\mathrm{Nat}}B(\mathcal{H}_U).
\end{equation}

\begin{defn}[Etingof and Gelaki \cite{etingof2000isocategoricalgroups}]
	Two finite quantum groups $G_1$ and $G_2$ are said to be \textit{isocategorical} (or \textit{monoidal equivalent}) if their representation categories $\text{Rep}(G_1)$ and $\text{Rep}(G_2)$ are equivalent as $C^*$-tensor categories in the sense of \cite[Section 2]{NeshveyevTuset2013}.
\end{defn}

\subsection{2-cocycles}
Let \(G\) be a finite group. 
A unitary element \(\omega\in \mathbb{C}[G]\otimes\mathbb{C}[G]\) is called a \textit{2-cocycle} if it satisfies the following cocycle relation:
\begin{equation}\label{2-cocycle equation}
(\Delta\otimes\iota)(\omega)(\omega\otimes 1)=(\iota\otimes\Delta)(\omega)(1\otimes\omega).
\end{equation}
We denote the set of all such 2-cocycles by $Z^2(\widehat{G}, \mathbb{T})$.
Two 2-cocycles $\omega_1, \omega_2 \in Z^2(\widehat{G}, \mathbb{T})$ are said to be equivalent, denoted by $\omega_1 \sim \omega_2$, if there exists a unitary $\xi \in \mathbb{C}[G]$ such that
\begin{equation}\label{equivalent cocycle relation}
\omega_2 = \Delta(\xi^*) \omega_1 (\xi \otimes \xi).
\end{equation}
The set of equivalence classes of 2-cocycles for $G$ is denoted by $H^2(\widehat{G}, \mathbb{T})$.

\begin{exam}
	When \(G\) is an abelian group, \(\mathbb{C}[G]\) is isomorphic to \(C(\widehat{G})\), where the function algebra on the Pontryagin dual \(\widehat{G}\), introduced as before. 	
	Under this identification, we can regard a 2-cocycle $\omega \in Z^2(\widehat{G}, \mathbb{T})$ as an element of $C(\widehat{G}) \otimes C(\widehat{G}) \cong C(\widehat{G} \times \widehat{G})$. 
	The cocycle relation (\ref{2-cocycle equation}) is then equivalent to the following equation:
	$$ 
	\omega(\rho, \sigma) \omega(\rho \sigma, \tau) = \omega(\sigma, \tau) \omega(\rho, \sigma \tau) \quad \text{for all } \rho, \sigma, \tau \in \widehat{G}, 
	$$
	which is the standard $\mathbb{T}$-valued 2-cocycle condition on $\widehat{G}$.
	Furthermore, two 2-cocycles $\omega_1, \omega_2 \in C(\widehat{G}) \otimes C(\widehat{G})$ are equivalent in the sense of (\ref{equivalent cocycle relation}) if and only if there exists a function $\xi\colon \widehat{G} \to \mathbb{T}$ such that
	$$ 
	\omega_1(\sigma, \tau) \overline{\omega_2(\sigma, \tau)} = \xi(\sigma) \xi(\tau) \overline{\xi(\sigma \tau)} \quad \text{for all } \sigma, \tau \in \widehat{G}. 
	$$
	This recovers the standard equivalence relation for 2-cocycles in $H^2(\widehat{G}, \mathbb{T})$. We denote the coboundary by $\partial \xi(\sigma, \tau) \coloneqq \xi(\sigma) \xi(\tau) \overline{\xi(\sigma \tau)}$. 
	For a comprehensive treatment of group cohomology, we refer the reader to \cite{brown2012cohomology}.
\end{exam}

Let $N$ be an abelian normal subgroup of $G$. 
A \textit{skew-symmetric bicharacter} on $\widehat{N}$ is a map $\beta\colon \widehat{N} \times \widehat{N} \to \mathbb{T}$ that is multiplicative in each variable and satisfies
$$ 
\beta(\sigma, \tau) = \overline{\beta(\tau, \sigma)} \quad \text{and} \quad \beta(\sigma, \sigma) = 1 
$$
for all $\sigma, \tau \in \widehat{N}$. 
For any 2-cocycle $\omega \in Z^2(\widehat{N}, \mathbb{T})$, the formula
$$ 
\beta_\omega(\sigma, \tau) \coloneqq \omega(\sigma, \tau) \overline{\omega(\tau, \sigma)} 
$$
defines a skew-symmetric bicharacter. 
It is a well-known result that the map $\omega \mapsto \beta_\omega$ induces a group isomorphism from the second cohomology group $H^2(\widehat{N}, \mathbb{T})$ onto the group of skew-symmetric bicharacters on $\widehat{N}$.

For $\sigma \in \widehat{N}$ and $g \in G$, we define $\sigma^g \in \widehat{N}$ by $\sigma^g(n) \coloneqq \sigma(gng^{-1})$. 
For a 2-cocycle $\omega \in Z^2(\widehat{N}, \mathbb{T})$, let $\omega^g \in Z^2(\widehat{N}, \mathbb{T})$ be given by $\omega^g(\sigma, \tau) \coloneqq \omega(\sigma^g, \tau^g)$. 
A 2-cocycle $\omega \in Z^2(\widehat{N}, \mathbb{T})$ is said to be \textit{$G$-invariant} if $\omega^g \sim \omega$ for all $g \in G$. 
From the isomorphism $\omega \mapsto \beta_\omega$ mentioned above, we obtain the following characterization of $G$-invariance in terms of bicharacters.

\begin{lem}
	Let \(N\) be an abelian normal subgroup of a finite group \(G\) and \(\omega_1\), \(\omega_2\in Z^2(\widehat{N},\mathbb{T})\). Then \(\omega_1\sim\omega_2\) if and only if \(\beta_{\omega_1}=\beta_{\omega_2}\). In particular, \(\omega\in Z^2(\widehat{N},\mathbb{T})\) is \(G\)-invariant iff \(\beta_{\omega^g}=\beta_\omega\) holds for all \(g\in G\). 
\end{lem}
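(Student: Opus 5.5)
The plan is to derive both assertions from the stated fact that $\omega\mapsto\beta_\omega$ induces a group isomorphism from $H^2(\widehat{N},\mathbb{T})$ onto the group of skew-symmetric bicharacters on $\widehat{N}$. Since $N$ is abelian, the Example above lets us regard 2-cocycles in $Z^2(\widehat{N},\mathbb{T})$ as $\mathbb{T}$-valued functions on $\widehat{N}\times\widehat{N}$ satisfying the usual cocycle identity. Under this identification, the equivalence (\ref{equivalent cocycle relation}) becomes $\omega_1\overline{\omega_2}=\partial\xi$ for some $\xi\colon\widehat{N}\to\mathbb{T}$. With that translation in hand, both directions reduce to injectivity and well-definedness of the map $[\omega]\mapsto\beta_\omega$.

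For the first assertion, I would argue the two directions separately. For ``only if'', suppose $\omega_2=\omega_1\overline{\partial\xi}$. Then $\beta_{\omega_2}=\beta_{\omega_1}\overline{\beta_{\partial\xi}}$, and $\beta_{\partial\xi}(\sigma,\tau)=\xi(\sigma)\xi(\tau)\overline{\xi(\sigma\tau)}\,\overline{\xi(\tau)\xi(\sigma)\overline{\xi(\tau\sigma)}}=1$ because $\widehat{N}$ is abelian, so $\sigma\tau=\tau\sigma$. This is just the well-definedness of the induced map on cohomology. For ``if'', note that $\beta$ is multiplicative in $\omega$, since pointwise products of cocycles are cocycles and $\beta_{\omega_1\overline{\omega_2}}=\beta_{\omega_1}\overline{\beta_{\omega_2}}$. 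So $\beta_{\omega_1}=\beta_{\omega_2}$ gives $\beta_{\omega_1\overline{\omega_2}}=1$. Injectivity of the isomorphism then yields $[\omega_1\overline{\omega_2}]=1$ in $H^2$, that is, $\omega_1\sim\omega_2$.

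For the second assertion, I would first check that $\omega^g$ is again a 2-cocycle. The map $\sigma\mapsto\sigma^g$ is a group automorphism of $\widehat{N}$, because $n\mapsto gng^{-1}$ is an automorphism of $N$ by normality. Precomposing the cocycle identity with this automorphism preserves it. Applying the first assertion to the pair $\omega^g,\omega$ gives $\omega^g\sim\omega$ if and only if $\beta_{\omega^g}=\beta_\omega$. Quantifying over $g\in G$ is then exactly the definition of $G$-invariance.

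There is no serious obstacle, since the substantive content is the cited isomorphism theorem. The only point needing care is the translation of (\ref{equivalent cocycle relation}) into the coboundary relation on $\widehat{N}$, including the conjugations and the placement of $\xi$ versus $\xi^*$. The paper's Example already provides that translation, so I would invoke it directly.
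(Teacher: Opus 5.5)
Your proposal is correct and takes the same route as the paper, which gives no separate argument and simply deduces the lemma from the cited isomorphism $[\omega]\mapsto\beta_\omega$ from $H^2(\widehat{N},\mathbb{T})$ onto the skew-symmetric bicharacters. You additionally write out what the paper leaves implicit: that $\beta$ is well-defined on classes and multiplicative, the injectivity step, and the check that $\omega^g$ is again a $2$-cocycle because $\sigma\mapsto\sigma^g$ is an automorphism of $\widehat{N}$.
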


Futhermore, \(\omega\in Z^2(\widehat{N},\T)\) is called \textit{non-degnerate} if the condition $\beta_\omega(\sigma, \tau) = 1$ for all $\tau \in \widehat{N}$ implies that $\sigma$ is the identity element of $\widehat{N}$.

\subsection{Actions on \(\Cstar\)-algebras}
Let $G$ be a finite group and $A$ be a unital $C^*$-algebra. 
A group homomorphism $\alpha\colon G \to \text{Aut}(A)$ is called an \textit{action} of $G$ on $A$, denoted by $G \overset{\alpha}{\curvearrowright} A$, where $\text{Aut}(A)$ denotes the group of $*$-automorphisms of $A$. 
Let $A^\alpha$ denote the fixed-point algebra with respect to $\alpha$. 
We say that the action $\alpha$ is \textit{ergodic} if $A^\alpha = \mathbb{C}1$.

Let $U$ be a unitary representation of $G$ on a Hilbert space $\mathcal{H}_U$. We denote by $\text{Mor}(\mathcal{H}_U, A)$ the vector space consisting of all linear maps $T\colon \mathcal{H}_U \to A$ that satisfy the equivariance condition
$$ 
T(U_g \xi) = \alpha_g(T \xi) 
$$
for all $g \in G$ and $\xi \in \mathcal{H}_U$, where $U_g \coloneqq (\iota \otimes \text{ev}_g)(U)$ and $\text{ev}_g\colon C(G) \to \mathbb{C}$ is the evaluation map at $g$. 
The action $\alpha$ is said to be of \textit{full multiplicity} if the equality
$$ 
\dim \text{Mor}(\mathcal{H}_U, A) = \dim \mathcal{H}_U 
$$
holds for every irreducible representation $U \in \text{Irr}(G)$.

Let $G_1$ and $G_2$ be finite groups, $A$ be a $C^*$-algebra, and $\alpha\colon G_1 \to \text{Aut}(A)$ and $\beta\colon G_2 \to \text{Aut}(A)$ be actions. 
These actions are said to be \textit{commutative} if the following equality holds:
$$ 
\beta_{g_2}(\alpha_{g_1}(a)) = \alpha_{g_1}(\beta_{g_2}(a)) 
$$
for all $g_1 \in G_1$, $g_2 \in G_2$, and $a \in A$. 
When the actions commute, we denote this situation by $G_1 \overset{\alpha}{\curvearrowright} A \overset{\beta}{\curvearrowleft} G_2$. 
We say that the commuting actions $G_1 \overset{\alpha}{\curvearrowright} A \overset{\beta}{\curvearrowleft} G_2$ are ergodic if each individual action is ergodic (i.e., $A^\alpha = \mathbb{C}1$ and $A^\beta = \mathbb{C}1$). 
The property of full multiplicity for the commuting actions $G_1 \overset{\alpha}{\curvearrowright} A \overset{\beta}{\curvearrowleft} G_2$ is defined analogously.

\subsection{Reduction to a factor}
Let $G$ be a finite group, \(N\) a subgroup of \(G\) and $\alpha\colon N \to \text{Aut}(B)$ be an action on a finite dimensional $C^*$-algebra $B$. 
We define the \textit{induced $C^*$-algebra} by
\[\mathrm{Ind}_N^GB\coloneqq\{f\in C(G,B)\mid f(gh)=\alpha_h^{-1}(f(g)),\ h\in N,\ g\in G\},\]
where $C(G, B)$ denotes the set of all functions from $G$ to $B$. The group $G$ acts on $\mathrm{Ind}_N^G B$ via the left translation.

Let $A$ be a finite-dimensional $C^*$-algebra. It follows from \cite{Wassermann1989} that for every ergodic action $G \overset{\alpha}{\curvearrowright} A$, there exist a subgroup $N \subset G$ and a finite-dimensional factor $B \cong B(\mathbb{C}^n)$ such that $A \cong \mathrm{Ind}_N^G B$ as $C^*$-algebras. 
The action $N \overset{\beta}{\curvearrowright} B(\mathbb{C}^n)$ is also ergodic. Furthermore, if $\alpha$ has full multiplicity, then $\beta$ also has full multiplicity.
The following results are well-known, but we provide their proofs for the reader's convenience.

\begin{lem}\label{character of N}
	Let $G_1 \curvearrowright A \curvearrowleft G_2$ be commuting ergodic actions of finite groups $G_1$ and $G_2$ on a finite-dimensional $C^*$-algebra $A$, both having full multiplicity. 
	Suppose there exist subgroups $N_1 \subset G_1$ and $N_2 \subset G_2$, and a positive integer $n$, such that $A$ is isomorphic to both $\mathrm{Ind}_{N_1}^{G_1} B(\mathbb{C}^n)$ and $\mathrm{Ind}_{N_2}^{G_2} B(\mathbb{C}^n)$ as $C^*$-algebras, where the commuting actions $N_1 \curvearrowright B(\mathbb{C}^n)$ and $N_2 \curvearrowright B(\mathbb{C}^n)$ are ergodic and of full multiplicity. 
	Then the following hold:
	\begin{enumerate}[font=\upshape]
		\item
		The order of \(N_1\), denoted by \(|N_1|\), is equal to \(n^2\).
		\item
		\(N_1\) is a normal subgroup of \(G_1\).
		\item
		\(N_1\) is abelian.
		\item
		If \(|N_1|\geq 2\), then \(N_1\) is not cyclic.
	\end{enumerate} 
\end{lem}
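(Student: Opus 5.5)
The plan is to extract everything from two numerical identities: the dimension count coming from full multiplicity, and the structure of the ergodic, full-multiplicity action $N_1 \curvearrowright B(\mathbb{C}^n)$. The argument runs in four steps.

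\textbf{Step 1: $|N_1| = n^2$.}
By full multiplicity of $G_1 \curvearrowright A$, each irreducible $U$ of $G_1$ appears in $A$ with multiplicity $\dim\mathcal{H}_U$. Hence
\[\dim A = \sum_{U\in\Irr(G_1)}(\dim\mathcal{H}_U)^2 = |G_1|.\]
On the other hand, $\dim \mathrm{Ind}_{N_1}^{G_1}B(\mathbb{C}^n) = [G_1:N_1]\,n^2$, since a function in the induced algebra is determined by its values on a set of coset representatives. Comparing the two gives $|N_1| = n^2$. The same argument applied to the reduced action $N_1 \curvearrowright B(\mathbb{C}^n)$, which is also ergodic of full multiplicity, gives $n^2 = |N_1|$ again as a consistency check.

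\textbf{Step 3: $N_1$ is abelian.}
This is the key structural step, and I do it before Step 2. Each $\alpha_h$ for $h\in N_1$ is an automorphism of $B(\mathbb{C}^n)$, hence inner: $\alpha_h = \mathrm{Ad}\,u_h$ for a unitary $u_h$ unique up to a scalar. This gives a projective representation $h\mapsto u_h$ with some 2-cocycle $c$. The spectral subspaces of $B(\mathbb{C}^n)$ under $N_1$ are then controlled by the span of the $u_h$. I would argue via the decomposition $B(\mathbb{C}^n) \cong \mathcal{H}_u\otimes\overline{\mathcal{H}_u}$ as an $N_1$-module. Ergodicity means $\mathrm{End}_{N_1}(\mathbb{C}^n)=\mathbb{C}$, so $u$ is an irreducible $c$-projective representation of dimension $n$ with $n^2 = |N_1|$. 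This forces $u$ to be the unique irreducible $c$-representation, i.e.\ $c$ is nondegenerate and the twisted group algebra $\mathbb{C}^c[N_1]\cong B(\mathbb{C}^n)$. Consequently the $u_h$ form a basis of $B(\mathbb{C}^n)$, and each $u_h$ spans an $N_1$-eigenspace, since $\alpha_k(u_h) = c(k,h)\overline{c(khk^{-1},k)}\,u_{khk^{-1}}$.

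Full multiplicity now requires that every irreducible representation $U$ of $N_1$ occurs with multiplicity $\dim \mathcal{H}_U$. But $B(\mathbb{C}^n) = \bigoplus_h \mathbb{C}u_h$ permutes the lines $\mathbb{C}u_h$ according to conjugacy. I would argue that the trivial representation occurs once for each conjugacy class $\{h\}$ whose associated character restricted to the centralizer is trivial. I would also compare the multiplicity of one-dimensional characters: there are $[N_1:N_1']$ of them, each required with multiplicity $1$, so at least $[N_1:N_1']$ lines $\mathbb{C}u_h$ must be $N_1$-stable, i.e.\ $h$ central. The cleaner route is counting: the sum over irreducibles of $(\dim \mathcal{H}_U)^2$ equals $|N_1|$ automatically, so this alone is not enough. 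Instead I would use that full multiplicity forces $B(\mathbb{C}^n)\cong \mathbb{C}[N_1]$ (the left regular representation), hence the character of the $N_1$-action equals $|N_1|\delta_e$. The trace of $\alpha_k$ is $\sum_{h:\,khk^{-1}=h} (\text{phase})$, which for $k\neq e$ must vanish. For $k$ central and nontrivial it is $\sum_h c(k,h)\overline{c(h,k)}$, which vanishes exactly when $h\mapsto c(k,h)\overline{c(h,k)}$ is a nontrivial character. For non-central $k$ with many fixed points the sum also must vanish. I expect this character computation to be the \textbf{main obstacle}. The goal is to show $N_1$ has no noncentral elements: if $k$ is not central, the trace counts only centralizer elements, and I would try to get a contradiction with the regular-representation condition via the character value at a noncentral $k$ on the central element $e$ line, which contributes phase $1$. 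Failing that, I would invoke the known theorem that groups admitting ergodic full-multiplicity actions on full matrix algebras (groups of central type with a $c$ such that $\mathbb{C}^c[N_1]$ is $N_1$-regular) must be abelian, as in the Wassermann setting.

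\textbf{Step 2: normality.}
Use the commuting ergodic action of $G_2$. The $G_1$-equivariant decomposition $A\cong\bigoplus_{gN_1\in G_1/N_1}B(\mathbb{C}^n)$ is a decomposition along the minimal central projections of $A$. The stabilizer in $G_1$ of a minimal central projection $p$ is a conjugate $gN_1g^{-1}$. Since $G_2$ commutes with $G_1$ and acts ergodically, hence transitively on minimal central projections, and $G_2$ permutes the projections $G_1$-equivariantly, all stabilizers coincide. Therefore $gN_1g^{-1}=N_1$ for every $g\in G_1$.

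\textbf{Step 4: $N_1$ is not cyclic when $|N_1|\geq 2$.}
An abelian group carrying a nondegenerate 2-cocycle (Step 3 gives $\mathbb{C}^c[N_1]\cong B(\mathbb{C}^n)$ with $n^2=|N_1|$) has a nondegenerate skew-symmetric bicharacter, by the isomorphism $H^2\cong$ skew bicharacters recalled above. A cyclic group has $H^2(\mathbb{Z}/m,\mathbb{T})=0$, so its only skew bicharacter is trivial. That bicharacter is nondegenerate only if $|N_1|=1$, which completes the proof.
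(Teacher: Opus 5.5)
Your Steps 1, 2 and 4 are fine. Step 1 counts $\dim A=|G_1|=[G_1:N_1]\,n^2$ instead of the paper's count inside $B(\mathbb{C}^n)$, and this works equally well. Step 2 is the paper's stabilizer argument. Step 4 needs only that cyclic groups are abelian with $H^2=0$, plus the nondegeneracy $\mathbb{C}^c[N_1]\cong B(\mathbb{C}^n)$, which you correctly derived from irreducibility of $u$ and $n^2=|N_1|$.

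The genuine gap is Step 3. You try to prove $N_1$ abelian using only the ergodic, full-multiplicity action of $N_1$ on $B(\mathbb{C}^n)$, and this cannot work, because such actions exist for non-abelian groups (groups of central type). For any nondegenerate $c$, uniqueness of the trace on $\mathbb{C}^c[N_1]\cong B(\mathbb{C}^n)$ gives $\operatorname{tr}u_k=n\,\delta_{k,e}$. Hence the character of $\mathrm{Ad}\,u$ is $|\operatorname{tr}u_k|^2=n^2\delta_{k,e}$, which is exactly the regular character. Your trace computation at non-central $k$ can therefore never produce a contradiction, and the ``known theorem'' you plan to fall back on is false.

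Here is a concrete counterexample. Take $N_1=D_8\times C_2=\langle x,y,s\rangle\times\langle t\rangle$, where $x,y,s$ are involutions with $xy=yx$ and $sxs=y$. Put $X=\left(\begin{smallmatrix}0&1\\1&0\end{smallmatrix}\right)$, $Z=\mathrm{diag}(1,-1)$, $H=(X+Z)/\sqrt2$, and set $R(x)=X\otimes 1$, $R(y)=Z\otimes 1$, $R(s)=H\otimes X$, $R(t)=1\otimes Z$. This is an irreducible projective representation of dimension $4$ with $\operatorname{tr}R(k)=0$ for $k\neq e$. So $\mathrm{Ad}\,R$ is an ergodic full-multiplicity action of a non-abelian group of order $16=4^2$ on $B(\mathbb{C}^4)$.

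The missing idea is to use the hypothesis you never touch in Step 3: the action $\beta$ of $N_2$ on the same $B(\mathbb{C}^n)$ commutes with $\alpha$. Then $\mathrm{Ad}(\beta_m(u_h))=\beta_m\circ\mathrm{Ad}\,u_h\circ\beta_m^{-1}=\alpha_h=\mathrm{Ad}\,u_h$, so $\beta_m(u_h)\in\mathbb{T}u_h$ for all $m\in N_2$ and $h\in N_1$. You already know the $u_h$ span $B(\mathbb{C}^n)$, so as an $N_2$-module $B(\mathbb{C}^n)$ is a sum of one-dimensional representations. Full multiplicity of $\beta$ requires every irreducible representation of $N_2$ to occur, so all of them are one-dimensional and $N_2$ is abelian. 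Exchanging the roles, with $\beta_m=\mathrm{Ad}\,v_m$, gives that $N_1$ is abelian. This is the paper's argument: abelianness comes from both sides of the bi-ergodic action being genuine groups, not from the one-sided action alone.
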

\begin{proof}
	\begin{enumerate}[leftmargin=*]
		\item
		Since the action $N_1 \curvearrowright B(\mathbb{C}^n)$ is of full multiplicity, the dimension of the $C^*$-algebra $B(\mathbb{C}^n)$ can be decomposed as follows:
		$$
		n^2 = \dim B(\mathbb{C}^n) = \sum_{U \in \text{Irr}(N_1)} (\dim \mathcal{H}_U) \times m_U,
		$$
		where $m_U$ denotes the multiplicity of the irreducible representation $U$ in $B(\mathbb{C}^n)$. 
		By the definition of full multiplicity, we have $m_U = \dim \mathcal{H}_U$ for every $U \in \text{Irr}(N_1)$. 
		Thus, the equality reduces to:
		$$
		n^2 = \sum_{U \in \text{Irr}(N_1)} (\dim \mathcal{H}_U)^2.
		$$
		Using the standard identity for finite groups, $\sum_{U \in \text{Irr}(N_1)} (\dim \mathcal{H}_U)^2 = |N_1|$, we conclude that $n^2 = |N_1|$.
		
		\item
		Let \(Z(A)\) be the center of \(A\). 
		Since $A$ is finite dimensional, we may assume that \(Z(A)=C(\Omega)\) for a finite set \(\Omega\). 
		Then both of actions \(G_1\overset{S}{\curvearrowright} \Omega\) and \(G_2\overset{T}{\curvearrowright} \Omega\) are commutative and transitive, so there is a point \(\varphi_0\in \Omega\) such that
		\[
		\Omega=\{S_{g_1}(\varphi_0)\mid g_1\in G_1\}=
		\{T_{g_2}(\varphi_0)\mid g_2\in G_2\}.
		\]
		Thanks to \cite[Corollary 8]{Wassermann1989}, the subgroup $N_1$ can be identified with the stabilizer of $\varphi_0$ in $G_1$:
		$$
		N_1 = \text{Iso}(\varphi_0) \coloneqq \{ g_1 \in G_1 \mid S_{g_1}(\varphi_0) = \varphi_0 \}. 
		$$
		For any $h_1 \in G_1$, the transitivity of the $T$-action ensures there exists some $h_2 \in G_2$ such that $S_{h_1}(\varphi_0) = T_{h_2}(\varphi_0)$. 
		We then examine the stabilizer of this point. On one hand, by the commutativity of the actions, we have:
		$$ 
		\text{Iso}(T_{h_2}(\varphi_0)) = \text{Iso}(\varphi_0) = N_1. 
		$$
		Indeed, $g \in \text{Iso}(T_{h_2}(\varphi_0))$ implies $S_g(T_{h_2}(\varphi_0)) = T_{h_2}(S_g(\varphi_0)) = T_{h_2}(\varphi_0)$, which yields $S_g(\varphi_0) = \varphi_0$ since $T_{h_2}$ is a permutation of $\Omega$. 
		On the other hand, a standard property of stabilizers on an orbit gives:
		$$ 
		\text{Iso}(S_{h_1}(\varphi_0)) = h_1 \text{Iso}(\varphi_0) h_1^{-1} = h_1 N_1 h_1^{-1}. 
		$$
		Combining these two equalities, we obtain $N_1 = h_1 N_1 h_1^{-1}$ for all $h_1 \in G_1$, which proves that $N_1$ is a normal subgroup of $G_1$.
		
		\item
		Let $\alpha\colon N_1 \to \text{Aut}(M_n(\mathbb{C}))$ and $\beta\colon N_2 \to \text{Aut}(M_n(\mathbb{C}))$ be the commuting actions. 
		Since $M_n(\mathbb{C})$ is a finite dimensional factor, there exist unitaries $u_n \in M_n(\mathbb{C})$ such that $\alpha_n = \text{Ad } u_n$ for each $n \in N_1$. 
		For any $m \in N_2$ and $n \in N_1$, the commutativity of the actions implies:
		$$ 
		\text{Ad}(\beta_m(u_n)) = \beta_m \circ \text{Ad } u_n \circ \beta_m^{-1} = \alpha_n = \text{Ad } u_n. 
		$$
		Since $\text{Ad}(a) = \text{Ad}(b)$ on $M_n(\mathbb{C})$ implies $a = \lambda b$ for some $\lambda \in \mathbb{C}^\times$, there exists a scalar $\mu_n(m) \in \mathbb{T}$ such that $\beta_m(u_n) = \mu_n(m) u_n$. 
		One can easily verify that the map $\pi_n\colon N_2 \to \mathbb{T}$ defined by $m \mapsto \mu_n(m)$ is a group homomorphism, representing a one dimensional representation of $N_2$. 
		By the ergodicity of the action $\alpha$, the commutant of $\{u_n \mid n \in N_1\}$ is trivial, which implies:
		\begin{equation}
		M_n(\mathbb{C}) = \text{span}\{u_n \mid n \in N_1\}. \label{span u_n}
		\end{equation}
		Equation \eqref{span u_n} shows that $M_n(\mathbb{C})$, as an $N_2$-module under the action $\beta$, decomposes into a direct sum of one-dimensional invariant subspaces $\mathbb{C} u_n$. 
		According to the full multiplicity property of the action $\beta$, every irreducible representation of $N_2$ must appear as a subrepresentation of $M_n(\mathbb{C})$. 
		Since we have shown that all irreducible subrepresentations in the decomposition of $M_n(\mathbb{C})$ are one-dimensional, it follows that all irreducible representations of $N_2$ are one dimensional. 
		This is equivalent to saying that $N_2$ is an abelian group. By symmetry, $N_1$ is also abelian.
		
		\item
		Suppose, toward a contradiction, that $N_1$ is a cyclic group.
		It is a well-known result in group cohomology that the second cohomology group of a finite cyclic group with coefficients in $\mathbb{T}$ is trivial, i.e., $H^2(N_1, \mathbb{T}) \cong 0$.
		As established in the proof of (3), the action of $N_1$ on $M_n(\mathbb{C})$ is determined by a projective unitary representation $u$ associated with a 2-cocycle $\omega \in Z^2(N_1, \mathbb{T})$. Since $H^2(N_1, \mathbb{T}) = 0$, the 2-cocycle $\omega$ is a coboundary, meaning there exists a map $\xi\colon N_1 \to \mathbb{T}$ such that $\omega(g, h) = \xi(g)\xi(h)\xi(gh)^{-1}$. Consequently, the projective representation $u$ is equivalent to an ordinary linear representation $\pi\colon N_1 \to \mathcal{U}(\mathbb{C}^n)$ given by $\pi(g) = \xi(g)^{-1} u_g$.Since $N_1$ is abelian (by part 3), all of its irreducible linear representations must be one-dimensional. However, we have shown in the proof of (3) that $u$ (and thus $\pi$) is an irreducible representation of dimension $n$. 
		This forces $ n = 1. $ According to the result from part (1), the order of the group is $|N_1| = n^2 = 1^2 = 1$. 
		This directly contradicts our assumption that $|N_1| \ge 2$. 
		Therefore, $N_1$ cannot be a cyclic group.
	\end{enumerate}
\end{proof}

\subsection{Isocategorical group and \(\Cstar\)-algebra}

Let $G$ be a finite group and $\omega \in Z^2(\widehat{G}, \mathbb{T})$ be a 2-cocycle. 
We can construct a finite quantum group $G^\omega$, which is isocategorical to $G$, in the following manner.
Using the isomorphisms in (\ref{isom}), we define a new coproduct $\widehat{\Delta}^\omega\colon C(\widehat{G}) \to C(\widehat{G}) \otimes C(\widehat{G})$ by
$$ 
\widehat{\Delta}^\omega(\varphi) \coloneqq \omega^* \widehat{\Delta}(\varphi) \omega 
$$
for $\varphi \in C(\widehat{G})$. 
The pair $(C(\widehat{G}), \widehat{\Delta}^\omega)$ then forms the dual of a new finite quantum group. 
We define $G^\omega$ to be the dual finite quantum group of $(C(\widehat{G}), \widehat{\Delta}^\omega)$.

The following result is a cornerstone in the classification of finite quantum groups with equivalent representation categories. 
It was established through the works of Etingof--Gelaki \cite{etingof2000isocategoricalgroups}.

\begin{thm}[Etingof and Gelaki \cite{etingof2000isocategoricalgroups}]
	Let $G$ be a finite group and $\omega \in Z^2(\widehat{G}, \mathbb{T})$ be a 2-cocycle. 
	Then the representation category of $G^\omega$ is equivalent to that of $G$ as a $C^*$-tensor category. 
	Conversely, any finite quantum group whose representation category is equivalent to $\text{Rep}(G)$ as a $C^*$-tensor category is obtained via such a 2-cocycle twist.
\end{thm}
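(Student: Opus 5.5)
The plan is to handle the two directions separately, working throughout on the dual side with the isomorphisms (\ref{isom}). These identify $C(\widehat{G})\simeq\mathbb{C}[G]$ with the algebra of natural elements $\prod^{\mathrm{Nat}}_{U\in\Rep(G)}B(\mathcal{H}_U)$.

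\textbf{First direction.} I will check that $(C(\widehat{G}),\widehat{\Delta}^\omega)$ is a finite quantum group. Coassociativity of $\widehat{\Delta}^\omega$ is a direct computation:
\[
(\widehat{\Delta}^\omega\otimes\iota)\widehat{\Delta}^\omega(\varphi)=(\omega^*\otimes 1)(\widehat{\Delta}\otimes\iota)(\omega^*)\,(\widehat{\Delta}\otimes\iota)\widehat{\Delta}(\varphi)\,(\widehat{\Delta}\otimes\iota)(\omega)(\omega\otimes 1).
\]
Applying the cocycle relation (\ref{2-cocycle equation}) turns this into the symmetric expression on the other side. Unitarity of $\omega$ makes $\widehat{\Delta}^\omega$ a unital faithful $*$-homomorphism. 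Cancellation follows by dimension count, since $\widehat{\Delta}^\omega$ differs from $\widehat{\Delta}$ only by a unitary conjugation.

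Next I construct a unitary fiber functor $F\colon\Rep(G)\to\mathrm{Hilb}$ with twisted tensor structure. Every $U\in\Rep(G)$ is a $*$-representation $\pi_U$ of $C(\widehat{G})$, and $\omega$ acts on $\mathcal{H}_U\otimes\mathcal{H}_V$ through $(\pi_U\otimes\pi_V)(\omega)$. I set $F(U)=\mathcal{H}_U$ and $F_2(U,V)=(\pi_U\otimes\pi_V)(\omega^*)$; the cocycle identity is exactly the hexagon-type coherence for $F_2$. By Tannaka--Krein reconstruction \cite{NeshveyevTuset2013}, the natural endomorphisms of $F$ form the algebra $C(\widehat{G})$, and the coproduct induced by $F_2$ is $\widehat{\Delta}^\omega$. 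Hence the reconstructed quantum group is $G^\omega$, and reconstruction gives $\Rep(G^\omega)\simeq\Rep(G)$ as $\Cstar$-tensor categories.

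\textbf{Converse.} Let $E\colon\Rep(H)\to\Rep(G)$ be a $\Cstar$-tensor equivalence. Composing it with the forgetful functor of $H$ gives a unitary fiber functor $F$ on $\Rep(G)$ whose dimension function agrees with that of the forgetful functor, because dimensions are intrinsic in a $\Cstar$-tensor category. The key step is to show that $F$ is unitarily isomorphic, as a plain functor, to the forgetful functor of $G$. Since $F(U)$ and $\mathcal{H}_U$ have the same dimension for each irreducible $U$, I choose unitaries $F(U)\cong\mathcal{H}_U$ on irreducibles and extend them by naturality to all of $\Rep(G)$ via semisimplicity. Transporting the tensor structure of $F$ through these unitaries gives a unitary natural family $\Omega_{U,V}$ on $\mathcal{H}_U\otimes\mathcal{H}_V$. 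Naturality in $U$ and $V$ means it is a natural element of $\Rep(G\times G)$, i.e.\ an element of $\mathbb{C}[G]\otimes\mathbb{C}[G]$. The associativity constraint of $F$ then becomes (\ref{2-cocycle equation}) for $\omega=\Omega^*$. Reconstructing $H$ from $F$ recovers $(C(\widehat{G}),\widehat{\Delta}^\omega)$ as its dual, so $H\cong G^\omega$.

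\textbf{Main obstacle.} I expect the hardest part to be the bookkeeping identifying the reconstructed coproduct precisely with $\omega^*\widehat{\Delta}(\cdot)\,\omega$, with the right placement of $\omega$ versus $\omega^*$ and the right order of legs. I will first verify this on irreducible components using (\ref{Irr}), where everything reduces to matrix identities, and then extend by naturality via (\ref{Nat}). Non-uniqueness of the choice of unitaries only changes $\omega$ by a coboundary as in (\ref{equivalent cocycle relation}), so the final statement does not depend on that choice.
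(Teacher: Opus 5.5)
The paper does not prove this theorem: it is quoted from Etingof--Gelaki and used as a black box, so there is no in-paper argument to compare yours with. Your proposal is the standard twisting/Tannaka--Krein argument, and its structure is right. For the converse, the needed steps are exactly the ones you list:
\begin{enumerate}
\item match dimensions on irreducibles;
\item choose a unitary isomorphism between $F$ and the forgetful functor as plain functors, by semisimplicity;
\item read the transported tensor structure as a unitary in $\mathbb{C}[G]\otimes\mathbb{C}[G]$;
\item identify $C(\widehat{H})\cong\End(F)$.
\end{enumerate}
There are two small slips. The fiber functor must be $\mathrm{Forget}_H\circ E^{-1}$, not $\mathrm{Forget}_H\circ E$. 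Also, $F_2=\omega^*$ is consistent only if $F_2$ goes $F(U\otimes V)\to F(U)\otimes F(V)$. With the convention $F(U)\otimes F(V)\to F(U\otimes V)$ of \cite{NeshveyevTuset2013}, take $F_2=\omega$; then coherence is literally (\ref{2-cocycle equation}) and the induced coproduct is $\omega^*\widehat{\Delta}(\cdot)\,\omega$.

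Two justifications need repair.

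\emph{The dimension step.} You claim $\dim F(U)=\dim\mathcal{H}_U$ ``because dimensions are intrinsic.'' This is the one place where finiteness is used, and as stated it does not suffice. A $\Cstar$-tensor equivalence preserves the intrinsic dimension $d(U)$, but $d(U)=\dim\mathcal{H}_U$ only in the Kac case. For $SU_q(2)$ with $0<|q|<1$, the $2$-dimensional fundamental representation has $d=|q|+|q|^{-1}$, and for general compact quantum groups monoidal equivalence need not preserve $\dim\mathcal{H}_U$. You should add one of the following:
\begin{enumerate}
\item finite quantum groups are of Kac type, since their Haar state is a trace, so $d=\dim$ in both $\Rep(G)$ and $\Rep(H)$; or
\item $U\mapsto\dim F(U)$ is a ring homomorphism $K_0(\Rep(G))\to\mathbb{R}$ that is positive on simples, hence equals the Frobenius--Perron dimension, which is $\dim\mathcal{H}_U$.
\end{enumerate}

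\emph{Cancellation.} With $A=C(\widehat{G})$, cancellation for $\widehat{\Delta}^\omega$ does not follow from a dimension count. The space $\widehat{\Delta}^\omega(A)(1\otimes A)=\omega^*\widehat{\Delta}(A)\,\omega(1\otimes A)$ is not a unitary conjugate of $\widehat{\Delta}(A)(1\otimes A)$, because $\omega$ need not commute with $1\otimes A$. Either write down the twisted antipode, or drop this check entirely: the reconstruction theorem you invoke already produces a finite quantum group whose dual is $(C(\widehat{G}),\widehat{\Delta}^\omega)$, so cancellation comes for free.
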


\begin{rem}
	It should be noted that $G^\omega$ is not necessarily cocommutative. That is to say, $G^\omega$ is not necessarily isomorphic to a classical finite group, despite the fact that it is isocategorical to $G$.
\end{rem}

Furthermore, Wassermann established a correspondence between 2-cocycles and ergodic actions on $C^*$-algebras.

\begin{thm}[Wassermann \cite{Wassermann1989}]
	There is a natural bijection between full multiplicity ergodic actions and \(H^2(\widehat{G},\T)\).
\end{thm}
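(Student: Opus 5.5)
The plan is to make the bijection explicit in both directions, using the identification (\ref{isom}) of $\mathbb{C}[G]$ with natural families $\prod^{\mathrm{Nat}}_{U\in\Rep(G)}B(\mathcal{H}_U)$. Let $\rho$ denote the right-translation action of $G$ on $C(G)$, extended linearly to $\mathbb{C}[G]$. Since $\rho$ commutes with left translation $\lambda$, every element of $\mathbb{C}[G]$ acts on $C(G)$ as a $\lambda$-equivariant operator. By the same token, every element of $\mathbb{C}[G]\otimes\mathbb{C}[G]$ acts on $C(G)\otimes C(G)\cong C(G\times G)$, equivariantly for the diagonal left action.

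\textbf{From cocycles to actions.} Given $\omega\in Z^2(\widehat{G},\T)$, I would define a new product on the vector space $C(G)$ by
\[
f\cdot_\omega g \coloneqq m\bigl((\rho\otimes\rho)(\omega)(f\otimes g)\bigr),
\]
and let $A_\omega$ be $C(G)$ with this product and the correspondingly twisted involution. The verification proceeds in four steps.
\begin{enumerate}[font=\upshape]
\item Associativity is exactly the cocycle identity (\ref{2-cocycle equation}), using that $m$ intertwines $\rho\otimes\rho$ with $\rho\circ\Delta$.
\item The left translations $\lambda_g$ commute with $\rho(\mathbb{C}[G])$, so they are automorphisms of $A_\omega$.
\item As a $G$-module $A_\omega$ is still the regular representation, so $\dim\Mor(\mathcal{H}_U,A_\omega)=\dim\mathcal{H}_U$, which is full multiplicity. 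The fixed points are the constants, which is ergodicity.
\item If $\omega_2=\Delta(\xi^*)\omega_1(\xi\otimes\xi)$ as in (\ref{equivalent cocycle relation}), then $f\mapsto\rho(\xi^*)f$ (up to the precise side convention) is an equivariant $*$-isomorphism $A_{\omega_1}\to A_{\omega_2}$. Hence the map descends to $H^2(\widehat{G},\T)$.
\end{enumerate}

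\textbf{From actions to cocycles.} Let $G\curvearrowright A$ be ergodic of full multiplicity. Full multiplicity says $A\cong C(G)$ as $G$-modules via the left regular representation. I would choose an equivariant linear isomorphism $\Phi\colon C(G)\to A$ with $\Phi(1)=1$; this is possible since by ergodicity the trivial isotypic part of $A$ is $\C1$. Transporting the product of $A$ gives a $\lambda$-equivariant bilinear map $C(G\times G)\to C(G)$. Composing with $m^{-1}$ on isotypic components, equivariant maps $C(G)\otimes C(G)\to C(G)\otimes C(G)$ are precisely natural families, i.e.\ elements of $\mathbb{C}[G]\otimes\mathbb{C}[G]$ by (\ref{isom}). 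So the product has the form above for a unique invertible $\omega$ in the relevant quotient, normalized so that $m$ factors correctly. Associativity then gives (\ref{2-cocycle equation}), and unitality gives the counit normalization. A different choice of $\Phi$ differs by an invertible equivariant map, namely $\rho(\xi)$ for some $\xi\in\mathbb{C}[G]$, which changes $\omega$ by a coboundary. The same argument shows that equivariantly isomorphic actions give equivalent cocycles, which yields injectivity. The two constructions are visibly inverse to each other.

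\textbf{Main obstacle.} The hard step is the $*$-structure: showing that $\omega$ can be taken \emph{unitary} rather than merely invertible, and that $A_\omega$ really is a $\Cstar$-algebra. I would first use the unique $G$-invariant state on $A$ (ergodicity makes it unique, hence a trace-like functional) to give $A$ a $G$-invariant inner product. I would then choose $\Phi$ to be unitary from $L^2(G)$ onto $L^2(A)$, which is possible since both are the regular representation with invariant inner products. Next I would check that compatibility of the product with $*$ and the inner product forces $\omega^*\omega$ to be a coboundary-type correction. Finally I would remove that correction by replacing $\xi$ with the positive part of its polar decomposition. Conversely, for unitary $\omega$, the algebra $A_\omega$ acts faithfully by left multiplication on $L^2(G)$ by unitaries twisted by $\omega$, which exhibits the $\Cstar$-norm.
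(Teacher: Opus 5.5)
The paper gives no proof of this statement; it is quoted from Wassermann, so there is no argument to compare against. Your skeleton is the standard one: twist the product of $C(G)$ by $(\rho\otimes\rho)(\omega)$, and conversely read $\omega$ off an equivariant identification $A\cong C(G)$. Its algebraic part is sound.

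One small correction: no quotient is involved. The map $\omega\mapsto m\circ(\rho\otimes\rho)(\omega)$ is a linear isomorphism from $\mathbb{C}[G]^{\otimes 2}$ onto the $G$-equivariant maps $C(G)^{\otimes 2}\to C(G)$ (it is injective by evaluating at $e$; then count dimensions). So $\omega$ is unique, and the threefold version of this is what turns associativity into (\ref{2-cocycle equation}).

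\textbf{The gap is the analytic input.} In the reverse direction you assert that $\omega$ is invertible. Equivariance, associativity, unitality, ergodicity and full multiplicity do not imply this. For example, $C_2$ acting on $\mathbb{C}[x]/(x^2)$ by $x\mapsto -x$ satisfies all of them, yet there $\omega$, viewed as a function on $\widehat{G}\times\widehat{G}$, vanishes at $(\chi,\chi)$ for the non-trivial character $\chi$. So invertibility and unitarity must come from the $\Cstar$-structure. Your sketch (``compatibility with $*$ forces $\omega^*\omega$ to be a coboundary-type correction'') names no mechanism. Moreover, once $\Phi$ is unitary there is nothing left for a polar decomposition to remove.

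The missing idea is as follows.
\begin{enumerate}
\item For $S,T\in\Mor(\mathcal{H}_U,A)$ and an orthonormal basis $(e_i)$, the element $\sum_i T(e_i)^*S(e_i)$ is $G$-invariant. By ergodicity it is a scalar $\langle S,T\rangle$, and by the $\Cstar$-property this is positive definite.
\item The identity $\sum_{i,j}(S'(e_i)T'(f_j))^*S(e_i)T(f_j)=\langle S,S'\rangle\langle T,T'\rangle$ shows that multiplication $\Mor(\mathcal{H}_U,A)\otimes\Mor(\mathcal{H}_V,A)\to\Mor(\mathcal{H}_U\otimes\mathcal{H}_V,A)$ is isometric.
\item Full multiplicity gives both sides dimension $\dim\mathcal{H}_U\dim\mathcal{H}_V$, so this multiplication map is unitary.
\item These inner products are computed in $L^2(A,\phi)$, with $\phi$ the invariant state. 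A unitary $\Phi$ therefore carries everything to $C(G)$, where the twisted multiplication map equals the untwisted one composed with $(\rho\otimes\rho)(\omega)$.
\item Hence $(\rho\otimes\rho)(\omega)$ is unitary on every $\Mor(\mathcal{H}_U,C(G))\otimes\Mor(\mathcal{H}_V,C(G))$. These exhaust the irreducible $*$-representations of $\mathbb{C}[G\times G]$, so $\omega$ is unitary outright.
\end{enumerate}

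\textbf{Polar decomposition is needed elsewhere.} Suppose you change $\Phi$ by an arbitrary invertible equivariant $\rho(\xi)$, or compare $A_{\omega_1}\cong A_{\omega_2}$ through an equivariant isomorphism. You then only get $\omega_2=\Delta(\xi)^{-1}\omega_1(\xi\otimes\xi)$ with $\xi$ invertible, whereas (\ref{equivalent cocycle relation}) demands a unitary. Unitarity of both cocycles gives $\Delta(c)=\omega_1(c\otimes c)\omega_1^*$ for $c=\xi\xi^*$, and hence the same identity for $c^{1/2}$. Then $u=c^{-1/2}\xi$ is a unitary with $\omega_2=\Delta(u^*)\omega_1(u\otimes u)$.

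\textbf{The forward direction also needs the $*$-structure.} The involution on $A_\omega$ is never specified. The phrase ``unitaries twisted by $\omega$ acting on $L^2(G)$'' describes the abelian case, where $C(G)$ is spanned by characters and $A_\omega$ is a twisted group algebra of $\widehat{G}$; it has no counterpart for non-abelian $G$. For the $\lambda_g$ to be $*$-automorphisms, the involution must have the form $f^\dagger=\rho(J)\bar f$ with $J\in\mathbb{C}[G]$. Anti-multiplicativity and involutivity then amount to $\Delta(J)\bar\omega=\omega_{21}(J\otimes J)$ and $J\bar J=1$, where bars denote coefficientwise conjugation and $\omega_{21}$ is the flip of $\omega$. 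You also need a proof that the Haar state is faithful and positive for $\dagger$. Only then is $A_\omega$ a $\Cstar$-algebra, and only then are your two constructions inverse as $*$-algebras rather than merely as algebras.
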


Summarizing these results, for the study of a finite group \(G_1\), the following are essentially equivalent:
\begin{itemize}
	\item
	a finite quantum group \(G_2\) which is isocategorical to \(G_1\);
	\item
	a 2-cocycle $\omega \in Z^2(\widehat{G_1}, \mathbb{T})$;
	\item
	an ergodic action $\alpha\colon G_1 \to \text{Aut}(A)$ of full multiplicity on a finite-dimensional $C^*$-algebra $A$.
\end{itemize}

Izumi and Kosaki characterized the conditions under which the quantum group $G^\omega$, twisted by a 2-cocycle, is cocommutative.
We only provide an outline of the proof.

\begin{thm}[Izumi and Kosaki \cite{IzumiKosaki2002}]
	Let \(G\) be a finite group and \(\omega\in Z^2(\widehat{G},\mathbb{T})\) 2-cocycle. 
	Then the following two conditions are equivalent.
	\begin{enumerate}[font=\upshape]\label{izumi-kosaki 1}
		\item
		\(G^\omega\) is cocommutative.
		\item\label{izumikosaki 1-2}
		There exist an abelian normal subgroup \(N\subset G\) and non-degenerate 2-cocycle \(\omega_1\in Z^2(\widehat{N},\mathbb{T})\) satisfying \(\omega\sim\omega_1\) in \(Z^2(\widehat{G},\mathbb{T})\) and \(\omega_1\) is \(G\)-invariant in \(Z^2(\widehat{N},\mathbb{T})\). 
	\end{enumerate}
\end{thm}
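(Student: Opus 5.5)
The plan is to go through the ergodic-action picture, using Wassermann's correspondence together with Lemma \ref{character of N}.

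\emph{(1)$\Rightarrow$(2).} Assume $G^\omega$ is cocommutative, so $G^\omega \cong \C[H]$ for some finite group $H$. By Wassermann's theorem, $\omega$ gives a full multiplicity ergodic action $G \curvearrowright A$ on a finite-dimensional $\Cstar$-algebra $A$. I would take $A$ to be the cocycle-twisted algebra $C(G)_\omega$, or equivalently the linking algebra between $\Rep(G)$ and $\Rep(G^\omega)$. The twisted coproduct on the other side gives a commuting action of $G^\omega$. Cocommutativity means this second action is an honest action of the group $H$, also ergodic and of full multiplicity.

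By Wassermann's reduction, $A \cong \mathrm{Ind}_N^G B(\C^n)$ and $A \cong \mathrm{Ind}_{N'}^H B(\C^n)$ with the same factor. Lemma \ref{character of N} then shows that $N$ is an abelian normal subgroup of $G$ with $|N|=n^2$, and that $N \curvearrowright M_n(\C)$ is $\mathrm{Ad}\, u$ for an irreducible projective representation $u$ of $N$. Since $M_n(\C)$ is spanned by the $u_x$, this projective representation has non-degenerate commutator form. By Fourier duality on the abelian group $N$, that commutator form is exactly $\beta_{\omega_1}$ for some $\omega_1 \in Z^2(\widehat N,\T)$, regarded as an element of $\C[N]\otimes\C[N] \subset \C[G]\otimes\C[G]$. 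So $\omega_1$ is non-degenerate.

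Two things then need checking.
\begin{itemize}
\item $\omega \sim \omega_1$ in $Z^2(\widehat G,\T)$. This holds because both cocycles give the same ergodic action: the induced action from $N$ on $\mathrm{Ind}_N^G M_n(\C)$ is the action attached to $\omega_1$ pushed into $\C[G]$. Wassermann's bijection then gives the equivalence.
\item $\omega_1$ is $G$-invariant. The $G$-action permutes the points of $\Omega$ and commutes with $H$. Using the normality argument of Lemma \ref{character of N}(2), conjugation by $g$ transports the $N$-action at $\varphi_0$ to the $N$-action at $S_g\varphi_0 = T_h\varphi_0$. The latter is isomorphic, via $T_h$, to the action at $\varphi_0$. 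Hence the projective representations $u$ and $u\circ\mathrm{Ad}\,g$ have the same commutator form, i.e. $\beta_{\omega_1^g}=\beta_{\omega_1}$. By the preceding Lemma on bicharacters, this is $G$-invariance.
\end{itemize}

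\emph{(2)$\Rightarrow$(1).} Since twisting by equivalent cocycles gives isomorphic quantum groups, we may replace $\omega$ by $\omega_1$. Choose a unitary projective representation $u$ of $N$ whose cocycle corresponds to $\omega_1$. Non-degeneracy makes it irreducible of dimension $\sqrt{|N|}$. $G$-invariance of $\beta_{\omega_1}$ gives, for each $g\in G$, a unitary $v_g$ and a character $\chi_g$ of $N$ with $v_g u_x v_g^* = \chi_g(x) u_{gxg^{-1}}$.

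I would then build the explicit group $H$ as in Etingof--Gelaki: $H$ is generated by $N$ and $G/N$ with the cocycle data modified by $\chi_g$. I would then check that $\widehat\Delta^{\omega_1}$ is conjugate to the cocommutative coproduct of $\C[H]$. Concretely, I would show that $\omega_1^*\widehat\Delta(\cdot)\omega_1$ is symmetric up to an inner automorphism. This uses the fact that $R:=\omega_{1,21}^*\omega_1$ is the bicharacter $\beta_{\omega_1}$ viewed in $\C[N]^{\otimes2}$. $G$-invariance makes $R$ commute with $\Delta(g)$ up to the coboundary correction.

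The main obstacle is this last verification: producing the group $H$ and the explicit isomorphism $(\C[G],\Delta^{\omega_1})\cong(\C[H],\Delta)$. My first attempt would be to define the group-like elements of $\C[G]$ for $\Delta^{\omega_1}$ directly as $\omega_1$-conjugates of $g\cdot\xi_g$, where $\xi_g\in\C[N]$ is a unitary solving $\omega_1^g=\partial\xi_g\,\omega_1$. I would then count that there are $|G|$ of them, which shows cocommutativity.
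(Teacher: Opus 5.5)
\textbf{(1)$\Rightarrow$(2).} The paper gives no proof of this theorem. It quotes it from Izumi--Kosaki, and the only supporting material in the text is Lemma~\ref{character of N} and Theorem~\ref{relation of Gw}. Your direction (1)$\Rightarrow$(2) is exactly the argument Lemma~\ref{character of N} is set up for. It is sound, modulo two standard facts: the commuting $G^\omega$-coaction on $C(G)_\omega$, and the compatibility of Wassermann's bijection with induction from $N$. Your invariance step is also correct: $T_h^{-1}\circ\alpha_g$ is an automorphism of the fibre $Ap_{\varphi_0}$ intertwining $\alpha_x$ with $\alpha_{gxg^{-1}}$. One bookkeeping change: define $\omega_1$ directly as the Wassermann class of the fibre action $N\curvearrowright Ap_{\varphi_0}$, rather than by an unspecified Fourier transport of the commutator form from $N$ to $\widehat N$. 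Then non-degeneracy (the fibre is a factor), $G$-invariance and $\omega\sim\omega_1$ all refer to the same cocycle.

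\textbf{(2)$\Rightarrow$(1).} Here the criterion you propose cannot work, and the step you call the main obstacle is actually immediate.

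\emph{The criterion is vacuous.} For every twist, $\Sigma\Delta^{\omega_1}(a)=\omega_{1,21}^*\Delta(a)\omega_{1,21}=R\,\Delta^{\omega_1}(a)R^*$ with $R=\omega_{1,21}^*\omega_1$. So being ``symmetric up to an inner automorphism'' is automatic and proves nothing. Cocommutativity means that $R$ \emph{commutes} with $\Delta^{\omega_1}(\C[G])$.

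\emph{The correct check is exact and short.} Since $R$ and $\omega_1$ both lie in the commutative algebra $\C[N]\otimes\C[N]$, cocommutativity is equivalent to $R$ commuting with $\Delta(g)=g\otimes g$ for all $g$. Under $\C[N]\cong C(\widehat N)$, conjugation by $g\otimes g$ sends $R=\beta_{\omega_1}$ to $(\sigma,\tau)\mapsto\beta_{\omega_1}(\sigma^g,\tau^g)=\beta_{\omega_1^g}(\sigma,\tau)$. Hence $G$-invariance gives \emph{exact} commutation, not commutation ``up to the coboundary correction'': the coboundary $\partial\xi_g$ is symmetric and drops out of $\beta$. That already finishes the proof. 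You need neither the explicit group $H$, nor $v_g$ and $\chi_g$, nor non-degeneracy for this direction.

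\emph{Your group-like fallback needs two corrections.} It works, and it is what Theorem~\ref{relation of Gw} records. But the group-likes are $\xi_g g$ themselves, not conjugates of them. Also, to get $|G|$ distinct ones you must normalize the choice, e.g.\ $\xi_{gn}=\xi_g$ for $n\in N$. The reason is that $\xi_g$ is only determined up to a factor in $N$ and $\omega_1^{ng}=\omega_1^{g}$. A careless choice such as $\xi_{ng}=\xi_g n^{-1}$ then gives $\xi_{ng}\,ng=\xi_g g$.
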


We assume that \(N\) and \(\omega=\omega_1\) satisfy the condition (\ref{izumikosaki 1-2}) in Theorem \ref{izumi-kosaki 1}. 
Since \(N\) is abelian, by the duality we can regard \(\omega\) as a 2-cocycle in \(Z^2(\widehat{N},\mathbb{T})\).

\begin{thm}[Izumi and Kosaki \cite{IzumiKosaki2002}]\label{relation of Gw}
	With these notations we have
	\begin{enumerate}[font=\upshape]
		\item
		There exist \(\xi_g\colon\widehat{N}\to\mathbb{T}\) for each \(g\in G\) and \(\eta\colon G\times G\to N\) such that for any pair \(\sigma\), \(\tau\in\widehat{N}\) we have 
		\[\omega(\sigma^g,\tau^g)=\partial\xi_g(\sigma,\tau)\omega(\sigma,\tau)\]
		\[\xi_{g_1}(\sigma)\xi_{g_2}(\sigma^{g_1})=\sigma(\eta(g_1,g_2))\xi_{g_1g_2}(\sigma).\]
		\item
		We may take \(\xi_g\) so that \(\xi_{gn}=\xi_g\), \(\xi_e=1\) hold for all \(n\in N\), \(g\in G\), and \(\eta\) is regard as \(\eta\colon 
		G/N\times G/N\to N\). 
		Then \(\{\xi_gg\}_{g\in G}\) is the set of group-like elements of \((\mathbb{C}[G],\Delta^\omega)\), and \(G_\omega\) is given by the extension 
		\[1\to N\to G^\omega\to G/N\to 1\]
		whose difference from the extention is given by \(\eta\). 
	\end{enumerate}
\end{thm}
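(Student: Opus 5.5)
Part (1) reduces to a cohomological argument. By $G$-invariance in the sense of the earlier lemma, for each $g\in G$ the cocycle $\omega^g$ is cohomologous to $\omega$ in $Z^2(\widehat{N},\T)$.

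\emph{Choice of $\xi_g$.} Using the explicit form of the equivalence relation on abelian duals, we pick for each $g$ a function $\xi_g\colon\widehat{N}\to\T$ with $\omega^g=\partial\xi_g\cdot\omega$. If $\omega$ is normalized, $\xi_g$ may be taken with $\xi_g(1)=1$. For $g=e$ we take $\xi_e=1$, and for $n\in N$ we note that $\sigma^n=\sigma$ because $N$ is abelian. So $\omega^n=\omega$, and we may take $\xi_n=1$. More generally, we choose $\xi_g$ on a transversal of $G/N$ and define $\xi_{gn}:=\xi_g$. This is consistent since $\sigma^{gn}=\sigma^{n g'}$ type identities reduce to $\sigma^{gn}=(\sigma^{g})^{n}$ up to ordering conventions, and $\sigma^n=\sigma$.

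\emph{Defining $\eta$.} Next we compare $\xi_{g_1}(\sigma)\xi_{g_2}(\sigma^{g_1})$ with $\xi_{g_1g_2}(\sigma)$. We compute $\omega^{g_1g_2}$ in two ways. Set $\zeta(\sigma):=\xi_{g_1}(\sigma)\xi_{g_2}(\sigma^{g_1})\overline{\xi_{g_1g_2}(\sigma)}$. Applying the defining relation twice (with the appropriate order of the $g_i$ dictated by $\sigma^{g_1g_2}=(\sigma^{g_1})^{g_2}$ or its reverse) shows that $\partial\zeta=1$. Hence $\zeta$ is a character of $\widehat{N}$, and by Pontryagin duality $\zeta(\sigma)=\sigma(\eta(g_1,g_2))$ for a unique $\eta(g_1,g_2)\in N$. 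With the choices made above, $\eta$ depends only on the cosets, so it descends to $G/N\times G/N$. This careful bookkeeping of the action convention is the only delicate point in part (1).

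For part (2), the main step is to verify that $u_g:=\xi_g g$ is group-like for $\Delta^\omega=\omega^*\Delta(\cdot)\omega$ in $\C[G]$. Here $\xi_g$ is viewed in $\C[N]\subset\C[G]$ via $C(\widehat N)\cong\C[N]$. We have $\Delta(\xi_g)=$ the function $(\sigma,\tau)\mapsto\xi_g(\sigma\tau)$, and $\Delta(g)=g\otimes g$. Conjugation by $g$ on $\C[N]\otimes\C[N]$ corresponds to $\omega\mapsto\omega^g$ (up to inverse). The identity $\omega^*\Delta(\xi_g)(g\otimes g)\omega=\xi_g g\otimes\xi_g g$ then becomes exactly $\omega^g=\partial\xi_g\,\omega$ after moving $g\otimes g$ past $\omega$. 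Thus each $\xi_g g$ is group-like.

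The products satisfy $u_{g_1}u_{g_2}=\xi_{g_1}(g_1\xi_{g_2}g_1^{-1})g_1g_2=\eta(g_1,g_2)u_{g_1g_2}$ by the second relation, where $\eta(g_1,g_2)\in N$ acts as a group-like element of $\C[N]$. The elements of $N$ remain group-like because $\omega\in\C[N]\otimes\C[N]$ commutes with $\Delta(n)=n\otimes n$. Hence the set $\{n\,u_g\}$ is closed under multiplication and has $|N|\cdot|G/N|=|G|$ elements. These elements are linearly independent, being nonzero multiples of distinct group elements $ng$. So they span $\C[G]$, and $(\C[G],\Delta^\omega)$ is a group algebra of this group $G^\omega$. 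In particular the group-likes are exactly these, since the group-likes of a group algebra are the group elements.

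The group $G^\omega$ contains $N$ as a normal subgroup. Conjugation by $u_g$ on $N$ agrees with that of $g$ because $\xi_g\in\C[N]$ commutes with $N$, and the quotient is $G/N$. The cocycle of this extension is $\eta$, whereas $G$ corresponds to the choice $u_g=g$. This is the stated description. I expect the group-like verification, with its convention-matching, to be the main obstacle.
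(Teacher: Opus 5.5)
The paper gives no proof of this theorem; it is quoted from Izumi--Kosaki, so there is no route to compare with. Your argument is the standard one and is correct, but a few places should be tightened.

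First, pin down the conventions instead of hedging, because the stated formulas depend on them. With $\sigma^g(n)=\sigma(gng^{-1})$ one has $\sigma^{g_1g_2}=(\sigma^{g_1})^{g_2}$. Using that $\sigma\mapsto\sigma^{g_1}$ is an automorphism of $\widehat N$, this gives
\[\omega^{g_1g_2}(\sigma,\tau)=\partial\xi_{g_2}(\sigma^{g_1},\tau^{g_1})\,\partial\xi_{g_1}(\sigma,\tau)\,\omega(\sigma,\tau).\]
That yields exactly $\partial\zeta=1$ for $\zeta(\sigma)=\xi_{g_1}(\sigma)\xi_{g_2}(\sigma^{g_1})\overline{\xi_{g_1g_2}(\sigma)}$. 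The opposite convention would give $\xi_{g_2}(\sigma)\xi_{g_1}(\sigma^{g_2})$ instead, which is not the statement.

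Likewise, under $n\mapsto(\sigma\mapsto\sigma(n))$ one has $gfg^{-1}=f\circ(\,\cdot\,)^g$ for $f\in C(\widehat N)$. Hence $(g\otimes g)\omega(g\otimes g)^{-1}=\omega^g$ with no inverse. So $\Delta^\omega(\xi_g g)=\omega^*\Delta(\xi_g)\,\omega^g\,(g\otimes g)$, and this equals $(\xi_g\otimes\xi_g)(g\otimes g)$ exactly when $\omega^g=\partial\xi_g\,\omega$. The same identity $g_1\xi_{g_2}g_1^{-1}=\xi_{g_2}\circ(\,\cdot\,)^{g_1}$ is what turns the second relation into $u_{g_1}u_{g_2}=\eta(g_1,g_2)u_{g_1g_2}$.

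Second, your reason for linear independence is incorrect as written. The element $nu_g=\xi_g\,ng$ is not a scalar multiple of $ng$, because $\xi_g\in\mathbb{C}[N]$ is not a scalar. Either of these fixes works:
\begin{enumerate}
\item For a fixed coset representative $g$, the family $\{\xi_g\,ng\}_{n\in N}$ is the image of the independent family $\{ng\}_{n\in N}$ under left multiplication by the unit $\xi_g$, and distinct cosets have disjoint supports.
\item Distinct group-like elements of a coalgebra are automatically linearly independent.
\end{enumerate}

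It is also worth recording that $nu_g=\xi_{ng}\,ng=u_{ng}$, since $\xi$ is constant on cosets and $\mathbb{C}[N]$ is commutative. So your set $\{nu_g\}$ is literally $\{\xi_g g\}_{g\in G}$, as in the statement. It is a group because it is a finite, multiplicatively closed set of unitaries.

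Finally, you never use non-degeneracy of $\omega$. That is fine: this construction needs only $G$-invariance.
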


\begin{defn}\label{Gw relations defn}
	With above notations let \(\tilde{g}\coloneqq\xi_gg\in \mathbb{C}[G]\) for \(g\in G\). 
	In particular the group product of \(G^\omega\) is given by 
	\[\tilde{g}\cdot\tilde{h}=\widetilde{\eta(g,h)gh}.\]
\end{defn}

Therefore, the problem of finding a group isocategorical to $G$ reduces to the study of 2-cocycles $\omega$ on certain normal abelian subgroups $N$.
The following results by Shimizu, Izumi, Kosaki, Goyvaerts, and Meir are fundamental to our research.

\begin{thm}[Shimizu \cite{Shimizu2010}]
	If two finite groups \(G_1\), \(G_2\) are isocategorical, then for each positive integer \(n\), the number of elements of order \(n\) in \(G_1\) is equal to the number of elements of order \(n\) in \(G_2\).
\end{thm}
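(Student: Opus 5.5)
Since $G_1$ and $G_2$ are isocategorical, the theorem of Etingof and Gelaki together with Theorem \ref{izumi-kosaki 1} and Theorem \ref{relation of Gw} lets me assume $G_2\cong G_1^\omega$. Here $\omega$ is a $G_1$-invariant non-degenerate 2-cocycle on $\widehat{N}$ for some abelian normal subgroup $N\trianglelefteq G_1$. I would then prove the statement by counting element orders through the explicit description of $G^\omega$ in Definition \ref{Gw relations defn}. The element $\tilde g=\xi_g g$ has multiplication $\tilde g\cdot\tilde h=\widetilde{\eta(g,h)gh}$. So $G^\omega$ and $G$ share the same underlying set $N$-cosets structure, and differ only by the $N$-valued correction $\eta$. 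The plan is to produce a bijection $G\to G^\omega$ that preserves element orders. It need not be a homomorphism.

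The direct computation would show $\tilde g^{\,k}=\widetilde{c_k(g)\,g^k}$ for an explicit $c_k(g)\in N$ built from the values $\eta(g^i,g)$. This is not obviously order preserving, so I would instead use a character-theoretic route. The number of elements of order dividing $n$ in a group $H$ is determined by the $n$-th Frobenius--Schur indicator data. More precisely, the count is $\#\{h: h^n=e\}=\sum_{U\in\Irr(H)}\nu_n(U)\dim\mathcal{H}_U$, where $\nu_n(U)=\frac1{|H|}\sum_h\chi_U(h^n)$ is the $n$-th Frobenius--Schur indicator. This holds because the function $h\mapsto\#\{x:x^n=h\}$ is a class function with these Fourier coefficients, and evaluating it at $e$ gives the count.

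The key step is that the higher Frobenius--Schur indicators $\nu_n(U)$ are invariants of the $\Cstar$-tensor category $\Rep(H)$. They can be defined intrinsically as the trace of a cyclic-rotation map on $\Mor(\mathbf 1,U^{\otimes n})$, built from the duality and associativity morphisms. Granting this, a tensor equivalence $\Rep(G_1)\simeq\Rep(G_2)$ matches irreducibles bijectively. It preserves dimensions, since these are intrinsic in a $\Cstar$-tensor category, and it preserves $\nu_n$. Hence $\#\{g\in G_1: g^n=e\}=\#\{g\in G_2: g^n=e\}$ for every $n$. Möbius inversion over the divisor lattice then gives equality of the number of elements of exact order $n$.

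The main obstacle is proving the categorical invariance of $\nu_n$. I would first check that the rotation map $E^{(n)}_U$, which sends $f\in\Mor(\mathbf 1,U^{\otimes n})$ to the morphism obtained by moving the first tensor factor to the end through the duality, is expressed purely in terms of tensor-categorical data. I would then check that it is transported by the monoidal functor, using its coherence isomorphisms. The remaining step is the identification $\operatorname{Tr}E^{(n)}_U=\nu_n(U)$ for the group case. This reduces to the computation that on $(\mathcal{H}_U^{\otimes n})^H$ the rotation is the cyclic permutation of factors. Its trace on invariants equals $\frac1{|H|}\sum_h\operatorname{Tr}(\text{cyc}\circ U_h^{\otimes n})=\frac1{|H|}\sum_h\chi_U(h^n)$ by the standard trace identity for cyclic permutations.
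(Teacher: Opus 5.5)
Your argument is correct and is essentially the proof behind the citation. The paper gives no proof of its own beyond citing Shimizu, and as far as I recall the argument there has the same mechanism as yours: $\#\{g: g^n=e\}=\sum_{U}\nu_n(U)\dim\mathcal{H}_U$ is the $n$-th Frobenius--Schur indicator of the regular object, and this indicator is invariant under tensor equivalence because it is defined intrinsically by the rotation on $\Mor(\mathbf 1,U^{\otimes n})$. You can drop the detour through $G^\omega$ in your first paragraph. The one point worth stating explicitly is that the rotation must be built from standard solutions of the conjugate equations, which give the canonical spherical structure and fix the normalization of $R$. These are what a unitary monoidal equivalence is guaranteed to preserve.
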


\begin{defn}
	Let $G$ be a finite group and $D^{(n)}$ denote the number of elements in the set $\{g \in G \mid \mathrm{ord}(g) = n\}$. 
	Let $d_1, d_2, \dots, d_m$ be the divisors of the order of $G$ such that $d_1 < d_2 < \dots < d_m$. 
	The \textit{order list} of $G$, denoted by $\mathrm{List}(G)$, is the ordered set:
	$$ 
	\mathrm{List}(G) \coloneqq \{D^{(d_1)}, D^{(d_2)}, \dots, D^{(d_m)}\}. 
	$$
	The order list is an invariant of isocategorical groups.
\end{defn}

For groups of order less than \(64\), a complete classification regarding isocategorical equivalence has been established. 
A finite group $G$ is said to be isocategorically \textit{rigid} if any group isocategorical to $G$ is necessarily isomorphic to $G$.

\begin{thm}[{Goyvaerts and Meir \cite[Proposition 6.2]{GoyvaertsMeir2014}}]
	All finite groups of order less than \(64\) are isocategorically rigid.
\end{thm}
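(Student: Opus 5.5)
The statement is the Goyvaerts--Meir theorem: every finite group $G$ with $|G|<64$ is isocategorically rigid. The plan is to reduce this to a finite check of possible twisting data and then exclude all of it.

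First, suppose $H$ is isocategorical to $G$. By the Etingof--Gelaki theorem, $H\cong G^\omega$ for some $\omega\in Z^2(\widehat{G},\T)$. Since $H$ is a group, $G^\omega$ is cocommutative. The Izumi--Kosaki theorem then gives an abelian normal subgroup $N\trianglelefteq G$ and a non-degenerate $G$-invariant $\omega_1\in Z^2(\widehat{N},\T)$ with $\omega\sim\omega_1$.

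Next, we constrain $N$. Non-degeneracy of $\beta_{\omega_1}$ forces $\widehat{N}\cong N$ to be of symmetric type $A\times A$, so $|N|$ is a square. Lemma~\ref{character of N} adds that $N$ is non-cyclic whenever $|N|\ge 2$. If $N$ is trivial, then $G^\omega\cong G$ and there is nothing to prove.

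The key step is to show that $G^\omega\cong G$ whenever $|G|<64$. We use Theorem~\ref{relation of Gw}: $G^\omega$ is again an extension of $G/N$ by $N$, with the same action of $G/N$ on $N$, and its extension class differs from that of $G$ by the class $[\eta]\in H^2(G/N,N)$. This $\eta$ arises as the obstruction of lifting the action $g\mapsto\omega^g\omega^{-1}=\partial\xi_g$ to a genuine action. That obstruction is trivial when the relevant cohomology vanishes.

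The vanishing can be arranged in three situations:
\begin{itemize}
\item $\gcd(|N|,[G:N])=1$: then $H^2(G/N,N)=0$ by Schur--Zassenhaus.
\item $G/N$ is cyclic: then $\eta$ can be normalised away by re-choosing the $\xi_g$ on a generator.
\item $|N|=4$ with $N\cong(\mathbb{Z}/2)^2$ and $\omega_1$ the unique non-degenerate class: here an explicit choice of $\xi_g$ is equivariant, because $\mathrm{Aut}(N)=S_3$ fixes the unique symmetric quadratic refinement only up to a coboundary that can be made canonical.
\end{itemize}

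Since $|G|<64$ and $|N|$ is a non-cyclic square, the possibilities are:
\begin{itemize}
\item $N\cong(\mathbb{Z}/2)^2$ or $(\mathbb{Z}/3)^2$;
\item $N\cong(\mathbb{Z}/2)^4$, $(\mathbb{Z}/4)^2$, $(\mathbb{Z}/5)^2$ or $(\mathbb{Z}/6)^2$, with $[G:N]\le 3$;
\item $N\cong(\mathbb{Z}/7)^2$, forcing $G=N$.
\end{itemize}
For $N$ of order $16$ we have $[G:N]\le 3$, so either $G/N$ is cyclic or the orders are coprime, and the criteria above apply. For $N\cong(\mathbb{Z}/3)^2$, $[G:N]<7$, so $G/N$ is cyclic or $S_3$. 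The case $G/N\cong S_3$ is handled by restricting to the Sylow $3$-part and using the coprime criterion on the $2$-part.

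The main obstacle is the small-$N$ case with non-cyclic, non-coprime quotient, namely $N=(\mathbb{Z}/2)^2$ with $G/N$ a non-cyclic $2$-group of order up to $8$. For this case I would first try to show directly that the Etingof--Gelaki construction yields $G^\omega\cong G$. Failing that, I would fall back on a finite computer check in GAP over the relevant groups, listing each pair $(N,\omega_1)$ and comparing the twisted extension with $G$ via the order-list invariant of Shimizu and then an explicit isomorphism test.
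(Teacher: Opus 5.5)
The paper gives no proof of this statement; it is quoted from Goyvaerts--Meir, so your outline has to stand on its own. The reduction itself is sound. By Theorem~\ref{relation of Gw}, $G^\omega$ is an extension of $G/N$ by $N$ with the same action, and it differs from $G$ by $[\eta]\in H^2(G/N,N)$, so $[\eta]=0$ suffices. Moreover, $\xi_g$ can be chosen to depend only on the image of $g$ in $\mathrm{Sp}(\widehat N,\beta_\omega)$, so $[\eta]$ is the pullback of one universal class on that group.

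\textbf{The gap is in your cyclic criterion.} Suppose $G/N=\langle\bar g\rangle$ has order $m$ and you define $\xi_{g^k}$ recursively from $\xi_g$. What remains is the $g$-fixed character $n_0(\sigma)=\prod_{k=0}^{m-1}\xi_g(\sigma^{g^k})$, i.e.\ an element of $N^{G/N}$. Replacing $\xi_g$ by $\xi_g\chi$ only multiplies $n_0$ by the norm of $\chi$. So $[\eta]$ is the class of $n_0$ in $N^{G/N}/\mathrm{Norm}(N)\cong H^2(C_m,N)$, which is in general nonzero. Re-choosing $\xi_g$ kills it only if $n_0$ is already a norm, and you give no reason why it should be.

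This is exactly the case you need for $|N|=16$, where the only non-coprime option is $G/N\cong C_2$, $|G|=32$. Your $(\mathbb{Z}/3)^2$, $S_3$ reduction also lands on it. The claim does hold in the required cases, but it must be proved:
\begin{itemize}
\item For $N\cong C_2^{\times 4}$ and a nontrivial involution, $N$ is a free $\mathbb{F}_2[C_2]$-module unless the involution is a symplectic transvection. A transvection preserves an orthogonal splitting of $\widehat N$ into two hyperbolic planes and acts trivially on one of them, which reduces to the $C_2^{\times 2}$ case.
\item For $N\cong C_4^{\times 2}$, Cayley--Hamilton shows every involution of $SL_2(\mathbb{Z}/4)$ is $\equiv I \pmod 2$. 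Hence it is conjugate to $-I$, $h_1$ or $-h_1$ (notation of the Izumi--Kosaki example), and for these one checks directly that $n_0$ is a norm.
\end{itemize}

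\textbf{Your treatment of $N\cong C_2^{\times 2}$ is inconsistent.} The third criterion asserts $[\eta]=0$ for every $G$, with a justification too vague to check. Yet you then call $N\cong C_2^{\times 2}$ with a non-cyclic $2$-group quotient the main obstacle and defer it to a GAP search. That residual list is also incomplete: $[G:N]\le 15$ allows quotients such as $S_3$, $D_{10}$, $A_4$, $D_{12}$, which are neither cyclic nor of order prime to $4$.

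The clean argument is the universal one. Here $\mathrm{Sp}(\widehat N,\beta)=SL_2(\mathbb{F}_2)\cong S_3$, and $H^2(S_3,\mathbb{F}_2^2)=0$. This holds because restriction to a Sylow $2$-subgroup is injective, and there $\mathbb{F}_2^2$ is a free $\mathbb{F}_2[C_2]$-module. Hence $[\eta]=0$ for all $G$, with no computation.

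Likewise, for $|N|$ odd, drop the Sylow and cyclic reductions. Take $\omega=\beta_\omega^{\,k}$ with $2k\equiv 1$ modulo the exponent of $N$. This bicharacter is fixed exactly by every $\beta_\omega$-preserving automorphism, so $\xi_g\equiv 1$ and $\eta\equiv 1$. Your list of quotients for $(\mathbb{Z}/3)^2$ also omits $C_2\times C_2$, though that case is coprime.

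With these ingredients plus the coprime case, your outline becomes a complete proof. As written, the decisive steps are asserted rather than proved.
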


However, there exist groups of order $64$ that are not isocategorically rigid. 
We will discuss the detailed construction of such cases in Section 4.

\begin{thm}[{Izumi and Kosaki \cite[Theorem 4.6]{IzumiKosaki2002}}]
	For a certain group $G$ of order \(64\), there exists a 2-cocycle $\omega \in Z^2(\widehat{G}, \mathbb{T})$ such that $G$ and $G^\omega$ are non-isomorphic.
\end{thm}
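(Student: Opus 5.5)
The statement is existential, so I will prove it by exhibiting an explicit pair $(G,\omega)$ and checking that $G^\omega\not\cong G$. Everything will be done through Theorem~\ref{relation of Gw} and Definition~\ref{Gw relations defn}, after a choice of abelian normal subgroup carrying a non-degenerate $G$-invariant cocycle.

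\textbf{Choice of $N$ and $\omega$.} Lemma~\ref{character of N} forces $|N|=n^2$ with $N$ abelian and non-cyclic. Since non-trivial examples are impossible below order $64$, I take $N\cong\mathbb{Z}/4\times\mathbb{Z}/4$ (order $16$), following Izumi--Kosaki. Then $G/N$ has order $4$, and I take $G/N\cong\mathbb{Z}/2\times\mathbb{Z}/2$, acting on $\widehat{N}\cong(\mathbb{Z}/4)^2$ through matrices in $\mathrm{SL}_2(\mathbb{Z}/4)$.

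Skew-symmetric bicharacters on $(\mathbb{Z}/4)^2$ have the form $\beta(\sigma,\tau)=i^{\det(\sigma,\tau)}$ up to a power. This one is non-degenerate, and it is invariant under any automorphism of determinant $1$. By the Lemma on $G$-invariance via $\beta_\omega$, any $\omega$ with $\beta_\omega=\beta$ is a non-degenerate $G$-invariant cocycle. Concretely I use $\omega(\sigma,\tau)=i^{\sigma_1\tau_2}$.

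\textbf{Computing $\xi_g$ and $\eta$.} For a set of coset representatives $g$ of $G/N$, I solve
\[
\omega(\sigma^g,\tau^g)=\partial\xi_g(\sigma,\tau)\,\omega(\sigma,\tau).
\]
Because $\omega^g\overline{\omega}$ is a symmetric cocycle, $\xi_g$ can be taken to be a quadratic-type function $\xi_g(\sigma)=i^{q_g(\sigma)}$. I then read off $\eta(g_1,g_2)\in N$ from
\[
\xi_{g_1}(\sigma)\,\xi_{g_2}(\sigma^{g_1})=\sigma(\eta(g_1,g_2))\,\xi_{g_1g_2}(\sigma).
\]
By Definition~\ref{Gw relations defn}, this gives a presentation of $G^\omega$ as an extension of $G/N$ by $N$: the same action as in $G$, with the extension class multiplied by $\eta$.

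\textbf{Proving $G^\omega\not\cong G$.} This is the main obstacle, because Shimizu's theorem says the order lists coincide, so the cheapest invariant fails. I would first try simple invariants: the centre, the abelianization, the isomorphism type of $\Phi(G)$, and the number of abelian normal subgroups of each type. If all of these agree, I would count the elements of order $2$ or $4$ that lie in specified characteristic subgroups, or count the normal subgroups $M$ with $G/M\cong D_8$; such subtle data is what separates the Izumi--Kosaki pair.

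A further danger is that the chosen $G$ has $\eta$ cohomologically trivial, in which case $G^\omega\cong G$. So $G$ must be chosen so that the squares of lifts of the $G/N$ generators interact non-trivially with the quadratic forms $q_g$. Concretely, I would take $g$ acting on $\widehat{N}$ by $\begin{pmatrix}0&1\\-1&0\end{pmatrix}$ (or by a transvection), and choose the lift so that $g^2\in N$ is a specific element. The $\eta$ correction then changes that element, for instance turning a lift of order $2$ into one of order $4$ within a particular coset. Finally, I would verify directly, or by a GAP \texttt{IdGroup} computation as the paper does, that the two resulting groups have different identifiers.
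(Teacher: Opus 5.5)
Your outline (take $N\cong C_4\times C_4$ and $\omega(\sigma,\tau)=i^{\sigma_1\tau_2}$, solve for $\xi_g$ and $\eta$, then compare $G^\omega$ with $G$) follows the same route as the paper. However, the proposal never produces a witness, so the existential statement is not yet proved.

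First, $G$ is never fixed. Saying ``$G/N\cong C_2\times C_2$ acting through $\mathrm{SL}_2(\mathbb{Z}/4)$'' leaves both the action and the extension class open, and the outcome depends on both. For example, with trivial action one may take every $\xi_g=1$, so $\eta$ is trivial and $G^\omega\cong G$. Your concrete suggestion is also inconsistent with your own setup. Since $N$ is abelian, conjugation on $N$ factors through $G/N\cong C_2\times C_2$, so every $g$ must act by an automorphism of order at most $2$. But $\left(\begin{smallmatrix}0&1\\-1&0\end{smallmatrix}\right)$ has order $4$ in $GL_2(\mathbb{Z}/4)$, and so do the transvections $\left(\begin{smallmatrix}1&\pm1\\0&1\end{smallmatrix}\right)$; only transvections of the form $I+2E_{ij}$ are involutions. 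The choice that works, which the paper takes from Izumi--Kosaki, is the split extension $G=N\rtimes H$ with $H$ generated by $h_1=\left(\begin{smallmatrix}1&2\\0&1\end{smallmatrix}\right)$ and $h_2=\left(\begin{smallmatrix}1&0\\2&1\end{smallmatrix}\right)$. For this $G$ the twist leaves the conjugation action on $N$ unchanged and alters the squares of the lifts, giving $\tilde s^2=\tilde x^2$ and $\tilde t^2=\tilde y^2$.

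Second, and decisively, you leave the non-isomorphism $G^\omega\not\cong G$, which is the entire content of the theorem, as a list of invariants you ``would try''. A non-trivial $\eta$ does not by itself give non-isomorphism. An abstract isomorphism need not map $N$ to $N$, nor induce the identity on $N$ and $G/N$, so a changed extension class can still yield the same group. Nor does it help that a particular lift changes order. For the pair above, each coset of $N$ contains the same number of involutions in $G$ as in $G^{\omega}$ (namely $3,4,4,8$). So even this refined count agrees, consistent with the common order list $[1,19,44,0,0,0,0]$. To finish, you must either name an invariant and actually compute it on both explicit presentations, or do what the paper does: write down the presentation of $G^{\omega_1}$ and verify non-isomorphism by machine, where \texttt{IsomorphismGroups(G,Gw)} returns \texttt{fail}.
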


\section{The case of order \(64\) groups}
Throughout this section, let $G$ be a group of order $64$. 
We denote the cyclic group of order $k \in \mathbb{N}$ by $C_k$. 
For a direct product group $C = C_{k_1} \times \cdots \times C_{k_n}$, an element $c \in C$ is represented as a vector $(c_1, \dots, c_n)$ or its transpose $(c_1, \dots, c_n)^t$, where each $c_i$ is an integer satisfying $0 \leq c_i < k_i$. 
Similarly, an element of the Pontryagin dual $\widehat{C}$ is denoted by $\sigma = \widehat{(\sigma_1, \dots, \sigma_n)}$ or $\sigma = \widehat{(\sigma_1, \dots, \sigma_n)^t}$.

\begin{thm}\label{candidate of N}
	Let $N$ be a non-trivial abelian normal subgroup of $G$ whose order is a square.
	Suppose that $N$ admits a non-degenerate 2-cocycle $\omega \in Z^2(\widehat{N}, \mathbb{T})$. 
	Then $N$ is isomorphic to one of the following groups:
	\[C_2\times C_2,\  C_4\times C_4,\  C_2\times C_2\times C_2\times C_2.\]
\end{thm}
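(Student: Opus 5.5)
Since $|N|$ is a nontrivial square dividing $64$, we have $|N|\in\{4,16,64\}$. Moreover $N$ is an abelian $2$-group, so $N\cong\widehat{N}$ is a product of cyclic $2$-groups. The plan is to list the abelian groups of each admissible order and discard those carrying no non-degenerate skew-symmetric bicharacter. By the lemma on cohomology classes, $\omega$ is non-degenerate exactly when $\beta_\omega$ is a non-degenerate skew-symmetric bicharacter on $\widehat{N}$. So the question reduces to which finite abelian groups $A\cong\widehat{N}$ admit a symplectic (non-degenerate alternating) bicharacter $A\times A\to\T$.

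The key structural step is the classical fact that such a form forces $A\cong B\times B$ for some abelian $B$. I would prove it by induction via a symplectic basis. Choose $a\in A$ of maximal order $m$. Non-degeneracy gives an isomorphism $A\to\widehat{A}$, $x\mapsto\beta(x,\cdot)$, so $\beta(a,\cdot)$ has order $m$ and there is $b$ with $\beta(a,b)$ a primitive $m$-th root of unity. Alternation gives $\beta(a,a)=\beta(b,b)=1$, hence $a\neq b$, and $b$ also has order $m$ since $m$ is the exponent. Put $H=\langle a,b\rangle$. The restriction of $\beta$ to $H$ is non-degenerate, so $H\cong C_m\times C_m$ and $A=H\oplus H^{\perp}$, where $H^{\perp}$ inherits a non-degenerate alternating form. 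Iterating shows $A\cong\prod_i (C_{m_i}\times C_{m_i})$, i.e.\ every invariant factor occurs with even multiplicity. Conversely, each $C_m\times C_m$ carries the standard form $\beta((x_1,x_2),(y_1,y_2))=\zeta_m^{x_1y_2-x_2y_1}$. The statement only needs the necessity direction, but this shows the listed groups really occur.

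Now enumerate. Order $4$: $C_4$ is cyclic (excluded also by Lemma \ref{character of N}(4)); only $C_2\times C_2$ remains. Order $16$: among $C_{16}$, $C_8\times C_2$, $C_4\times C_4$, $C_4\times C_2\times C_2$ and $C_2^4$, only $C_4\times C_4$ and $C_2^4$ have paired invariant factors. Order $64$: the only pairable types are $C_8\times C_8$, $C_4\times C_4\times C_2\times C_2$ and $C_2^6$, so we must show none of these is a normal subgroup of $G$ with the required properties. The only subgroup of order $64$ is $N=G$ itself, so $G$ would have to be abelian and isomorphic to one of these three groups. This case is the main obstacle, because the classification statement says those groups are excluded, and the algebraic pairing argument alone does not rule them out.

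The exclusion has to come from the setting. The theorem is used where $N$ arises as in Theorem \ref{izumi-kosaki 1}, giving $G^\omega$ with $G^\omega/N\cong G/N$, and I read the statement as implicitly requiring $N$ to be proper (a nontrivial twist of the quotient). If $N=G$ is abelian, then $G^\omega$ is an extension of $G/N=1$ by $N$, so $G^\omega\cong N=G$ and the twist gives nothing new. I would therefore state explicitly that $N$ is a proper normal subgroup, which forces $|N|\le 32$ and hence $|N|\in\{4,16\}$; the enumeration above then yields exactly the three listed groups. Verifying that this reading matches the author's intended hypothesis is the point I am least sure of.
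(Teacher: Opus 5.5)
Your proof is correct, and it takes a genuinely different route from the paper.

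The paper never proves the structural fact that a non-degenerate alternating bicharacter forces $\widehat{N}\cong B\times B$. Instead it works case by case:
\begin{enumerate}
\item It discards cyclic $N$ by citing Lemma~\ref{character of N}(4), which is phrased for ergodic actions of full multiplicity rather than for the bare hypothesis on $\omega$.
\item It lists the remaining abelian candidates of orders $4$ and $16$.
\item It eliminates $C_4\times C_2\times C_2$ by a direct computation showing that $\widehat{(2,0,0)}$ lies in the radical of $\beta_\omega$. This works because $\beta_\omega(\widehat{(2,0,0)},\tau)=\beta_\omega(\widehat{(1,0,0)},2\tau)$ and $2\tau$ is a multiple of $\widehat{(1,0,0)}$.
\end{enumerate}

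Your symplectic-basis induction handles cyclic groups and every unpaired type uniformly and is self-contained. It also gives the converse: the three listed groups actually carry non-degenerate cocycles. The paper's trick is quicker per case, but it is only as good as its case list. In fact the paper's list of non-cyclic abelian groups of order $16$ omits $C_8\times C_2$. Your criterion excludes it automatically, and the same trick with $\widehat{(2,0)}$ would also dispose of it.

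On the point you flagged: the paper's proof likewise restricts at the outset to \emph{proper} non-trivial subgroups, so only orders $4$ and $16$ are considered. As you noticed, the statement read literally fails for an abelian $G=N$ of type $C_8\times C_8$, $C_4\times C_4\times C_2\times C_2$ or $C_2^{\times 6}$. Making properness explicit is therefore exactly the intended hypothesis, and your remark that an abelian $N=G$ only returns $G^\omega\cong G$ justifies it.
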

\begin{proof}
	Since $|G| = 64$ and $|N|$ is a square, the possible orders for a proper non-trivial subgroup $N$ are $|N| = 4$ or $16$.
	Given that $N$ is abelian and non-cyclic (by Lemma \ref{character of N}), the candidates for $N$ are:
	$$ 
	C_2 \times C_2, \quad C_4 \times C_4, \quad C_2 \times C_2 \times C_2 \times C_2, \quad \text{and} \quad C_4 \times C_2 \times C_2. 
	$$
	We shall show that $C_4 \times C_2 \times C_2$ is excluded due to the requirement of non-degeneracy.
	Let $\beta_\omega\colon \widehat{N} \times \widehat{N} \to \mathbb{T}$ be the skew-symmetric bicharacter associated with the 2-cocycle $\omega$. 
	If $N \cong C_4 \times C_2 \times C_2$, consider the element $\sigma_0 = \widehat{(2,0,0)} \in \widehat{N}$. 
	For any $\tau = \widehat{(\tau_1, \tau_2, \tau_3)} \in \widehat{N}$, we have:
	\begin{align*}
	\beta_\omega(\widehat{(2,0,0)}, \tau) &= \beta_\omega(\widehat{(1,0,0)}, \tau)^2 \\
	&= \beta_\omega(\widehat{(1,0,0)}, 2\tau) \\
	&= \beta_\omega(\widehat{(1,0,0)}, \widehat{(2\tau_1, 2\tau_2, 2\tau_3)}).
	\end{align*}
	Since the orders of the second and third components of $\widehat{N}$ are 2, we have $2\tau_2 \equiv 0 \pmod 2$ and $2\tau_3 \equiv 0 \pmod 2$. 
	Furthermore, $2\tau_1$ is either $0$ or $2 \pmod 4$. 
	In any case, $2\tau$ represents an element in $\widehat{N}$ where each component is either $0$ or $2$. Specifically, for $\tau = \widehat{(0,1,0)}$, we find:
	$$ 
	\beta_\omega(\widehat{(2,0,0)}, \widehat{(0,1,0)}) = \beta_\omega(\widehat{(1,0,0)}, \widehat{(0,2,0)}) = \beta_\omega(\widehat{(1,0,0)}, \widehat{(0,0,0)}) = 1. 
	$$
	By a similar argument, it follows that $\beta_\omega(\widehat{(2,0,0)}, \sigma) = 1$ for all $\sigma \in \widehat{N}$. 
	This contradicts the assumption that $\omega$ is non-degenerate. 
	Therefore, $N$ cannot be isomorphic to $C_4 \times C_2 \times C_2$.
\end{proof}

The $n$-fold direct product of $C_k$ is denoted by $C_k^{\times n}$. 
The second cohomology groups of finite abelian groups are well-known (see, e.g., \cite{brown2012cohomology}). 
Let $H^2(\widehat{N}, \mathbb{T})$ denote the second cohomology group of $N$, and set $\zeta_n \coloneqq e^{2\pi i / n}$.

\begin{lem}
	A straightforward calculation yields the following results:
	\begin{enumerate}
		\item
		\(H^2(\widehat{C^{\times 2}_2},\mathbb{T})\simeq C_2\).
		The representatives of \(Z^2(\widehat{C^{\times 2}_2},\mathbb{T})\) are given by
		\[\omega(\sigma,\tau)=\zeta_2^{k\sigma_1\tau_2},\]
		where \(k=0,1\) and \(\sigma=\widehat{(\sigma_1,\sigma_2)}, \tau=
		\widehat{(\tau_1,\tau_2)}\in \widehat{C^{\times 2}_2}\).
		
		\item
		\(H^2(\widehat{C^{\times 2}_4},\mathbb{T})\simeq C_4\).
		The representatives of \(Z^2(\widehat{C^{\times 2}_4},\mathbb{T})\) are given by
		\[\omega(\sigma,\tau)=\zeta_4^{k\sigma_1\tau_2},\]
		where \(k=0,1,2,3\) and \(\sigma=\widehat{(\sigma_1,\sigma_2)}, \tau=\widehat{(\tau_1,\tau_2)}\in \widehat{C^{\times 2}_4}\).
		
		\item
		\(H^2(\widehat{C^{\times 4}_2},\mathbb{T})\simeq C^{\times 6}_2\).
		The representatives of \(Z^2(\widehat{C^{\times 4}_2},\mathbb{T})\) are given by
		\[\omega(\sigma,\tau)=\prod_{1\leq i< j \leq4}\lambda^{\sigma_i\tau_j}_{ij},\]
		where $\lambda_{ij} \in \{1, -1\}$ and \(\sigma=\widehat{(\sigma_1,\sigma_2,\sigma_3,\sigma_4)}, \tau=\widehat{(\tau_1,\tau_2,\tau_3,\tau_4)}\in \widehat{C^{\times 4}_2}\).
	\end{enumerate}
\end{lem}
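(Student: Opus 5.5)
The plan rests on the isomorphism $H^2(\widehat{N},\mathbb{T})\to\{\text{skew-symmetric bicharacters on }\widehat{N}\}$, $\omega\mapsto\beta_\omega$, recalled before the first lemma of this section. It turns the computation of $H^2$ into a count of alternating bicharacters. Since $\widehat{N}\cong N$ non-canonically, we write $\widehat{N}=C_{k_1}\times\cdots\times C_{k_r}$ with standard generators $e_1,\dots,e_r$.

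First, a skew-symmetric bicharacter $\beta$ is determined by the values $\beta(e_i,e_j)$ for $i<j$. Bimultiplicativity together with $\beta(\sigma,\sigma)=1$ (which gives $\beta(e_i,e_i)=1$) and $\beta(e_j,e_i)=\overline{\beta(e_i,e_j)}$ fixes all other values. Each $\beta(e_i,e_j)$ must be a $\gcd(k_i,k_j)$-th root of unity, because $\beta(e_i,e_j)^{k_i}=\beta(k_ie_i,e_j)=1$ and similarly for $k_j$. Conversely, any choice of such roots of unity defines a skew-symmetric bicharacter $\beta(\sigma,\tau)=\prod_{i<j}\beta(e_i,e_j)^{\sigma_i\tau_j-\sigma_j\tau_i}$. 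This is well defined on $\widehat{N}$ precisely because of the gcd condition, and $\beta(\sigma,\sigma)=1$ is immediate. Hence $H^2$ is isomorphic to $\prod_{i<j}C_{\gcd(k_i,k_j)}$, which gives $C_2$, $C_4$ and $C_2^{\times 6}$ in the three cases.

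Second, we verify that the listed functions are 2-cocycles and hit every class. Each $\omega(\sigma,\tau)=\prod_{i<j}\lambda_{ij}^{\sigma_i\tau_j}$ is a bicharacter in $(\sigma,\tau)$. Here we should check that it is well defined modulo $k_i$ and $k_j$; this holds because $\lambda_{ij}$ has order dividing both. A bicharacter automatically satisfies the cocycle identity $\omega(\rho,\sigma)\omega(\rho\sigma,\tau)=\omega(\sigma,\tau)\omega(\rho,\sigma\tau)$, since both sides equal $\omega(\rho,\sigma)\omega(\rho,\tau)\omega(\sigma,\tau)$. Its associated bicharacter is
\[
\beta_\omega(\sigma,\tau)=\prod_{i<j}\lambda_{ij}^{\sigma_i\tau_j-\tau_i\sigma_j},
\]
so $\beta_\omega(e_i,e_j)=\lambda_{ij}$. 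The map $(\lambda_{ij})\mapsto[\omega]$ is therefore a bijection onto $H^2$, and by the first lemma of this section distinct parameters give inequivalent cocycles. Specialising gives the three cases: $\zeta_2^{k\sigma_1\tau_2}$ with $k=0,1$; $\zeta_4^{k\sigma_1\tau_2}$ with $k=0,\dots,3$; and $\lambda_{ij}\in\{\pm1\}$ for the six pairs $i<j$.

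The main point needing care is the first step, namely that every skew-symmetric bicharacter arises from the generator values, with exactly the gcd constraint. We cite the stated isomorphism $\omega\mapsto\beta_\omega$ rather than reprove it. The rest is a routine calculation.
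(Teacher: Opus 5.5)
Your proposal is correct and takes the approach the paper has in mind. The paper gives no argument beyond calling this a straightforward calculation and pointing to standard cohomology references, and your reduction via the isomorphism $\omega\mapsto\beta_\omega$ recalled in the preliminaries is exactly that calculation: alternating bicharacters are determined by the values $\beta(e_i,e_j)$, each a $\gcd(k_i,k_j)$-th root of unity, and the listed bimultiplicative $\omega$ satisfy $\beta_\omega(e_i,e_j)=\lambda_{ij}$, so each class is hit exactly once.
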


For each abelian group $N$ listed in Theorem \ref{candidate of N}, we characterize the conditions under which $\omega \in Z^2(\widehat{N}, \mathbb{T})$ is non-degenerate and $G$-invariant.

\begin{lem}\label{non-deg C2}
	The unique non-degenerate 2-cocycle class in $H^2(\widehat{C_2^{\times 2}}, \mathbb{T})$ is represented by $\omega_1$, which is defined for $\sigma, \tau \in \widehat{C_2^{\times 2}}$ as
	$$ 
	\omega_1(\sigma, \tau) \coloneqq \zeta_2^{\sigma_1 \tau_2}. 
	$$
\end{lem}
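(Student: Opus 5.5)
We will combine the classification of $H^2(\widehat{C_2^{\times 2}},\mathbb{T})$ from the preceding lemma with the bicharacter criterion for equivalence of cocycles. By that lemma, every class in $H^2(\widehat{C_2^{\times 2}},\mathbb{T})\simeq C_2$ is represented by $\omega(\sigma,\tau)=\zeta_2^{k\sigma_1\tau_2}$ with $k\in\{0,1\}$. Since cohomologous cocycles have the same bicharacter $\beta_\omega$, and non-degeneracy is defined purely through $\beta_\omega$, non-degeneracy is a property of the class. It therefore suffices to decide, for $k=0$ and $k=1$, whether $\beta_\omega$ is non-degenerate.

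First we compute $\beta_\omega$ for the representative with parameter $k$:
\[
\beta_\omega(\sigma,\tau)=\omega(\sigma,\tau)\overline{\omega(\tau,\sigma)}=\zeta_2^{k(\sigma_1\tau_2-\tau_1\sigma_2)}=(-1)^{k(\sigma_1\tau_2+\sigma_2\tau_1)}.
\]
For $k=0$ this bicharacter is identically $1$. Then $\sigma=\widehat{(1,0)}\neq e$ pairs trivially with every $\tau$, so the trivial class is degenerate.

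For $k=1$ we check the radical. Suppose $\beta_{\omega_1}(\sigma,\tau)=1$ for all $\tau$.
\begin{itemize}
\item Taking $\tau=\widehat{(0,1)}$ gives $(-1)^{\sigma_1}=1$, hence $\sigma_1=0$.
\item Taking $\tau=\widehat{(1,0)}$ gives $(-1)^{\sigma_2}=1$, hence $\sigma_2=0$.
\end{itemize}
Thus $\sigma$ is the identity, and $\omega_1$ is non-degenerate.

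Since $H^2$ has exactly two elements and only the class of $\omega_1$ is non-degenerate, that class is the unique one. The argument has no real obstacle. The one point needing care is that non-degeneracy must be well defined on classes, and this follows from the lemma stating that $\omega_1\sim\omega_2$ if and only if $\beta_{\omega_1}=\beta_{\omega_2}$.
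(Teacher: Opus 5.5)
Your proof is correct and follows the paper's argument: you compute $\beta_{\omega_1}(\sigma,\tau)=\zeta_2^{\sigma_1\tau_2-\tau_1\sigma_2}$ and test against $\tau=\widehat{(1,0)}$ and $\tau=\widehat{(0,1)}$ to force $\sigma_2=\sigma_1=0$. You also check that the trivial class ($k=0$) is degenerate and that non-degeneracy depends only on the cohomology class; the paper leaves both implicit, but they are what the word ``unique'' in the statement requires.
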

\begin{proof}
	The skew-symmetric bicharacter $\beta_{\omega_1}$ associated with $\omega_1$ is given by
	$$ 
	\beta_{\omega_1}(\sigma, \tau) = \omega_1(\sigma, \tau) \overline{\omega_1(\tau, \sigma)} = \zeta_2^{\sigma_1 \tau_2 - \tau_1 \sigma_2}. 
	$$
	Assume that $\beta_{\omega_1}(\sigma, \tau) = 1$ for all $\tau \in \widehat{C_2^{\times 2}}$. 
	By substituting $\tau = \widehat{(1, 0)}$, we obtain $\zeta_2^{-\sigma_2} = 1$, which implies $\sigma_2 = 0$. 
	Similarly, by substituting $\tau = \widehat{(0, 1)}$, we find $\zeta_2^{\sigma_1} = 1$, which implies $\sigma_1 = 0$. 
	This proves that $\omega_1$ is non-degenerate.
\end{proof}

\begin{lem}\label{non-deg C4}
	The non-degenerate 2-cocycle classes in $H^2(\widehat{C_4^{\times 2}}, \mathbb{T})$ are represented by $\omega_1$ and $\omega_3$, which are defined for $\sigma, \tau \in \widehat{C_4^{\times 2}}$ as
	$$ 
	\omega_1(\sigma, \tau) \coloneqq \zeta_4^{\sigma_1 \tau_2} \quad \text{and} \quad \omega_3(\sigma, \tau) \coloneqq \zeta_4^{3 \sigma_1 \tau_2}. 
	$$
\end{lem}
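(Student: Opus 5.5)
We follow the pattern of the proof of Lemma \ref{non-deg C2}. The preceding lemma gives the representatives $\omega_k(\sigma,\tau)=\zeta_4^{k\sigma_1\tau_2}$, $k=0,1,2,3$, of $H^2(\widehat{C_4^{\times 2}},\mathbb{T})\simeq C_4$. Non-degeneracy depends only on $\beta_\omega$, and by the first lemma of the cocycle subsection $\beta_\omega$ determines the class of $\omega$. It therefore suffices to compute the associated bicharacter of each $\omega_k$ and test non-degeneracy directly. For every $k$ we have
\[
\beta_{\omega_k}(\sigma,\tau)=\omega_k(\sigma,\tau)\overline{\omega_k(\tau,\sigma)}=\zeta_4^{k(\sigma_1\tau_2-\tau_1\sigma_2)},
\]
where the exponent is read modulo $4$.

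For $k=1$ and $k=3$ we show non-degeneracy. Suppose $\beta_{\omega_k}(\sigma,\tau)=1$ for all $\tau$. Substituting $\tau=\widehat{(1,0)}$ gives $\zeta_4^{-k\sigma_2}=1$, so $4\mid k\sigma_2$. Since $k$ is odd, $k$ is a unit modulo $4$, hence $\sigma_2\equiv 0 \pmod 4$. Substituting $\tau=\widehat{(0,1)}$ gives $\zeta_4^{k\sigma_1}=1$, so likewise $\sigma_1\equiv 0$. Thus $\sigma$ is trivial, and $\omega_1,\omega_3$ are non-degenerate.

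For $k=0$ and $k=2$ we exhibit a non-trivial element of the radical. When $k=0$, $\beta$ is identically $1$, so every $\sigma$ lies in the radical. When $k=2$, take $\sigma_0=\widehat{(2,0)}$. Then
\[
\beta_{\omega_2}(\sigma_0,\tau)=\zeta_4^{2\cdot 2\tau_2}=\zeta_4^{4\tau_2}=1
\]
for all $\tau$, although $\sigma_0\neq e$. This is the same kind of argument as the exclusion of $C_4\times C_2\times C_2$ in Theorem \ref{candidate of N}. Hence exactly the classes of $\omega_1$ and $\omega_3$ are non-degenerate.

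The only point needing care is that these two are genuinely distinct classes. This holds because $\beta_{\omega_1}(\widehat{(1,0)},\widehat{(0,1)})=\zeta_4$ while $\beta_{\omega_3}(\widehat{(1,0)},\widehat{(0,1)})=\zeta_4^3$. Beyond that there is no real obstacle: everything reduces to whether $k$ is invertible modulo $4$.
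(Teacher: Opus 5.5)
Your proof is correct and follows the paper's route. The paper simply says the argument is the same as for Lemma~\ref{non-deg C2}: compute $\beta_{\omega_k}(\sigma,\tau)=\zeta_4^{k(\sigma_1\tau_2-\tau_1\sigma_2)}$, test it against $\widehat{(1,0)}$ and $\widehat{(0,1)}$, and use that $k$ is a unit mod $4$. You also exhibit the radical element $\widehat{(2,0)}$ for $k=2$ and note that $k=0$ is trivially degenerate, which the paper leaves implicit. Your version therefore proves that \emph{only} $\omega_1$ and $\omega_3$ are non-degenerate, not just that these two are.
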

\begin{proof}
	The proof follows similarly to that of Lemma \ref{non-deg C2}.
\end{proof}

\begin{lem}
	There are exactly $28$ non-degenerate 2-cocycle classes in $H^2(\widehat{C_2^{\times 4}}, \mathbb{T})$, as listed in Table \ref{nondeg 28 cocycles}. 
	The associated matrix $\Lambda_\omega$ for each $\omega$ is defined in the following proof.
		\begin{longtable}{|c|c|c|c|c|c|}
		\caption{Non-degenerate 2-cocycles in $Z^2(\widehat{C_2^{\times 4}}, \mathbb{T})$} 
		\label{nondeg 28 cocycles} \\ 
		\hline \textup{No.} & $\Lambda_\omega$ & $\omega(\sigma,\tau)$ & \textup{No.} & $\Lambda_\omega$ & $\omega(\sigma,\tau)$ \\ \hline 
		\endfirsthead
		\hline \multicolumn{6}{|r|}{Continued on next page} \\ \hline
		\endfoot
		\hline \multicolumn{6}{|r|}{End of table} \\ \hline 
		\endlastfoot
		
		1 & 
		$
		\left(\begin{smallmatrix}
		0 & 0 & 0 & 1 \\
		0 & 0 & 1 & 0 \\
		0 & 1 & 0 & 0 \\
		1 & 0 & 0 & 0 \\
		\end{smallmatrix}\right)
		$
		&
		$(-1)^{\sigma_1\tau_4+\sigma_2\tau_3}$
		&
		2
		&
		$
		\left(\begin{smallmatrix}
		0 & 0 & 0 & 1 \\
		0 & 0 & 1 & 0 \\
		0 & 1 & 0 & 1 \\
		1 & 0 & 1 & 0 \\
		\end{smallmatrix}\right)
		$
		&
		$(-1)^{\sigma_1\tau_4+\sigma_2\tau_3+\sigma_3\tau_4}$ \\ \hline
		
		3 & 
		$
		\left(\begin{smallmatrix}
		0 & 0 & 0 & 1 \\
		0 & 0 & 1 & 1 \\
		0 & 1 & 0 & 0 \\
		1 & 1 & 0 & 0 \\
		\end{smallmatrix}\right)
		$
		&
		$(-1)^{\sigma_1\tau_4+\sigma_2\tau_3+\sigma_2\tau_4}$
		&
		4
		&
		$
		\left(\begin{smallmatrix}
		0 & 0 & 0 & 1 \\
		0 & 0 & 1 & 1 \\
		0 & 1 & 0 & 1 \\
		1 & 1 & 1 & 0 \\
		\end{smallmatrix}\right)
		$
		&
		$(-1)^{\sigma_1\tau_4+\sigma_2\tau_3+\sigma_2\tau_4+\sigma_3\tau_4}$ \\ \hline
		
		5 & 
		$
		\left(\begin{smallmatrix}
		0 & 0 & 1 & 0 \\
		0 & 0 & 0 & 1 \\
		1 & 0 & 0 & 0 \\
		0 & 1 & 0 & 0 \\
		\end{smallmatrix}\right)
		$
		&
		$(-1)^{\sigma_1\tau_3+\sigma_2\tau_4}$
		&
		6
		&
		$
		\left(\begin{smallmatrix}
		0 & 0 & 1 & 0 \\
		0 & 0 & 0 & 1 \\
		1 & 0 & 0 & 1 \\
		0 & 1 & 1 & 0 \\
		\end{smallmatrix}\right)
		$
		&
		$(-1)^{\sigma_1\tau_3+\sigma_2\tau_4+\sigma_3\tau_4}$ \\ \hline
		
		7 & 
		$
		\left(\begin{smallmatrix}
		0 & 0 & 1 & 0 \\
		0 & 0 & 1 & 1 \\
		1 & 1 & 0 & 0 \\
		0 & 1 & 0 & 0 \\
		\end{smallmatrix}\right)
		$
		&
		$(-1)^{\sigma_1\tau_3++\sigma_2\tau_3+\sigma_2\tau_4}$
		&
		8
		&
		$
		\left(\begin{smallmatrix}
		0 & 0 & 1 & 0 \\
		0 & 0 & 1 & 1 \\
		1 & 1 & 0 & 1 \\
		0 & 1 & 1 & 0 \\
		\end{smallmatrix}\right)
		$
		&
		$(-1)^{\sigma_1\tau_3+\sigma_2\tau_3+\sigma_2\tau_4+\sigma_3\tau_4}$ \\ \hline
		
		9 & 
		$
		\left(\begin{smallmatrix}
		0 & 0 & 1 & 1 \\
		0 & 0 & 0 & 1 \\
		1 & 0 & 0 & 0 \\
		1 & 1 & 0 & 0 \\
		\end{smallmatrix}\right)
		$
		&
		$(-1)^{\sigma_1\tau_3+\sigma_1\tau_4+\sigma_2\tau_4}$
		&
		10
		&
		$
		\left(\begin{smallmatrix}
		0 & 0 & 1 & 1 \\
		0 & 0 & 0 & 1 \\
		1 & 0 & 0 & 1 \\
		1 & 1 & 1 & 0 \\
		\end{smallmatrix}\right)
		$
		&
		$(-1)^{\sigma_1\tau_3+\sigma_1\tau_4+\sigma_2\tau_4+\sigma_3\tau_4}$ \\ \hline
		
		11 & 
		$
		\left(\begin{smallmatrix}
		0 & 0 & 1 & 1 \\
		0 & 0 & 1 & 0 \\
		1 & 1 & 0 & 0 \\
		1 & 0 & 0 & 0 \\
		\end{smallmatrix}\right)
		$
		&
		$(-1)^{\sigma_1\tau_3+\sigma_1\tau_4+\sigma_2\tau_3}$
		&
		12
		&
		$
		\left(\begin{smallmatrix}
		0 & 0 & 1 & 1 \\
		0 & 0 & 1 & 0 \\
		1 & 1 & 0 & 1 \\
		1 & 0 & 1 & 0 \\
		\end{smallmatrix}\right)
		$
		&
		$(-1)^{\sigma_1\tau_3+\sigma_1\tau_4+\sigma_2\tau_3+\sigma_3\tau_4}$ \\ \hline
		
		13 & 
		$
		\left(\begin{smallmatrix}
		0 & 1 & 0 & 0 \\
		1 & 0 & 0 & 0 \\
		0 & 0 & 0 & 1 \\
		0 & 0 & 1 & 0 \\
		\end{smallmatrix}\right)
		$
		&
		$(-1)^{\sigma_1\tau_2+\sigma_3\tau_4}$
		&
		14
		&
		$
		\left(\begin{smallmatrix}
		0 & 1 & 0 & 0 \\
		1 & 0 & 0 & 1 \\
		0 & 0 & 0 & 1 \\
		0 & 1 & 1 & 0 \\
		\end{smallmatrix}\right)
		$
		&
		$(-1)^{\sigma_1\tau_2+\sigma_2\tau_4+\sigma_3\tau_4}$ \\ \hline
		
		15 & 
		$
		\left(\begin{smallmatrix}
		0 & 1 & 0 & 0 \\
		1 & 0 & 1 & 0 \\
		0 & 1 & 0 & 1 \\
		0 & 0 & 1 & 0 \\
		\end{smallmatrix}\right)
		$
		&
		$(-1)^{\sigma_1\tau_2+\sigma_2\tau_3+\sigma_3\tau_4}$
		&
		16
		&
		$
		\left(\begin{smallmatrix}
		0 & 1 & 0 & 0 \\
		1 & 0 & 1 & 1 \\
		0 & 1 & 0 & 1 \\
		0 & 1 & 1 & 0 \\
		\end{smallmatrix}\right)
		$
		&
		$(-1)^{\sigma_1\tau_2+\sigma_2\tau_3+\sigma_2\tau_4+\sigma_3\tau_4}$ \\ \hline
		
		17 & 
		$
		\left(\begin{smallmatrix}
		0 & 1 & 0 & 1 \\
		1 & 0 & 0 & 0 \\
		0 & 0 & 0 & 1 \\
		1 & 0 & 1 & 0 \\
		\end{smallmatrix}\right)
		$
		&
		$(-1)^{\sigma_1\tau_2+\sigma_1\tau_4+\sigma_3\tau_4}$
		&
		18
		&
		$
		\left(\begin{smallmatrix}
		0 & 1 & 0 & 1 \\
		1 & 0 & 0 & 1 \\
		0 & 0 & 0 & 1 \\
		1 & 1 & 1 & 0 \\
		\end{smallmatrix}\right)
		$
		&
		$(-1)^{\sigma_1\tau_2+\sigma_1\tau_4+\sigma_2\tau_4+\sigma_3\tau_4}$ \\ \hline
		
		19 & 
		$
		\left(\begin{smallmatrix}
		0 & 1 & 0 & 1 \\
		1 & 0 & 1 & 0 \\
		0 & 1 & 0 & 0 \\
		1 & 0 & 0 & 0 \\
		\end{smallmatrix}\right)
		$
		&
		$(-1)^{\sigma_1\tau_2+\sigma_1\tau_4+\sigma_2\tau_3}$
		&
		20
		&
		$
		\left(\begin{smallmatrix}
		0 & 1 & 0 & 1 \\
		1 & 0 & 1 & 1 \\
		0 & 1 & 0 & 0 \\
		1 & 1 & 0 & 0 \\
		\end{smallmatrix}\right)
		$
		&
		$(-1)^{\sigma_1\tau_2+\sigma_1\tau_4+\sigma_2\tau_3+\sigma_2\tau_4}$ \\ \hline
		
		21 & 
		$
		\left(\begin{smallmatrix}
		0 & 1 & 1 & 0 \\
		1 & 0 & 0 & 0 \\
		1 & 0 & 0 & 1 \\
		0 & 0 & 1 & 0 \\
		\end{smallmatrix}\right)
		$
		&
		$(-1)^{\sigma_1\tau_2+\sigma_1\tau_3+\sigma_3\tau_4}$
		&
		22
		&
		$
		\left(\begin{smallmatrix}
		0 & 1 & 1 & 0 \\
		1 & 0 & 0 & 1 \\
		1 & 0 & 0 & 0 \\
		0 & 1 & 0 & 0 \\
		\end{smallmatrix}\right)
		$
		&
		$(-1)^{\sigma_1\tau_2+\sigma_1\tau_3+\sigma_2\tau_4}$ \\ \hline
		
		23 & 
		$
		\left(\begin{smallmatrix}
		0 & 1 & 1 & 0 \\
		1 & 0 & 1 & 0 \\
		1 & 1 & 0 & 1 \\
		0 & 0 & 1 & 0 \\
		\end{smallmatrix}\right)
		$
		&
		$(-1)^{\sigma_1\tau_2+\sigma_1\tau_3+\sigma_2\tau_3+\sigma_3\tau_4}$
		&
		24
		&
		$
		\left(\begin{smallmatrix}
		0 & 1 & 1 & 0 \\
		1 & 0 & 1 & 1 \\
		1 & 1 & 0 & 0 \\
		0 & 1 & 0 & 0 \\
		\end{smallmatrix}\right)
		$
		&
		$(-1)^{\sigma_1\tau_2+\sigma_1\tau_3+\sigma_2\tau_3+\sigma_2\tau_4}$ \\ \hline
		
		25 & 
		$
		\left(\begin{smallmatrix}
		0 & 1 & 1 & 1 \\
		1 & 0 & 0 & 0 \\
		1 & 0 & 0 & 1 \\
		1 & 0 & 1 & 0 \\
		\end{smallmatrix}\right)
		$
		&
		$(-1)^{\sigma_1\tau_2+\sigma_1\tau_3+\sigma_1\tau_4+\sigma_3\tau_4}$
		&
		26
		&
		$
		\left(\begin{smallmatrix}
		0 & 1 & 1 & 1 \\
		1 & 0 & 0 & 1 \\
		1 & 0 & 0 & 0 \\
		1 & 1 & 0 & 0 \\
		\end{smallmatrix}\right)
		$
		&
		$(-1)^{\sigma_1\tau_2+\sigma_1\tau_3+\sigma_1\tau_4+\sigma_2\tau_4}$ \\ \hline
		
		27 & 
		$
		\left(\begin{smallmatrix}
		0 & 1 & 1 & 1 \\
		1 & 0 & 1 & 0 \\
		1 & 1 & 0 & 0 \\
		1 & 0 & 0 & 0 \\
		\end{smallmatrix}\right)
		$
		&
		$(-1)^{\sigma_1\tau_2+\sigma_1\tau_3+\sigma_1\tau_4+\sigma_2\tau_3}$
		&
		28
		&
		$
		\left(\begin{smallmatrix}
		0 & 1 & 1 & 1 \\
		1 & 0 & 1 & 1 \\
		1 & 1 & 0 & 1 \\
		1 & 1 & 1 & 0 \\
		\end{smallmatrix}\right)
		$
		&
		$(-1)^{\sigma_1\tau_2+\sigma_1\tau_3+\sigma_1\tau_4+\sigma_2\tau_3+\sigma_2\tau_4+\sigma_3\tau_4}$ \\ \hline	
		\end{longtable}
	\end{lem}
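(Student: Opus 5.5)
Our plan is to reduce non-degeneracy to a statement about symmetric matrices over $\mathbb{F}_2$ and then count them. By the preceding lemma, every class in $H^2(\widehat{C_2^{\times 4}},\T)$ has a unique representative $\omega(\sigma,\tau)=\prod_{i<j}\lambda_{ij}^{\sigma_i\tau_j}$ with $\lambda_{ij}=(-1)^{a_{ij}}$, $a_{ij}\in\{0,1\}$. To such $\omega$ we attach the $4\times 4$ matrix $\Lambda_\omega=(a_{ij})$ over $\mathbb{F}_2$, setting $a_{ji}:=a_{ij}$ for $i<j$ and $a_{ii}:=0$. Thus $\Lambda_\omega$ is symmetric with zero diagonal, i.e.\ alternating over $\mathbb{F}_2$. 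This gives a bijection between the $2^6=64$ cohomology classes and alternating $4\times 4$ matrices over $\mathbb{F}_2$.

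Next we compute the bicharacter:
\[
\beta_\omega(\sigma,\tau)=(-1)^{\sum_{i<j}a_{ij}(\sigma_i\tau_j-\tau_i\sigma_j)}=(-1)^{\sigma^t\Lambda_\omega\tau},
\]
where we use that $-1\equiv 1 \pmod 2$, so the two sums combine into the full symmetric form. Hence $\beta_\omega(\sigma,\cdot)\equiv 1$ iff $\sigma^t\Lambda_\omega=0$ in $\mathbb{F}_2^4$. Therefore $\omega$ is non-degenerate iff $\Lambda_\omega$ is invertible over $\mathbb{F}_2$. This is the same argument as in Lemma \ref{non-deg C2}.

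It remains to count invertible alternating $4\times4$ matrices over $\mathbb{F}_2$. The determinant of an alternating matrix is the square of its Pfaffian, so invertibility is equivalent to
\[
\mathrm{Pf}(\Lambda)=a_{12}a_{34}+a_{13}a_{24}+a_{14}a_{23}=1 \in\mathbb{F}_2 .
\]
We count solutions by splitting into the three products $x=a_{12}a_{34}$, $y=a_{13}a_{24}$, $z=a_{14}a_{23}$. Each product equals $1$ for exactly one of the four choices of its pair and $0$ for the other three. We need $x+y+z$ odd:
\begin{itemize}
\item exactly one of $x,y,z$ equals $1$: $3\cdot 1\cdot 3\cdot 3=27$ matrices;
\item all three equal $1$: $1$ matrix.
\end{itemize}
This gives $28$ in total. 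As a cross-check, the number of symplectic forms on $\mathbb{F}_2^4$ equals $|GL_4(\mathbb{F}_2)|/|Sp_4(\mathbb{F}_2)|=20160/720=28$.

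Finally, we verify that Table \ref{nondeg 28 cocycles} lists exactly these $28$ matrices, each with the stated formula $\omega(\sigma,\tau)=(-1)^{\sum_{i<j}a_{ij}\sigma_i\tau_j}$. Concretely, each entry should be distinct and have Pfaffian $1$; the table is organized by the first row $(a_{12},a_{13},a_{14})$. This is bookkeeping rather than a conceptual step. The only real obstacle is to be careful that the upper-triangular representative correctly produces the symmetric $\Lambda_\omega$ via $\beta_\omega$. That sign bookkeeping is resolved by working mod $2$.
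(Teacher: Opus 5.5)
Your argument is correct, and its reduction step is the same as the paper's. Both identify the radical of $\beta_\omega$ with the kernel of the symmetric zero-diagonal matrix $\Lambda_\omega$ over $\mathbb{F}_2$, so non-degeneracy becomes $\mathrm{rank}\,\Lambda_\omega=4$.

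The difference is in how $28$ is obtained. The paper only asserts that ``a direct enumeration'' of such matrices gives $28$. You derive it in closed form from $\det\Lambda=\mathrm{Pf}(\Lambda)^2=\mathrm{Pf}(\Lambda)$ over $\mathbb{F}_2$; your $27+1$ split is right, and the orbit count $|GL_4(\mathbb{F}_2)|/|Sp_4(\mathbb{F}_2)|=20160/720$ confirms it. You also state explicitly, via $\omega\mapsto\beta_\omega$, that distinct matrices give distinct cohomology classes, which the paper leaves to the preceding lemma.

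Your route needs no enumeration, and it also explains the layout of the table. Fix the first row $(a_{12},a_{13},a_{14})$. Then $\mathrm{Pf}=a_{12}a_{34}+a_{13}a_{24}+a_{14}a_{23}$ is a linear form in $(a_{23},a_{24},a_{34})$, and it is nonzero exactly when that first row is nonzero. So each of the $7$ nonzero first rows has exactly $4$ completions. These are precisely the blocks of entries $1$--$4$, $5$--$8$, \dots, $25$--$28$, which makes your final ``bookkeeping'' check immediate. Carrying out that check, the table is correct apart from the typographical ``$++$'' in entry $7$.

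The paper's enumeration, for its part, directly yields the explicit indexed list that its later GAP computations refer to.
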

\begin{proof}
	Let $\{e_i\}_{i=1}^4$ be the standard basis of $C_2^{\times 4}$. 
	We consider a 2-cocycle $\omega \in Z^2(\widehat{C_2^{\times 4}}, \mathbb{T})$ of the form
	$$ 
	\omega(\sigma, \tau) = \prod_{1 \leq i < j \leq 4} \lambda_{ij}^{\sigma_i \tau_j}, $$
	where $\lambda_{ij} \in \{1, -1\}$. 
	Let the radical of the associated skew-symmetric bicharacter $\beta_\omega$ be $R_\omega \coloneqq \{\sigma \in \widehat{C_2^{\times 4}} \mid \beta_\omega(\sigma, \tau) = 1 \text{ for all } \tau \in \widehat{C_2^{\times 4}}\}$.
	For $i > j$, we define $\lambda_{ij} \coloneqq \lambda_{ji}$, and for $i = j$, we set $\lambda_{ii} \coloneqq 1$.
	Then we easily obtain that \(\sigma\in R_\omega\) if and only if \(\prod_i\lambda^{\sigma_i}_{ij}=1\) for all \(j\), namely
	\begin{align}
		\lambda^{\sigma_1}_{11}\lambda^{\sigma_2}_{21}\lambda^{\sigma_3}_{31}\lambda^{\sigma_4}_{41} &= 1 \label{lambda1}, \\ 
		\lambda^{\sigma_1}_{12}\lambda^{\sigma_2}_{22}\lambda^{\sigma_3}_{32}\lambda^{\sigma_4}_{42} &= 1, \label{lambda2} \\
		\lambda^{\sigma_1}_{13}\lambda^{\sigma_2}_{23}\lambda^{\sigma_3}_{33}\lambda^{\sigma_4}_{43} &= 1, \label{lambda3} \\
		\lambda^{\sigma_1}_{14}\lambda^{\sigma_2}_{24}\lambda^{\sigma_3}_{34}\lambda^{\sigma_4}_{44} &= 1. \label{lambda4}
	\end{align}
	Over the field $\mathbb{F}_2 \cong \{0, 1\}$, let $[\lambda_{ij}] = 1$ if $\lambda_{ij} = -1$ and $[\lambda_{ij}] = 0$ if $\lambda_{ij} = 1$.
	Then the condition \eqref{lambda1}, \eqref{lambda2}, \eqref{lambda3} and \eqref{lambda4} are equivalent to the following system of linear equations over $\mathbb{F}_2$:
	\[\sum_{i=1}^4\sigma_i[\lambda_{ij}]=0\ \ \text{for each}\ \ j\in 1, 2, 3, 4.\]
	
	Let \(\Lambda_\omega\coloneqq([\lambda_{ij}])_{1\leq i,j\leq 4}\) be the symmetric matrix. 
	We see that \(\sigma\in R_\omega\) if and only if  \(\sigma\) lies in the kernel of \(\Lambda_\omega\). 
	In order for \(\omega\) to be non-degenerate, i.e., \(R_\omega=\{(0,0,0,0)\}\), \(\mathrm{rank}\ \Lambda_\omega=4\) is required. 
	The reverse is also true.
	A direct enumeration of such matrices yields exactly 28 patterns, as detailed in Table \ref{nondeg 28 cocycles}.
\end{proof}

\begin{rem}
	In what follows, the non-degenerate 2-cocycles of $\widehat{C_2^{\times 4}}$ are identified by the indices given in Table \ref{nondeg 28 cocycles}. 
\end{rem}

We next investigate the $G$-invariance of these non-degenerate 2-cocycles.
 
\begin{lem}\label{G-inv for 2_2}
	Let \(N\) be a normal subgroup of \(G\) and isomorphic to \(C^{\times2}_2\).
	For each $g \in G$, let the action of $g$ on the dual basis of $\widehat{N}$ be given by
	$$
	\widehat{\begin{pmatrix} 1 \\ 0 \end{pmatrix}}^g = \begin{pmatrix} \sigma_1 \\ \sigma_2 \end{pmatrix}, \quad \widehat{\begin{pmatrix} 0 \\ 1 \end{pmatrix}}^g = \begin{pmatrix} \tau_1 \\ \tau_2 \end{pmatrix}.
	$$
	Let $A(g)$ be the matrix
	$$ 
	A(g) = 
	\begin{pmatrix} 
	\sigma_1 & \tau_1 \\ \sigma_2 & \tau_2 
	\end{pmatrix} 
	\in GL(2, C_2). 
	$$
	The unique non-degenerate 2-cocycle $\omega_1 \in Z^2(\widehat{N}, \mathbb{T})$ in Lemma \ref{non-deg C2} is $G$-invariant if and only if $\det A(g) \equiv 1 \pmod 2$ for all $g \in G$.
\end{lem}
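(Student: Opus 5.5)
Since $N$ is abelian, the earlier lemma comparing bicharacters tells us that $\omega_1$ is $G$-invariant exactly when $\beta_{\omega_1^g}=\beta_{\omega_1}$ for every $g\in G$. So the plan is to compute $\beta_{\omega_1^g}$ in terms of $A(g)$ and compare it with $\beta_{\omega_1}$.

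\textbf{Step 1: rewrite $\beta_{\omega_1}$ as a determinant.} From the proof of Lemma \ref{non-deg C2}, $\beta_{\omega_1}(\sigma,\tau)=\zeta_2^{\sigma_1\tau_2-\sigma_2\tau_1}$. Modulo $2$ the exponent equals $\det(\sigma\ \tau)$, the determinant of the $2\times 2$ matrix with columns $\sigma$ and $\tau$ (viewed as vectors over $\mathbb{F}_2$).

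\textbf{Step 2: express the action of $g$ linearly.} The map $\sigma\mapsto\sigma^g$ is a group automorphism of $\widehat{N}$, because $\sigma^g(n)=\sigma(gng^{-1})$ and conjugation by $g$ is an automorphism of $N$. By the definition of $A(g)$, its columns are the images of the basis vectors. Hence, in coordinates over $\mathbb{F}_2$, we have $\sigma^g=A(g)\sigma$.

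\textbf{Step 3: compute $\beta_{\omega_1^g}$.} Since $\omega_1^g(\sigma,\tau)=\omega_1(\sigma^g,\tau^g)$,
\[
\beta_{\omega_1^g}(\sigma,\tau)=\beta_{\omega_1}(A(g)\sigma,A(g)\tau)=\zeta_2^{\det(A(g)\sigma\ A(g)\tau)}=\zeta_2^{\det A(g)\cdot\det(\sigma\ \tau)}.
\]

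\textbf{Step 4: compare.} We therefore have $\beta_{\omega_1^g}=\beta_{\omega_1}^{\det A(g)}$.
\begin{itemize}
\item If $\det A(g)\equiv 1 \pmod 2$, then $\beta_{\omega_1^g}=\beta_{\omega_1}$.
\item If $\det A(g)\equiv 0 \pmod 2$, then $\beta_{\omega_1^g}$ is trivial, while $\beta_{\omega_1}(\widehat{(1,0)},\widehat{(0,1)})=-1$. So the two bicharacters differ.
\end{itemize}
Taking this over all $g\in G$ gives the stated equivalence.

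\textbf{Possible obstacle.} One might worry that $A(g)$ fails to be invertible, since the statement only asks for $\det A(g)\equiv 1$. But $A(g)$ represents an automorphism of $\widehat{N}$, so it lies in $GL(2,C_2)$ and $\det A(g)\equiv 1 \pmod 2$ holds automatically. So $\omega_1$ is in fact always $G$-invariant, and the lemma is consistent with that: its condition is simply always satisfied. The only real care needed is the convention that the columns of $A(g)$ are the images of the basis vectors, which is what makes $\sigma^g=A(g)\sigma$ valid rather than its transpose; the determinant is unaffected by that choice in any case.
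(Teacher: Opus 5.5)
Your proof is correct and is essentially the paper's argument. Both compute $\beta_{\omega_1^g}(\upsilon,\phi)=\beta_{\omega_1}(A(g)\upsilon,A(g)\phi)=\zeta_2^{(\upsilon_1\phi_2-\upsilon_2\phi_1)\det A(g)}$ and compare it with $\beta_{\omega_1}$ using the bicharacter criterion for $G$-invariance. Your extra remark is also right and worth keeping: since $A(g)\in GL(2,\mathbb{F}_2)$, the condition $\det A(g)\equiv 1 \pmod 2$ holds automatically, so $\omega_1$ is always $G$-invariant (the paper does not say this, but it matches every $C_2\times C_2$ row in its tables being marked invariant).
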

\begin{proof}
	We compare \(\beta_{\omega_1^g}\) with \(\beta_{\omega_1}\). For any \(\upsilon\), \(\phi\in \widehat{N}\), we have
	\[\beta_{\omega_1}(\upsilon,\phi)=\zeta_2^{\upsilon_1\phi_2-\upsilon_2\phi_1},\]
	and
	\[\beta_{\omega_1^g}(\upsilon,\phi)=\beta_{\omega_1}(A(g)\upsilon,A(g)\phi)=\zeta_2^{(\upsilon_1\phi_2-\upsilon_2\phi_1)\det A(g)}.\]
	Thus, $\det A(g) \equiv 1 \pmod 2$ is necessary for $\omega_1$ to be $G$-invariant. The converse is trivial.
\end{proof}

\begin{lem}\label{G-inv for 4_4}
	Let $N$ be a normal subgroup of $G$ isomorphic to $C_4^{\times 2}$. 
	For each $g \in G$, let the action of $g$ on the dual basis of $\widehat{N}$ be given by
	$$ 
	\widehat{\begin{pmatrix} 1 \\ 0 \end{pmatrix}}^g = \begin{pmatrix} \sigma_1 \\ \sigma_2 \end{pmatrix}, \quad \widehat{\begin{pmatrix} 0 \\ 1 \end{pmatrix}}^g = \begin{pmatrix} \tau_1 \\ \tau_2 \end{pmatrix}. 
	$$
	Let $A(g)$ be the matrix
	$$ 
	A(g) = \begin{pmatrix} \sigma_1 & \tau_1 \\ \sigma_2 & \tau_2 \end{pmatrix} \in GL(2, C_4). 
	$$
	Then the non-degenerate 2-cocycles $\omega_1, \omega_3 \in Z^2(\widehat{N}, \mathbb{T})$ defined in Lemma \ref{non-deg C4} are $G$-invariant if and only if $\det A(g) \equiv 1 \pmod 4$ for all $g \in G$.
\end{lem}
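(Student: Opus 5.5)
The plan is to follow the proof of Lemma \ref{G-inv for 2_2} verbatim, now working over $\mathbb{Z}/4$ instead of $\mathbb{Z}/2$. By the lemma characterizing $G$-invariance through bicharacters, $\omega_k$ ($k=1,3$) is $G$-invariant if and only if $\beta_{\omega_k^g}=\beta_{\omega_k}$ for every $g\in G$. So the whole argument reduces to computing both bicharacters explicitly and comparing their exponents.

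First, from Lemma \ref{non-deg C4}, I compute
\[\beta_{\omega_k}(\upsilon,\phi)=\omega_k(\upsilon,\phi)\overline{\omega_k(\phi,\upsilon)}=\zeta_4^{k(\upsilon_1\phi_2-\upsilon_2\phi_1)}.\]
This is $\zeta_4$ raised to $k$ times the $2\times 2$ determinant $\det(\upsilon\,|\,\phi)$ taken mod $4$.

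Next, I note that $\sigma\mapsto\sigma^g$ is a group automorphism of $\widehat{N}\cong C_4^{\times 2}$. In coordinates it is therefore given by the matrix $A(g)$, whose columns are the images of the dual basis, so that $\upsilon^g=A(g)\upsilon$ with entries read mod $4$. Since $\omega_k^g(\upsilon,\phi)=\omega_k(\upsilon^g,\phi^g)$, we get
\[\beta_{\omega_k^g}(\upsilon,\phi)=\beta_{\omega_k}(A(g)\upsilon,A(g)\phi)=\zeta_4^{k\det(A(g)\upsilon\,|\,A(g)\phi)}=\zeta_4^{k\det A(g)\,(\upsilon_1\phi_2-\upsilon_2\phi_1)}.\]
The last step uses multiplicativity of the determinant, which is valid over the commutative ring $\mathbb{Z}/4$ and so compatible with reducing exponents mod $4$.

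The two directions now follow. If $\det A(g)\equiv 1\pmod 4$, the two bicharacters coincide, which gives sufficiency. For necessity, I evaluate at $\upsilon=\widehat{(1,0)}$ and $\phi=\widehat{(0,1)}$. Equality of the bicharacters then forces $\zeta_4^{k\det A(g)}=\zeta_4^{k}$, i.e. $k(\det A(g)-1)\equiv 0\pmod 4$. Since $k\in\{1,3\}$ is a unit mod $4$, this gives $\det A(g)\equiv 1\pmod 4$. The argument is the same for $\omega_1$ and $\omega_3$, which explains why the statement treats them together.

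The only step needing care is the middle one: justifying that the action on $\widehat{N}$ is linear with matrix $A(g)\in GL(2,C_4)$, and that the exponent arithmetic may be done mod $4$ (the exponents only matter mod $4$ because $\zeta_4^4=1$). Invertibility of $A(g)$ follows because $g^{-1}$ acts by the inverse map. Beyond that, the proof is a routine computation.
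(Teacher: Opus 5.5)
Your proposal is correct and is essentially the paper's proof. The paper only says the argument is analogous to Lemma \ref{G-inv for 2_2}, namely computing $\beta_{\omega_k^g}(\upsilon,\phi)=\beta_{\omega_k}(A(g)\upsilon,A(g)\phi)=\zeta_4^{k\det A(g)(\upsilon_1\phi_2-\upsilon_2\phi_1)}$ and comparing it with $\beta_{\omega_k}$; your evaluation at the dual basis pair and your remark that $k\in\{1,3\}$ is a unit mod $4$ supply details the paper leaves implicit.
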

\begin{proof}
	The proof is analogous to that of Lemma \ref{G-inv for 2_2}.
\end{proof}

\begin{lem}\label{G-inv for 2_2_2_2}
	Let $N$ be a normal subgroup of $G$ isomorphic to $C_2^{\times 4}$. 
	For each $g \in G$, we denote the action of $g$ on the standard dual basis $\{\hat{e}_i\}_{i=1}^4$ of $\widehat{N}$ by
	\[
	\widehat{e_1}^g\coloneqq\sigma=\dualcolumnvecvec{\sigma_1}{\sigma_2}{\sigma_3}{\sigma_4},
	\ \ \ 
	\widehat{e_2}^g\coloneqq\tau=\dualcolumnvecvec{\tau_1}{\tau_2}{\tau_3}{\tau_4},
	\]
	\[
	\widehat{e_3}^g\coloneqq\upsilon=\dualcolumnvecvec{\upsilon_1}{\upsilon_2}{\upsilon_3}{\upsilon_4},
	\ \ \ 
	\widehat{e_4}^g\coloneqq\phi=\dualcolumnvecvec{\phi_1}{\phi_2}{\phi_3}{\phi_4}.
	\]
	A non-degenerate 2-cocycle $\omega \in Z^2(\widehat{N}, \mathbb{T})$ from Table \ref{nondeg 28 cocycles}, expressed as $\omega(\chi, \psi) = \prod_{1 \leq i < j \leq 4} \lambda_{ij}^{\chi_i \psi_j}$, is $G$-invariant if and only if
	\[
	\lambda_{12}^{\sigma_i\tau_j-\sigma_j\tau_i}\lambda_{13}^{\sigma_i\upsilon_j-\sigma_j\upsilon_i}\lambda_{14}^{\sigma_i\phi_j-\sigma_j\phi_i}\lambda_{23}^{\tau_i\upsilon_j-\tau_j\upsilon_i}\lambda_{24}^{\tau_i\phi_j-\tau_j\phi_i}\lambda_{34}^{\upsilon_i\phi_j-\upsilon_j\phi_i}
	=
	\lambda_{ij}
	\]
	holds for all $1 \leq i < j \leq 4$ and all $g \in G$.
\end{lem}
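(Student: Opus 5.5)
By the first lemma of the subsection on 2-cocycles, $\omega$ is $G$-invariant if and only if $\beta_{\omega^g}=\beta_\omega$ for every $g\in G$. The plan is to compute both bicharacters explicitly and compare them on pairs of basis elements. Since $\beta_\omega$ and $\beta_{\omega^g}$ are skew-symmetric bicharacters, they agree as soon as they agree on all pairs $(\widehat{e_i},\widehat{e_j})$ with $i<j$. Indeed, multiplicativity in each variable reduces the comparison to basis pairs. Skew-symmetry, $\beta(\chi,\chi)=1$, and the fact that every value is $\pm1$ (because $\widehat N$ has exponent $2$) then dispose of the pairs with $i\ge j$.

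First I evaluate $\beta_\omega$ on basis pairs. For $i<j$, the only factor of $\omega(\widehat{e_i},\widehat{e_j})=\prod_{k<l}\lambda_{kl}^{(e_i)_k(e_j)_l}$ that survives is $\lambda_{ij}$. Every factor of $\omega(\widehat{e_j},\widehat{e_i})$ is trivial, because it would require $k=j>i=l$. Hence $\beta_\omega(\widehat{e_i},\widehat{e_j})=\lambda_{ij}$. More generally, for arbitrary $\chi,\psi\in\widehat N$ I get
\[
\beta_\omega(\chi,\psi)=\prod_{k<l}\lambda_{kl}^{\chi_k\psi_l-\chi_l\psi_k},
\]
and I will justify this directly from the definition of $\beta_\omega$; the exponents only matter modulo $2$.

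Next I evaluate $\beta_{\omega^g}$ on basis pairs. By definition $\omega^g(\chi,\psi)=\omega(\chi^g,\psi^g)$, so $\beta_{\omega^g}(\chi,\psi)=\beta_\omega(\chi^g,\psi^g)$. With the notation of the statement, $\widehat{e_1}^g=\sigma$, $\widehat{e_2}^g=\tau$, $\widehat{e_3}^g=\upsilon$, $\widehat{e_4}^g=\phi$. Plugging a pair such as $(\sigma,\tau)$ into the displayed formula gives
\[
\beta_\omega(\sigma,\tau)=\prod_{k<l}\lambda_{kl}^{\sigma_k\tau_l-\sigma_l\tau_k}.
\]
The same computation applies to $\beta_{\omega^g}(\widehat{e_i},\widehat{e_j})$ for every pair $i<j$. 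Equating these values with $\lambda_{ij}$ for all $i<j$ and all $g$ gives the stated system of identities.

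I expect the main obstacle to be an indexing mismatch. In the displayed condition the roles of the indices look swapped relative to my computation: the bases $\lambda_{12},\dots,\lambda_{34}$ are indexed by the pairs of images $(\sigma,\tau),(\sigma,\upsilon),\dots$, while the exponents involve the coordinates $i,j$. To reconcile the two forms, I would expand $\beta_\omega(\chi^g,\psi^g)$ by bilinearity in the other order. Write $\chi^g=\sum_k\chi_k\widehat{e_k}^g$, so that $\beta_{\omega^g}(\widehat{e_i},\widehat{e_j})$ is read off from the $(i,j)$ coordinate entries of the images. This amounts to applying the computation to the transposed matrix of images, that is, to the action of $g^{-1}$ or to the dual pairing. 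Since the condition is required for all $g\in G$, and $g\mapsto g^{-1}$ permutes $G$, the two formulations are equivalent.

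I would carry out this matrix bookkeeping carefully. Writing $A(g)$ for the matrix with columns $\sigma,\tau,\upsilon,\phi$, the invariance condition reads $A(g)^t\Lambda_\omega A(g)=\Lambda_\omega$ over $\mathbb F_2$ (with zero diagonal). The paper's displayed condition is the entrywise form of this equation in its transposed variant, $A(g)\Lambda_\omega A(g)^t=\Lambda_\omega$, which holds for $g^{-1}$ exactly when the first holds for $g$.
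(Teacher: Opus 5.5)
Your core computation matches the paper's proof: $\beta_{\omega^g}(\widehat{e_i},\widehat{e_j})=\beta_\omega(\widehat{e_i}^g,\widehat{e_j}^g)=\prod_{k<l}\lambda_{kl}^{\chi_k\psi_l-\chi_l\psi_k}$ with $\chi=\widehat{e_i}^g$ and $\psi=\widehat{e_j}^g$. In matrix form this is $A(g)^t\Lambda_\omega A(g)=\Lambda_\omega$. You are also right that this is not literally the displayed system, which is the entrywise form of $A(g)\Lambda_\omega A(g)^t=\Lambda_\omega$. The paper does not address this; it simply asserts that extending its computation ``yields'' the displayed equations.

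The gap is your reconciliation. We have $A(g^{-1})=A(g)^{-1}$, not $A(g)^t$. Substituting $g^{-1}$ into $A\Lambda_\omega A^t=\Lambda_\omega$ gives $A^{-1}\Lambda_\omega (A^{-1})^t=\Lambda_\omega$, which is the same condition for $g$ again. By contrast, $A^t\Lambda_\omega A=\Lambda_\omega$ is equivalent to $A\Lambda_\omega^{-1}A^t=\Lambda_\omega^{-1}$. So the two conditions agree for every $A$ when $\Lambda_\omega^2=I$ over $\mathbb{F}_2$, which among the $28$ entries happens only for Nos.~1, 5, 13, 28. They do not agree in general. Your instinct about the ``dual pairing'' was the right one, but the step you actually commit to ($g\mapsto g^{-1}$) is false.

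This step cannot be repaired. For the remaining entries, the displayed system is genuinely not equivalent to $G$-invariance when $\sigma,\tau,\upsilon,\phi$ are the dual images, as stated.

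Here is a concrete counterexample. Take No.~2 and $G=(N\rtimes\langle a\rangle)\times C_2$, where $a$ is an involution with $ae_1a^{-1}=e_1e_4$ that fixes $e_2,e_3,e_4$. Then $A(a)=I+E_{14}$, so $\sigma=\widehat{e_1}$, $\tau=\widehat{e_2}$, $\upsilon=\widehat{e_3}$ and $\phi=\widehat{e_1}\widehat{e_4}$.
\begin{itemize}
\item One checks $A^t\Lambda_\omega A=\Lambda_\omega$, so $\omega$ is $G$-invariant.
\item For $(i,j)=(1,3)$, the left-hand side of the displayed identity is $\lambda_{13}\lambda_{34}^{-1}=-1$, whereas $\lambda_{13}=1$.
\end{itemize}

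What your argument, like the paper's, actually proves is the criterion $\prod_{k<l}\lambda_{kl}^{\chi_k\psi_l-\chi_l\psi_k}=\lambda_{ij}$ for $\chi=\widehat{e_i}^g$ and $\psi=\widehat{e_j}^g$. Equivalently, the displayed system becomes correct if $\sigma,\tau,\upsilon,\phi$ are read as the coordinate vectors of $ge_1g^{-1},\dots,ge_4g^{-1}\in N$, i.e.\ the rows of $A(g)$. The transpose passes between the action on $N$ and the dual action on $\widehat N$, not between $g$ and $g^{-1}$. You should state that corrected version rather than try to derive the displayed one.
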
 
\begin{proof}
	It is sufficient to verify this condition on the standard dual basis: $\beta_{\omega^g}(\hat{e}_i, \hat{e}_j) = \beta_\omega(\hat{e}_i, \hat{e}_j)$ for all $1 \leq i < j \leq 4$. 
	For instance, the condition for the pair $(i, j) = (1, 2)$ is given by
	$$
	\beta_{\omega^g}(\hat{e}_1, \hat{e}_2) = \beta_\omega(\hat{e}_1^g, \hat{e}_2^g) = \beta_\omega(\sigma, \tau).
	$$
	Substituting the specific form of $\beta_\omega$, we require
	$$
	\prod_{1 \leq k < l \leq 4} \lambda_{kl}^{\sigma_k \tau_l - \sigma_l \tau_k} = \lambda_{12}.
	$$
	Extending this to all pairs $(\hat{e}_i, \hat{e}_j)$ yields the system of equations stated in the lemma.
\end{proof}

\section{GAP Implementation for Isocategorical Equivalence}
The computational results presented in this paper rely on algorithms implemented in GAP. 
For a detailed description of the scripts and functions, the reader is referred to the GitHub repository \texttt{isocategorical-group-structure}. 
The full source code used to compute isocategorical groups in this study is available for download in the \texttt{Sourcecode.g} file at the following URL:
\url{https://github.com/barley-tea-3416-cloud/isocategorical-group-structure/releases/tag/1.0.1}.

\subsection{Overview of GAP}
GAP (Groups, Algorithms, and Programming) \cite{GAP4} is a software system for computational discrete algebra, with a particular emphasis on computational finite group theory. 
GAP provides a robust framework for investigating the structural properties of groups; in addition to standard tasks such as verifying the normality of subgroups, it facilitates complex computations involving group isomorphisms, automorphisms, and finitely presented groups.

\subsection{The Izumi--Kosaki Example}
Izumi and Kosaki \cite{IzumiKosaki2002} demonstrated the existence of a pair of groups of order 64 that are isocategorical but not isomorphic.
In this section, we reconstruct this pair using GAP. Let \(N\coloneqq C^{\times2}_4\) and \(H\coloneqq\{1,h_1,h_2,h_1h_2\}\subset \mathrm{SL_2}(C_4)\), where 
\[
h_1 \coloneqq
\begin{pmatrix}
1 & 2 \\
0 & 1 
\end{pmatrix},
\ \ \ 
h_2 \coloneqq
\begin{pmatrix}
1 & 0 \\
2 & 1 
\end{pmatrix}.
\]

We consider the semidirect product \(\Gizumi\coloneqq N\rtimes H\), where the action of $H$ on $N$ is given by the standard matrix multiplication. 
For example, for any $m \in C_4^{\times 2}$, the action of $h_1$ is given by:
$$
h_1 \cdot \begin{pmatrix} m_1 \\ m_2 \end{pmatrix} = \begin{pmatrix} 1 & 2 \\ 0 & 1 \end{pmatrix} \begin{pmatrix} m_1 \\ m_2 \end{pmatrix} = \begin{pmatrix} m_1 + 2m_2 \\ m_2 \end{pmatrix}.
$$
The order of \(\Gizumi\) is \(64\).
We represent $\Gizumi$ as a quotient of the free group $F_4 = \langle x, y, s, t \rangle$ via the following correspondence:
$$
x \mapsto \begin{pmatrix} 1 \\ 0 \end{pmatrix}, \quad y \mapsto \begin{pmatrix} 0 \\ 1 \end{pmatrix}, \quad s \mapsto h_1, \quad t \mapsto h_2.
$$
Let \(R:\widehat{N}\to N\) be a group isomorphism between \(N\) and its Pontryagin dual \(\widehat{N}\), namely \(\langle R\sigma,\sigma\rangle=1\) for all \(\sigma\in \widehat{N}\). 
Through $R$, we identify $\widehat{N}$ with \(C_4^{\times2}\).
Consider the non-degenerate 2-cocycle $\omega_1 \in Z^2(\widehat{N}, \mathbb{T})$ defined by $\omega_1(\sigma, \tau) = \zeta_4^{\sigma_1 \tau_2}$ for $\sigma, \tau \in \widehat{N}$. 
By calculating the determinants of the actions:
$$
\det A(s) = \det \begin{pmatrix} 1 & 0 \\ 2 & 1 \end{pmatrix} = 1, \quad \det A(t) = \det \begin{pmatrix} 1 & 2 \\ 0 & 1 \end{pmatrix} = 1,
$$
we confirm that $\omega_1$ is $\Gizumi$-invariant. 
Consequently, $\Gizumi$ and $\Gizumi^{\omega_1}$ are isocategorical and \(\Gizumi^{\omega_1}\) is a group.
The group $\Gizumi^{\omega_1} = \{ \tilde{g} \mid g \in \Gizumi \}$ is defined by the following relations:
\begin{align}
\tilde{s}\tilde{x} &= \tilde{x}\tilde{s} \\
\tilde{s}\tilde{y} &= \tilde{x}^2\tilde{y}\tilde{s} \\
\tilde{t}\tilde{x} &= \tilde{x}\tilde{y}^2\tilde{t} \\
\tilde{s}\tilde{s} &= \tilde{x}\tilde{x} \\
\tilde{t}\tilde{t} &= \tilde{y}\tilde{y} \\
\tilde{s}\tilde{t} &= \tilde{t}\tilde{s} =\widetilde{st} \\
\label{7th}
\tilde{s}\widetilde{st} &= \widetilde{st}\tilde{s}=\tilde{x}^2\tilde{t} \\
\label{8th}
\tilde{t}\widetilde{st} &= \widetilde{st}\tilde{t}=\tilde{y}^2\tilde{s}
\end{align}

It should be noted that there are typographical errors in the relations presented in \cite{IzumiKosaki2002} corresponding to our Eqs. \eqref{7th} and \eqref{8th}. 
Our corrected relations were verified using the GAP code in Listing \ref{construction of Gw}.
Finally, $\Gizumi^{\omega_1}$ can be realized as a quotient of the free group $F_6 = \langle a, b, c, d, e, f \rangle$ with the identification:
\begin{align*}
\tilde{x} &= a, & \tilde{y} &= b, & \widetilde{xy} &= c, \\
\tilde{s} &= d, & \tilde{t} &= e, & \widetilde{st} &= f.
\end{align*}
In Listing \ref{construction of Gw}, $G$ represents a group isomorphic to $\Gizumi$. As shown in the output, $\Gizumi^{\omega_1}$ is not isomorphic to $\Gizumi$.

\begin{lstlisting}[caption=Construction of \(\Gizumi^{\omega_1}\) and isomorphism test with \(\Gizumi\), language=GAP,label=construction of Gw,escapechar=@]
gap> F6:= FreeGroup("a","b","c","d","e","f");
<free group on the generators [ a, b, c, d, e, f ]>
gap> AssignGeneratorVariables(F6);
#I  Assigned the global variables [ a, b, c, d, e, f ]
gap> Gw := F6/
> [
> a^4,
> b^4,
> a*b*a^-1*b^-1,
> c*b^-1*a^-1,
> a*d*a^-1*d^-1,
> b*e*b^-1*e^-1,
> a^2*b*d*b^-1*d^-1,
> a*b^2*e*a^-1*e^-1,
> a^2*d^-2,
> b^2*e^-2,
> c^2*f^-2,
> d*f*e^-1*a^-2,
> f*d*e^-1*a^-2,
> e*f*d^-1*b^-2,
> f*e*d^-1*b^-2,
> d*e*f^-1,
> e*d*f^-1
> ];
<fp group on the generators [ a, b, c, d, e, f ]>
gap> Order(Gw);
64
gap> StructureDescription(Gw);
"((C4 x C4) : C2) : C2"
gap> IsomorphismGroups(G,Gw);
fail
\end{lstlisting}

\subsection{Computational Implementation of Isocategorical Equivalence}
Let $G$ be a group of order 64. 
In this section, we describe the algorithmic approach to identifying groups that are isocategorical to $G$. 
To implement this search in GAP, we must address the following two computational problems:

\begin{prob}\label{prob1}
	For a given normal subgroup $N$ of $G$, how can we implement the $G$-invariance conditions for a 2-cocycle $\omega \in Z^2(\widehat{N}, \mathbb{T})$ (as established in Lemmas \ref{G-inv for 2_2}, \ref{G-inv for 4_4}, and \ref{G-inv for 2_2_2_2}) within the GAP environment?
\end{prob}

\begin{prob}\label{prob2}
	How can we effectively construct the multiplication table for $G^\omega = \{ \tilde{g} \}_{g \in G}$ in GAP, based on the relations
	$$ 
	\tilde{g} \cdot \tilde{h} = \widetilde{\eta(g, h) gh} 
	$$
	defined in Definition \ref{Gw relations defn}?
\end{prob}

To begin with, we present a solution to Problem \ref{prob1}.

\subsubsection{Identification of Normal Subgroups isomorphic to \(C^{\times2}_2\), \(C^{\times2}_4\), or \(C^{\times4}_2\)}
\urldef{\mygithuburlone}\url{https://github.com/barley-tea-3416-cloud/isocategorical-group-structure/blob/1.0.1/Sourcecode.g#L536-L559}

The implementation details of this search procedure can be found in GitHub repository\footnote{\mygithuburlone}.
We define a GAP function \texttt{ObtainedSubgroups(G)}, which returns the set of all normal subgroups of $G$ that are isomorphic to either $C_2^{\times 2}$, $C_4^{\times 2}$, or $C_2^{\times 4}$.

\subsubsection{Determine G-invariance of a non-degenerate 2-cocycle \(\omega\in Z^2(\widehat{N},\mathbb{T})\)}
Let $N$ be a normal subgroup of $G$ isomorphic to $C_2^{\times 2}$, $C_4^{\times 2}$, or $C_2^{\times 4}$.

\begin{itemize}[leftmargin=*]
	\item
	The case \(N\simeq C^{\times2}_2\)
\end{itemize}

As established in Lemma \ref{non-deg C2}, the only non-degenerate 2-cocycle class is represented by $\omega_1$. 
According to Lemma \ref{G-inv for 2_2}, it is sufficient to verify whether $\det A(g) \equiv 1 \pmod 2$ for all $g \in G$.
Fix $g \in G$. 
Suppose there exist $i, j, m, n \in \{0, 1\}$ such that:
$$ 
g \begin{pmatrix} 1 \\ 0 \end{pmatrix} g^{-1} = \begin{pmatrix} i \\ j \end{pmatrix} \quad \text{and} \quad g \begin{pmatrix} 0 \\ 1 \end{pmatrix} g^{-1} = \begin{pmatrix} m \\ n \end{pmatrix}. 
$$
In the Pontryagin dual $\widehat{N}$, this induces the dual action $\widehat{(1,0)}^g = \widehat{(i, m)}$ and $\widehat{(0,1)}^g = \widehat{(j, n)}$. 
This yields the matrix $A(g)$ representing the action on $\widehat{N}$ as:
$$ 
A(g) = \begin{pmatrix} i & j \\ m & n \end{pmatrix}. 
$$

\urldef{\mygithuburltwo}\url{https://github.com/barley-tea-3416-cloud/isocategorical-group-structure/blob/1.0.1/Sourcecode.g#L594-L651}

The implementation details are provided in GitHub repository.\footnote{\mygithuburltwo}
Specifically, we define the following GAP functions:
\begin{itemize}
	\item
	\texttt{Checkginv\_C2xC2(g, N)}: This function first identifies the basis vectors $(1,0)$ and $(0,1)$ of $N$. 
	It then computes the conjugated elements $g(1,0)g^{-1} = (i,j)$ and $g(0,1)g^{-1} = (m,n)$, and returns the matrix $A(g) = \begin{pmatrix} i & j \\ m & n \end{pmatrix}$.
	
	\item
	\texttt{CheckGinv\_C2xC2(G, N)}: This function iterates over all $g \in G$ to compute $\det A(g) \pmod 2$. 
	If the condition $\det A(g) \equiv 1$ is satisfied for every $g \in G$, it returns the string \texttt{"N is G invariant."}; otherwise, it returns \texttt{"N is not G invariant."}.
\end{itemize}

\begin{itemize}[leftmargin=*]
	\item
	The case \(N\simeq C^{\times2}_4\)
\end{itemize}

As established in Lemma \ref{non-deg C4}, there are two non-degenerate 2-cocycle classes represented by $\omega_1$ and $\omega_3$. 
However, it suffices to verify the $G$-invariance of $\omega_1$ alone, as demonstrated by the following lemma.
\begin{lem}
	Let $N$ be a normal subgroup of $G$ isomorphic to $C_4^{\times 2}$, and let $\omega_1, \omega_3$ be the non-degenerate 2-cocycles defined in Lemma \ref{non-deg C4}. 
	Then the following hold:
	\begin{enumerate}[font=\upshape]
		\item 
		The $G$-invariance of $\omega_1$ is equivalent to the $G$-invariance of $\omega_3$.
		\item 
		If $\omega_1$ is $G$-invariant, then the twisted group $G^{\omega_1}$ is isomorphic to $G^{\omega_3}$.
	\end{enumerate}
\end{lem}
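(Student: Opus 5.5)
Part (1) will follow directly from Lemma \ref{G-inv for 4_4}, which says that each of $\omega_1$ and $\omega_3$ is $G$-invariant if and only if $\det A(g)\equiv 1 \pmod 4$ for all $g\in G$. To keep the argument self-contained, I will also record the underlying computation. Since $\omega_3=\omega_1^3$, we have $\beta_{\omega_3}=\beta_{\omega_1}^3=\overline{\beta_{\omega_1}}$, because $\beta_{\omega_1}$ takes values in $\{\pm1,\pm i\}$ and so $\beta_{\omega_1}^4=1$. Furthermore, $\beta_{\omega_k^g}=(\beta_{\omega_k})^{\det A(g)}$ for $k=1,3$. The condition $\beta_{\omega_3^g}=\beta_{\omega_3}$ is therefore the complex conjugate of $\beta_{\omega_1^g}=\beta_{\omega_1}$, and the two conditions are equivalent.

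For part (2), I plan to realize $\omega_3$ as the pullback of $\omega_1$ under an automorphism of $G$ that preserves $N$. The cleanest route is to use cohomology on $\widehat N$ directly. The class $[\omega_3]=[\omega_1]^{-1}=[\overline{\omega_1}]$, and $\overline{\omega_1}(\sigma,\tau)\sim\omega_1(\tau,\sigma)$, since for a bicharacter the flip gives the cohomology class with inverse skew form. I would then prefer a twisting argument that needs no automorphism at all. I expect $G^{\overline{\omega}}$ to be identified with $(G^{\omega})^{\mathrm{op}}$, or with $G^{\omega}$ via inversion. The concrete step is as follows: on $\C[G]$, the antipode $S(g)=g^{-1}$ is an anti-coalgebra map, and $(S\otimes S)(\omega)$ composed with the flip gives the cocycle that twists to the opposite coproduct. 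Applying the $*$-anti-automorphism $S$ should then map the twisted Hopf algebra $(\C[G],\Delta^{\omega_1})$ onto $(\C[G],\Delta^{\omega'})^{\mathrm{cop}}$, where $\omega'=\Sigma (S\otimes S)(\omega_1)$.

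For a group algebra of an abelian $N$, $S$ corresponds to $\sigma\mapsto\sigma^{-1}$ on $\widehat N$. This gives $\omega'(\sigma,\tau)=\omega_1(\tau^{-1},\sigma^{-1})=\omega_1(\tau,\sigma)$, using bimultiplicativity. The skew form of $\omega'$ is $\overline{\beta_{\omega_1}}=\beta_{\omega_3}$, so $\omega'\sim\omega_3$. Since $G^{\omega_1}$ is cocommutative, its coopposite is the same Hopf algebra. Hence $(\C[G],\Delta^{\omega_3})\cong(\C[G],\Delta^{\omega_1})$ as Hopf algebras, and their group-like elements give $G^{\omega_3}\cong G^{\omega_1}$. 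By Theorem \ref{relation of Gw}, this isomorphism is concretely $\tilde g\mapsto \widetilde{g^{-1}}^{\,-1}$ up to the scalars $\xi_g$.

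The main obstacle will be checking the antipode and twist bookkeeping carefully. This means verifying that $S$ intertwines $\Delta^{\omega}$ with $(\Delta^{\omega'})^{\mathrm{op}}$, including the placement of $\omega$ versus $\omega^*$. If that becomes messy, my fallback is explicit: write $A(g)$ for the dual action and use Definition \ref{Gw relations defn}. Replacing $\omega_1$ by $\omega_3$ replaces $\xi_g$ by $\overline{\xi_g}$, since $\partial\overline{\xi_g}$ solves the $\omega_3=\overline{\omega_1}$ equation. The cocycle $\eta$ then becomes $\eta^{-1}$. This identifies $G^{\omega_3}$ with the extension of $G/N$ by $N$ twisted by $\eta^{-1}$, and the map $n\mapsto n^{-1}$ on $N$ combined with the identity on $G/N$ should give the isomorphism. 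The last check is that inversion on $N$ commutes with the $G/N$-action, which holds because $N$ is abelian.
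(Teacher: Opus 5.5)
Part (1) of your proposal is fine. Part (2) has a genuine gap, and neither of your two routes closes it.

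\textbf{The antipode route.} It breaks at exactly the bookkeeping step you flagged. Since $S\otimes S$ reverses products, it moves $\omega$ and $\omega^*$ to opposite sides. Checking on $x=g$, one finds that $S$ intertwines $\Delta^{\omega_1}$ with $(\Delta^{\omega'})^{\mathrm{op}}$ for
\[\omega'=\bigl(\Sigma(S\otimes S)(\omega_1)\bigr)^*,\qquad \text{i.e.}\quad \omega'(\sigma,\tau)=\overline{\omega_1(\tau,\sigma)}.\]
The skew form of this $\omega'$ is $\beta_{\omega_1}$, not $\overline{\beta_{\omega_1}}$. So this route only yields $G^{\omega_1}\cong G^{\omega_1}$.

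\textbf{The fallback route.} Your choice $\xi^{(3)}_g=\overline{\xi^{(1)}_g}$ and the resulting $\eta^{(3)}=(\eta^{(1)})^{-1}$ are correct; this is also where the paper's own sketch stops. The error is in treating $G^{\omega}$ as the extension with factor set $\eta$.
\begin{itemize}
\item Since $\widetilde{ng}=n\tilde g$, if $s(x)s(y)=f(x,y)s(xy)$ in $G$, then $\tilde s(x)\tilde s(y)=\eta f\,\tilde s(xy)$ in $G^{\omega}$. The factor set is $\eta f$, not $\eta$.
\item Inversion on $N$ together with the identity on $G/N$ sends the class $[f]+[\eta]$ to $-[f]-[\eta]$, not to $[f]-[\eta]$. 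It is therefore a homomorphism only when $2[f]=0$.
\item A counterexample: take $G=\langle x\rangle\times\langle y\rangle\cong C_{16}\times C_4$ and $N=\langle x^4,y\rangle$. Then $\omega_1$ is invariant, $\eta=1$, and $G^{\omega_1}=G^{\omega_3}=G$. Yet any such $\phi$ has $\phi(x)=xn$ with $n\in N$, whence $x^{-4}=\phi(x^4)=x^4n^4=x^4$, which is impossible.
\end{itemize}

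\textbf{The missing idea.} You need that $\eta$ and $\eta^{-1}$ are cohomologous. Take instead $\xi^{(3)}_g(\sigma)\coloneqq\overline{\xi^{(1)}_g(\sigma^{-1})}$.
\begin{itemize}
\item It solves the $\omega_3$-equation, because $\omega_1$ is bimultiplicative and $(\sigma^{-1})^g=(\sigma^g)^{-1}$.
\item It gives $\eta^{(3)}=\eta^{(1)}$ on the nose.
\item Hence $\tilde g^{(1)}\mapsto\tilde g^{(3)}$ is an isomorphism, acting as the identity on $N$ and on $G/N$.
\end{itemize}

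Conceptually, this is coefficientwise conjugation $\sum\lambda_g g\mapsto\sum\overline{\lambda_g}\,g$ on $\C[G]$. In function terms it is $\omega\mapsto\overline{\omega(\sigma^{-1},\tau^{-1})}$. It is an antilinear ring automorphism that commutes with $\Delta$ and sends $\omega_1$ to $\omega_3$. It therefore carries the group-likes of $\Delta^{\omega_1}$ multiplicatively onto those of $\Delta^{\omega_3}$. This, not the antipode, is the symmetry you were looking for.
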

\begin{proof}
	\begin{enumerate}[leftmargin=*]
		\item
		Recall that $\omega_3(\sigma, \tau) = \zeta_4^{3\sigma_1\tau_2} = \omega_1(\sigma, \tau)^3$ for all $\sigma, \tau \in \widehat{N}$. 
		Since $\zeta_4^3 = \zeta_4^{-1}$ in $\mathbb{T}$, $\omega_3$ is the inverse of $\omega_1$. 
		If $\omega_1$ is $G$-invariant, then for any $g \in G$, we have
		$$
		\omega_3(\sigma^g, \tau^g) = \omega_1(\sigma^g, \tau^g)^3 = \omega_1(\sigma, \tau)^3 = \omega_3(\sigma, \tau),
		$$
		which implies that $\omega_3$ is also $G$-invariant. 
		The converse holds by the same argument.
		
		\item
		The group structure of $G^\omega$ is described by the extension 
		$$
		1 \to N \to G^\omega \to G/N \to 1
		$$
		determined by the factor set $\eta$. 
		Since $\omega_3 = \omega_1^{-1}$, the 1-cochain $\xi_g$ satisfying the invariance condition $\omega(\sigma^g, \tau^g) = \partial \xi_g(\sigma, \tau) \omega(\sigma, \tau)$ for $\omega_3$ can be chosen as $\xi_g^{(3)} = (\xi_g^{(1)})^{-1}$. 
		This choice of 1-cochains, combined with the inverse relationship between the cocycles, leads to a consistent identification of the relations for $G^{\omega_1}$ and $G^{\omega_3}$. 
		Consequently, the resulting twisted groups are isomorphic.
	\end{enumerate}
\end{proof}

\urldef{\mygithuburlthree}\url{https://github.com/barley-tea-3416-cloud/isocategorical-group-structure/blob/1.0.1/Sourcecode.g#L762-L821}

Consequently, it is sufficient to restrict our analysis to the cocycle $\omega_1$. 
The verification procedure is analogous to that for the case $N \simeq C_2^{\times 2}$.
As detailed in GitHub repository\footnote{\mygithuburlthree}, we utilize two GAP functions:

\begin{itemize}
	\item 
	\texttt{Checkginv\_C4xC4(g, N)}: This function returns the action matrix $A(g)$.
	
	\item
	\texttt{CheckGinv\_C4xC4(G, N)}: This function determines the $G$-invariance of the cocycle by evaluating the determinants of these matrices across the group generators.
\end{itemize}

\begin{itemize}[leftmargin=*]
	\item
	The case \(N\simeq C^{\times4}_2\)
\end{itemize}

Let $\omega_k$ be the $k$-th non-degenerate 2-cocycle and $\Lambda_{\omega_k} = ([\lambda_{ij}])_{i,j}$ be the matrix defined in Table \ref{nondeg 28 cocycles}. According to Lemma \ref{G-inv for 2_2_2_2}, it is sufficient to verify that the equality
$$
\lambda_{12}^{\sigma_i\tau_j-\sigma_j\tau_i}\lambda_{13}^{\sigma_i\upsilon_j-\sigma_j\upsilon_i}\lambda_{14}^{\sigma_i\phi_j-\sigma_j\phi_i}\lambda_{23}^{\tau_i\upsilon_j-\tau_j\upsilon_i}\lambda_{24}^{\tau_i\phi_j-\tau_j\phi_i}\lambda_{34}^{\upsilon_i\phi_j-\upsilon_j\phi_i}
=
\lambda_{ij}
$$
holds for all $1 \le i < j \le 4$ and all $g \in G$. 

\urldef{\mygithuburlfour}\url{https://github.com/barley-tea-3416-cloud/isocategorical-group-structure/blob/1.0.1/Sourcecode.g#L1497-L1584}

The implementation details are provided in GitHub repository\footnote{\mygithuburlfour}.

\begin{itemize}
	\item
	\texttt{Checkginv\_C2xC2xC2xC2\_ForElement(g, i, j, N, k)}:
	This function computes the left-hand side of the above equation and compares it with the value of $\lambda_{ij}$. 
	It returns \texttt{1} if the values coincide and \texttt{0} otherwise.
	
	\item
	\texttt{CheckGinv\_C2xC2xC2xC2(G, N, k)}: This function iterates the above check for all elements $g \in G$ and all index pairs $(i, j)$. 
	It returns \texttt{1} if the equality is satisfied for all inputs, and \texttt{0} otherwise.
\end{itemize}

Having addressed Problem \ref{prob1}, we now proceed to the solution for Problem \ref{prob2}.

\subsubsection{Construction of the Twisted Group $G^\omega$}

Once a 2-cocycle $\omega \in Z^2(\widehat{N}, \mathbb{T})$ is confirmed to be non-degenerate and $G$-invariant, the twisted group $G^\omega$ is well-defined. 
To investigate the structural properties of $G^\omega$, we must explicitly determine its multiplication rule:
$$ 
\tilde{g} \cdot \tilde{h} = \widetilde{\eta(g, h) gh}, \quad g, h \in G, 
$$
as established in Theorem \ref{relation of Gw}.
The practical implementation of this product in GAP relies on a consistent mapping between group elements and their respective indices. 
We first address the computation of the factor $\eta(g, h) \in N$.

\begin{itemize}[leftmargin=*]
	\item
	The case \(N\simeq C^{\times2}_2\)
\end{itemize}

\begin{lem}\label{eta2x2}
	Let $g, h \in G$. Let $A(g), A(h)$, and $A(gh)$ be the matrices representing the actions of $g, h$, and $gh$ on $\widehat{N}$, respectively, given by:
	\[
	A(g)=
	\begin{pmatrix}
	k & l \\
	m & n \\
	\end{pmatrix},\ 
	A(h)=
	\begin{pmatrix}
	a & b \\
	c & d \\
	\end{pmatrix},\ 
	A(gh)=
	\begin{pmatrix}
	p & q \\
	r & s \\
	\end{pmatrix}.
	\]
	Then \(\eta(g,h)\in N\) is given by
	\[
	\eta(g,h)=\left(\frac{-km-ack^2-bdm^2-2bckm+pr}{2},\frac{-ln -acl^2 -bdn^2 -2bcln +qs}{2}\right).
	\]
\end{lem}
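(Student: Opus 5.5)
The plan is to make the choice of the $1$-cochains $\xi_g$ in Theorem~\ref{relation of Gw} completely explicit for $N\simeq C_2^{\times2}$ and $\omega=\omega_1$. Then $\eta(g,h)$ can be read off from the defining relation
\[
\xi_{g}(\sigma)\,\xi_{h}(\sigma^{g})=\sigma(\eta(g,h))\,\xi_{gh}(\sigma),
\]
evaluated on the two dual basis characters $\widehat{(1,0)}$ and $\widehat{(0,1)}$. The relation determines $\sigma(\eta(g,h))$ for every $\sigma\in\widehat{N}$. Since a character of $C_2^{\times2}$ is determined by its values on the dual basis (and vice versa by duality), the first component of $\eta(g,h)$ is the exponent of $-1$ obtained at $\sigma=\widehat{(1,0)}$, and the second component is the one obtained at $\sigma=\widehat{(0,1)}$. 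This explains why the two components of the claimed formula have the same shape: one involves the first column $(k,m),(a,c),(p,r)$ and the other the second column $(l,n),(b,d),(q,s)$.

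\textbf{Step 1: an explicit quadratic $\xi_g$.} Since $\omega_1(\sigma,\tau)=(-1)^{\sigma_1\tau_2}$ is bilinear, $\omega_1(A(g)\sigma,A(g)\tau)\overline{\omega_1(\sigma,\tau)}$ is again a $\{\pm1\}$-valued bilinear form. By Lemma~\ref{G-inv for 2_2} ($\det A(g)\equiv1$), its antisymmetrization is trivial, so it is symmetric and hence a coboundary. I would solve for $\xi_g$ with values in $\{\pm1,\pm i\}$ of the form
\[
\xi_g(\sigma)=\zeta_4^{\,Q_g(\sigma)},
\]
where $Q_g$ is an integer-valued quadratic expression in the integer lifts $\sigma_1,\sigma_2\in\{0,1\}$ and the entries of $A(g)$. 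Concretely, I would take $Q_g(\sigma)$ to be (minus) the product of the two coordinates of $A(g)\sigma$ computed over $\mathbb{Z}$, corrected by the corresponding term for $\sigma$ itself. I would then check that
\[
\partial\xi_g(\sigma,\tau)=\zeta_4^{Q_g(\sigma)+Q_g(\tau)-Q_g(\sigma+\tau)}
\]
reproduces $\omega_1(\sigma^g,\tau^g)\overline{\omega_1(\sigma,\tau)}$. Here one has to be careful that $\sigma+\tau$ is reduced mod $2$ before $Q_g$ is applied. This produces cross terms with coefficient $2$, which become signs $\zeta_4^{2}=-1$. I would also check that this choice depends only on $A(g)$, so that $\xi_{gn}=\xi_g$ for $n\in N$ (since $N$ is abelian it acts trivially on $\widehat N$), and that $\xi_e=1$.

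\textbf{Step 2: evaluate the defining relation on the basis.} Put $\sigma=\widehat{(1,0)}$. Then $\sigma^g$ has coordinates $(k,m)$, and $(\sigma^g)^h$, i.e.\ the first column of $A(gh)$, is $(p,r)$. Substituting gives
\[
\zeta_4^{\,Q_g(\sigma)+Q_h(\sigma^g)-Q_{gh}(\sigma)}=\sigma(\eta(g,h))=(-1)^{\eta(g,h)_1}.
\]
Hence $\eta(g,h)_1$ is half the exponent. Expanding $Q_h$ at the vector $(k,m)$ with $A(h)=\begin{pmatrix}a&b\\c&d\end{pmatrix}$ gives the terms $ack^2+bdm^2+2bckm$ (the coordinates of $A(h)(k,m)^t$ multiplied together). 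The terms $-km$ and $+pr$ come from $Q_g(\sigma)$ and $-Q_{gh}(\sigma)$ respectively. This yields the numerator
\[
-km-ack^2-bdm^2-2bckm+pr.
\]
The same computation with $\sigma=\widehat{(0,1)}$ gives the second component, with $(l,n),(q,s)$ in place of $(k,m),(p,r)$.

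\textbf{Step 3: integrality and the main obstacle.} I would show that the numerator is even. Since $A(gh)=A(h)A(g)$ or $A(g)A(h)$ depending on the convention, $(p,r)\equiv A(h)(k,m)^t \pmod 2$. Therefore $pr$ and $(ak+bm)(ck+dm)$ agree mod $2$, and the numerator is $\equiv 0 \pmod 2$ after reducing.

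The real obstacle is bookkeeping of conventions and integer lifts:
\begin{enumerate}
\item the order of multiplication of $A(g)$ and $A(h)$ (whether $\sigma^{gh}=(\sigma^g)^h$ or the other way, which is what places $A(h)$ acting on $(k,m)$);
\item the transpose convention in how $A(g)$ is built from the conjugation data in the GAP description;
\item the fact that lifts to $\{0,1\}$ are not additive, so the quadratic $Q$ must be evaluated on reduced vectors.
\end{enumerate}
I would first fix these by checking the formula against the Izumi--Kosaki-style small cases (e.g.\ $A(g)=A(h)=1$, giving $\eta=0$). Only then would I do the general expansion, accepting that the sign in front of $-km$ versus $+pr$ is pinned down exactly by which of $Q_g$, $Q_{gh}$ carries the correction term.
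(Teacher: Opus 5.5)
Your strategy matches the paper's. You write $\xi_g=\zeta_4^{Q_g}$ with $Q_g$ quadratic in the $\{0,1\}$-lifts, then evaluate $\xi_g(\sigma)\xi_h(\sigma^g)\overline{\xi_{gh}(\sigma)}=\sigma(\eta(g,h))$ at $\widehat{(1,0)}$ and $\widehat{(0,1)}$. However, the concrete $Q_g$ you propose is wrong, and Step 2 rests on a false identity.

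With $Q_g(\sigma)=-\bigl[(A(g)\sigma)_1(A(g)\sigma)_2-\sigma_1\sigma_2\bigr]$ over $\mathbb{Z}$ you get $Q_g(\sigma)=-\bigl(km\sigma_1^2+ln\sigma_2^2+(kn+lm-1)\sigma_1\sigma_2\bigr)$. The coboundary equation needs $Q_g(\widehat{(1,0)})+Q_g(\widehat{(0,1)})-Q_g(\widehat{(1,1)})\equiv 2lm \pmod 4$. This is because $\omega_1(\sigma^g,\tau^g)\overline{\omega_1(\sigma,\tau)}=(-1)^{km\sigma_1\tau_1+lm(\sigma_1\tau_2+\sigma_2\tau_1)+ln\sigma_2\tau_2}$ once $kn-1\equiv lm \pmod 2$ is used. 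Your cross coefficient $kn+lm-1$ differs from $2lm$ by $kn-lm-1$. That difference is $-2$ whenever the $\{0,1\}$-lift of $A(g)$ has integer determinant $-1$, which happens for three of the six elements of $GL(2,C_2)$. For the swap matrix, for instance, your $Q_g\equiv 0$, so $\xi_g=1$, while $\omega_1^g\overline{\omega_1}=(-1)^{\sigma_1\tau_2+\sigma_2\tau_1}\neq 1$.

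The same issue breaks Step 2. The coordinates of $A(h)(k,m)^t$ multiply to $ack^2+(ad+bc)km+bdm^2$, not $ack^2+bdm^2+2bckm$. With your correction term you get $-ack^2-bdm^2-(ad+bc-1)km$, which matches the target only when $ad-bc=1$. Concretely, take $A(g)=\begin{pmatrix}1&0\\1&1\end{pmatrix}$ and $A(h)=\begin{pmatrix}0&1\\1&0\end{pmatrix}$, so $A(gh)=A(h)A(g)=\begin{pmatrix}1&1\\1&0\end{pmatrix}$. Your recipe gives exponent $-1+0+1=0$, i.e.\ $\eta(g,h)_1=0$. The stated formula gives $(-1-2+1)/2\equiv 1$.

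The missing idea is to symmetrize the bilinear form before choosing the quadratic refinement, as the paper does. The correction term must be $\det_{\mathbb{Z}}A(g)\,\sigma_1\sigma_2$ rather than $\sigma_1\sigma_2$, since
\[
(k\sigma_1+l\sigma_2)(m\sigma_1+n\sigma_2)-(kn-lm)\sigma_1\sigma_2=km\sigma_1^2+ln\sigma_2^2+2lm\sigma_1\sigma_2 .
\]
This gives the paper's choice $\xi_g(\sigma)=\zeta_4^{-(km\sigma_1^2+ln\sigma_2^2+2lm\sigma_1\sigma_2)}$. With this $\xi_g$, your Step 2 substitution goes through. Using $\sigma^{gh}=(\sigma^g)^h$, i.e.\ $(p,r)^t\equiv A(h)(k,m)^t$, it produces exactly $-km-ack^2-bdm^2-2bckm+pr$, because $(ak+bm)(ck+dm)-(ad-bc)km=ack^2+2bckm+bdm^2$. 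Your parity check in Step 3 then also works, using $\det A(h)\equiv 1 \pmod 2$.
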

\begin{proof}
	By the definition of $A(g)$, the action on the dual group is given by:
	$$
	\sigma^g = A(g)\sigma = \begin{pmatrix} k\sigma_1 + l\sigma_2 \\ m\sigma_1 + n\sigma_2 \end{pmatrix} \quad \text{for any } \sigma = \begin{pmatrix} \sigma_1 \\ \sigma_2 \end{pmatrix} \in \widehat{N}.
	$$
	When $\omega_1 \in Z^2(\widehat{N}, \mathbb{T})$ is $G$-invariant, there exists a 1-cochain $\xi_g \colon \widehat{N} \to \mathbb{T}$ such that its coboundary $\partial\xi_g \colon \widehat{N} \times \widehat{N} \to \mathbb{T}$ satisfies:
	\begin{align*}
	\partial\xi_g(\sigma,\tau) &= \omega_1(\sigma^g,\tau^g)\overline{\omega_1(\sigma,\tau)} \\
	&= \zeta_4^{2(km\sigma_1\tau_1+lm\sigma_1\tau_2+lm\sigma_2\tau_1+ln\sigma_2\tau_2)} 
	\end{align*}
	for all \(\sigma\), \(\tau\in \widehat{N}\).
	Note that the condition $\det A(g) \equiv 1 \pmod 2$ is utilized in the above simplification.
	Then we have \(\xi_g\colon\widehat{N}\to\T\),  \(\xi_g(\sigma)=\zeta_4^{-(km\sigma_1^2+ln\sigma_2^2+2lm\sigma_1\sigma_2)}\).
	By substituting $\sigma = \begin{pmatrix} 1 \\ 0 \end{pmatrix}$ into the fundamental relation $\langle \sigma, \eta(g, h) \rangle = \xi_g(\sigma) \xi_h(\sigma^g) \overline{\xi_{gh}(\sigma)}$, we obtain:	
	\[
	\left\langle
	\columnvec{1}{0},
	\eta(g,h)
	\right\rangle
	=
	\zeta_4^{-km-ack^2-bdm^2-2bckm+pr},
	\]
	where $\langle \sigma, n \rangle \coloneqq \sigma(n)$ denotes the duality pairing between $\widehat{N}$ and $N$.
	Consequently, the first component of $\eta(g, h)$, denoted by $\eta(g, h)_1$, is given by:
	\[\eta(g,h)_1=\frac{-km-ack^2-bdm^2-2bckm+pr}{2}.\]
	The second component $\eta(g, h)_2$ is obtained via a similar substitution for $\sigma = \begin{pmatrix} 0 \\ 1 \end{pmatrix}$.
\end{proof}

\urldef{\mygithuburlfive}\url{https://github.com/barley-tea-3416-cloud/isocategorical-group-structure/blob/1.0.1/Sourcecode.g#L684-L727}

The implementation details for computing the factor $\eta(g, h)$ are provided in GitHub repository\footnote{\mygithuburlfive}. 
The GAP function \texttt{Eta\_C2xC2(N, g, h)} calculates the value of $\eta(g, h) \in N$ for a given pair of group elements $g, h \in G$ through the following procedure:

\begin{enumerate}
	\item 
	\textbf{Extraction of Matrices}: The function first obtains the action matrices $A(g)$, $A(h)$, and $A(gh)$ by calling \texttt{Checkginv\_C2xC2}.

	\item 
	\textbf{Component Retrieval}: It extracts the individual entries ($k, l, m, n$ for $g$; $a, b, c, d$ for $h$; and $p, q, r, s$ for $gh$) from these matrices to serve as variables for the formula.
	
	\item 
	\textbf{Computation}: Using the components, it evaluates the expressions \texttt{check1}\(\coloneqq \eta(g,h)_1\) and \texttt{check2}\(\coloneqq \eta(g,h)_2\) as defined in Lemma \ref{eta2x2}.
	
	\item 
	\textbf{Normalization and Output}: The results are divided by 2 and taken modulo \(2\). 
	Finally, the function returns the result as a group element of $N$ in the form \texttt{gen1\^{}want1 * gen2\^{}want2}, where \texttt{gen1} and \texttt{gen2} are the generators of $N$.
\end{enumerate}

\urldef{\mygithuburlsix}\url{https://github.com/barley-tea-3416-cloud/isocategorical-group-structure/blob/1.0.1/Sourcecode.g#L915-L981}

We now describe the implementation of the product relations for $G^{\omega_1}$. 
First, we index the elements of $G$ as $\{g_1, g_2, \dots, g_{64}\}$ using the GAP function \texttt{Elements(G)}. 
Let \texttt{Free64} be the free group generated by 64 generators $\{f_1, f_2, \dots, f_{64}\}$. 
For any pair of elements $g_i, g_j \in G$, there exists a unique index $k \in \{1, \dots, 64\}$ such that $\eta(g_i, g_j) g_i g_j = g_k$. 
The product in $G^{\omega_1}$, defined by $\tilde{g}_i \cdot \tilde{g}_j = \tilde{g}_k$, is then represented in \texttt{Free64} by the relation:
\[f_if_j=f_k.\]
By computing these relations for all pairs $(i, j)$ and taking the quotient of \texttt{Free64} by the resulting set of relators, we construct a finitely presented group isomorphic to $G^{\omega_1}$.
The implementation details are provided in GitHub repository \footnote{\mygithuburlsix}. 
The specific GAP functions are as follows:
\begin{itemize}
	\item
	\texttt{GomegaRelationsC2xC2(G, N, g, h)}: This function identifies the indices $i, j, k$ corresponding to $g, h$, and $\eta(g, h)gh$, respectively, and returns the relator $f_k f_j^{-1} f_i^{-1}$.
	
	\item
	\texttt{AllGomegaRelationsC2xC2(G, N)}: This function iterates over all $g, h \in G$ to generate the complete set of product relations.
	
	\item
	\texttt{FinalAllGomegaRelationsC2xC2(G, N)}: In addition to the product relations, this function incorporates the relations $f_i^2 = 1$ for generators corresponding to elements in $N$ to ensure the group structure is fully determined.
\end{itemize}

\begin{itemize}[leftmargin=*]
	\item
	The case \(N\simeq C^{\times2}_4\)
\end{itemize}

The procedure is identical to the $C_2^{\times 2}$ case described previously. Consequently, we provide only the corresponding Lemma and an overview of the relevant GAP functions.

\begin{lem}\label{eta4x4}
	Let $g, h \in G$. 
	Let $A(g)$, $A(h)$, and $A(gh)$ be the matrices representing the actions of $g, h$, and $gh$ on $\widehat{N}$, respectively, defined as:
	\[
	A(g)=
	\begin{pmatrix}
		k & l \\
		m & n \\
	\end{pmatrix},\quad
	A(h)=
	\begin{pmatrix}
		a & b \\
		c & d \\
	\end{pmatrix},\quad
	A(gh)=
	\begin{pmatrix}
		p & q \\
		r & s \\
	\end{pmatrix}.
	\]
	Then $\eta(g,h) \in N$ is given by:
	\[
	\eta(g,h) = \left( \frac{-km - ack^2 - bdm^2 - 2bckm + pr}{2}, \quad \frac{-ln - acl^2 - bdn^2 - 2bcln + qs}{2} \right).
	\]
\end{lem}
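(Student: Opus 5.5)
The plan is to follow the proof of Lemma \ref{eta2x2} step by step. The difference is that now $N\simeq C_4^{\times 2}$, $\omega_1(\sigma,\tau)=\zeta_4^{\sigma_1\tau_2}$, and the $G$-invariance condition of Lemma \ref{G-inv for 4_4} is $\det A(g)=kn-lm\equiv 1 \pmod 4$. All exponents below are read modulo $4$ when they sit on $\zeta_4$, and modulo $8$ when they sit on $\zeta_8\coloneqq e^{2\pi i/8}$.

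\textbf{Step 1: the coboundary.} Write $\sigma^g=A(g)\sigma=(k\sigma_1+l\sigma_2,\ m\sigma_1+n\sigma_2)$ and compute
\[
\omega_1(\sigma^g,\tau^g)\overline{\omega_1(\sigma,\tau)}=\zeta_4^{(k\sigma_1+l\sigma_2)(m\tau_1+n\tau_2)-\sigma_1\tau_2}.
\]
Using $kn\equiv 1+lm \pmod 4$, the exponent becomes the symmetric bilinear form
\[
B_g(\sigma,\tau)=km\sigma_1\tau_1+ln\sigma_2\tau_2+lm(\sigma_1\tau_2+\sigma_2\tau_1).
\]

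\textbf{Step 2: choosing $\xi_g$.} Set $q_g(\sigma)\coloneqq km\sigma_1^2+ln\sigma_2^2+2lm\sigma_1\sigma_2$, so that $q_g(\sigma+\tau)-q_g(\sigma)-q_g(\tau)=2B_g(\sigma,\tau)$. Then define
\[
\xi_g(\sigma)\coloneqq\zeta_8^{-q_g(\sigma)},
\]
which satisfies $\partial\xi_g=\zeta_4^{B_g}$. Here $\zeta_8$ is needed because the square-completion halves the exponent; this matches the $\zeta_4$ used in Lemma \ref{eta2x2}, one level up.

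I must check that $\xi_g$ is well defined on $\widehat N$, i.e.\ that $q_g$ is well defined modulo $8$ when $\sigma_i$ is only defined modulo $4$. This holds because $(\sigma_1+4)^2-\sigma_1^2=8\sigma_1+16\equiv 0 \pmod 8$, and the cross term $2lm\sigma_1\sigma_2$ changes by a multiple of $8$. The same reasoning handles replacing $k,l,m,n$ by other representatives modulo $4$. Since $A(g)$ depends only on the coset $gN$ ($N$ is abelian, so it acts trivially on $\widehat N$), this choice also gives the normalization $\xi_{gn}=\xi_g$ and $\xi_e=1$ required in Theorem \ref{relation of Gw}(2).

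\textbf{Step 3: reading off $\eta$.} Apply the relation $\langle\sigma,\eta(g,h)\rangle=\xi_g(\sigma)\xi_h(\sigma^g)\overline{\xi_{gh}(\sigma)}$ from Theorem \ref{relation of Gw} with $\sigma=(1,0)$, so that $\sigma^g=(k,m)$. The three factors are
\[
\xi_g(\sigma)=\zeta_8^{-km},\qquad
\xi_h(k,m)=\zeta_8^{-(ack^2+bdm^2+2bckm)},\qquad
\overline{\xi_{gh}(\sigma)}=\zeta_8^{pr}.
\]
Hence $\langle(1,0),\eta(g,h)\rangle=\zeta_8^{E_1}$ with
\[
E_1=-km-ack^2-bdm^2-2bckm+pr.
\]
Since $\langle(1,0),\eta\rangle=\zeta_4^{\eta_1}=\zeta_8^{2\eta_1}$, this gives $\eta(g,h)_1=E_1/2 \pmod 4$. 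The substitution $\sigma=(0,1)$, with $\sigma^g=(l,n)$, gives the second component in the same way.

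\textbf{Main obstacle.} The delicate point is justifying that $E_1$ (and likewise $E_2$) is even, so that the division by $2$ makes sense. Abstractly this is forced: Theorem \ref{relation of Gw} guarantees that the left side $\langle\sigma,\eta\rangle$ is a character of $\widehat N$ with values in $\mu_4$, and the identity $\xi_g(\sigma)\xi_h(\sigma^g)\overline{\xi_{gh}(\sigma)}=\langle\sigma,\eta(g,h)\rangle$ holds for the $\xi$'s chosen in Step 2. To make this airtight, I would first verify that $\partial$ of the right-hand side is trivial, using $B_{gh}(\sigma,\tau)=B_g(\sigma,\tau)+B_h(\sigma^g,\tau^g)$ modulo $4$. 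The remaining subtlety is that a different choice of $\xi_g$ changes $\xi_g$ by a character of $\widehat N$, which alters $\eta$ only by a coboundary; this does not change the isomorphism class of $G^{\omega_1}$. I therefore fix the explicit choice of Step 2 throughout.
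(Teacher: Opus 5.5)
Your proposal is correct and follows the paper's route. The paper gives no separate argument here: it defers to the proof of Lemma \ref{eta2x2}, which computes the coboundary using the determinant condition, takes $\xi_g$ to be the exponential of half a quadratic form, and evaluates at $\sigma=(1,0)$ and $(0,1)$. For $N\simeq C_4^{\times2}$ this means replacing $\zeta_4$ by $\zeta_8$, exactly as you do, and your checks that $\xi_g$ is well defined on $\widehat N$ and that $E_1,E_2$ are even (via the trivial-coboundary/character argument) supply details the paper omits.

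One correction: $\xi_g$ is \emph{not} independent of the integer lifts of $k,l,m,n$. For example, $k\mapsto k+4$ multiplies $\xi_g$ by $\sigma\mapsto(-1)^{m\sigma_1}$, a nontrivial character when $m$ is odd. So the formula must be read with fixed lifts, such as those in $\{0,1,2,3\}$. This does not affect your argument, since you only need one fixed choice of $\xi_g$ depending on $gN$.
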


\urldef{\mygithuburlseven}\url{https://github.com/barley-tea-3416-cloud/isocategorical-group-structure/blob/1.0.1/Sourcecode.g#L854-L893}

Implementation details are provided in GaiHub repository \footnote{\mygithuburlseven}. 
The functions behave analogously to those implemented for the $C_2^{\times 2}$ case:

\begin{itemize}
	\item
	\texttt{Eta\_C4xC4(N, g, h)}: Computes the value of $\eta(g, h)$ based on the formula in Lemma \ref{eta4x4}.
	
	\item
	\texttt{AllGomegaRelationsC4xC4(G, N)}: Generates the complete set of relations to define $G^{\omega}$ as a quotient of a free group. 
\end{itemize}

\begin{itemize}[leftmargin=*]
	\item
	The case \(N\simeq C^{\times4}_2\)
\end{itemize}

\urldef{\mygithuburleight}\url{https://github.com/barley-tea-3416-cloud/isocategorical-group-structure/blob/1.0.1/Sourcecode.g#L1497-L1992}

For the case $N \cong C_2^{\times 4}$, the implementation must handle the $28$ distinct candidate non-degenerate 2-cocycles $\omega_k$ identified in Table \ref{nondeg 28 cocycles}.
To accommodate this variety, our GAP functions are designed to take the cocycle index $k$ as a parameter, allowing for a flexible investigation of the twisted group structures.
The computational procedures are detailed in GitHub repository\footnote{\mygithuburleight}.
The core functions are implemented as follows:

\begin{itemize}
	\item
	\texttt{Xi\_C2xC2xC2xC2(g, N, a, k)}: This function computes the 1-cochain value $\xi_g(a) \in \mathbb{T}$ for a given element $a \in N$ and the $k$-th cocycle. This step is necessary to solve the coboundary equation $\omega^g = \omega \cdot \partial \xi_g$.
	
	\item
	\texttt{Eta\_C2xC2xC2xC2(g, h, N, k)}: Using the values obtained from \texttt{Xi\_C2xC2xC2xC2}, this function evaluates the factor $\eta(g, h)$ by computing the ratio $\xi_g(\sigma) \xi_h(\sigma^g) / \xi_{gh}(\sigma)$ for each basis element of $N$.
	
	\item
	\texttt{AllGomegaRelationsC2xC2xC2xC2(G, N, k)}: This function generates the set of relations $f_i f_j = f_k$ for the twisted group $G^{\omega_k}$ by iterating over all $g, h \in G$ and applying the calculated factor $\eta(g, h)$.
	
	\item
	\texttt{FinalAllGomegaRelationsC2xC2xC2xC2(G, N, k)}: This function combines the product relations with the inherent structure relations of $N$ (e.g., $f_i^2 = 1$) to provide a complete set of relators for the construction of $G^{\omega_k}$ as a quotient of a free group.
\end{itemize}

Since we can now calculate the structures of the subgroups and the group $G^\omega$ for a given $\omega$, Problem \ref{prob2} is now settled.

\subsubsection{Comprehensive Isomorphism Search Across All Candidate Subgroups}

\urldef{\mygithuburlnine}\url{https://github.com/barley-tea-3416-cloud/isocategorical-group-structure/blob/1.0.1/Sourcecode.g#L1059-L1191}

To systematically identify isocategorical pairs, we implement a unified procedure that evaluates all suitable normal subgroups $N$ within a given group $G$, regardless of their specific isomorphism type. 
This automated pipeline is primarily handled by the GAP function \texttt{GwStructure(G)}. 
As detailed in GitHub repository \footnote{\mygithuburlnine}, the function operates through the following steps:
\begin{enumerate}
	\item 
	\textbf{Abelian Check}: It first verifies whether $G$ is abelian, as the twisted construction $G^\omega$ for abelian groups is well-understood.
	
	\item 
	\textbf{Subgroup Retrieval}: For non-abelian groups, it utilizes \texttt{ObtainedSubgroups(G)} to identify all normal subgroups $N$ isomorphic to $C_2^{\times 2}$, $C_4^{\times 2}$, or $C_2^{\times 4}$.
	
	\item 
	\textbf{Automated Evaluation}: For each identified subgroup $N$, the function automatically dispatches the appropriate subroutine, such as \texttt{GwStructureC2xC2(G, N)} or 
	\texttt{GwStructureC4xC4(G, N)}, to test for $G$-invariance and construct the twisted group $G^\omega$.
	
	\item 
	\textbf{Isomorphism and Classification}: Finally, it determines whether $G^\omega$ is isomorphic to $G$. 
	If they are non-isomorphic, the function identifies the SmallGroup ID of $G^\omega$ by comparing it with other groups in the same order category.
\end{enumerate}

\begin{rem}
	At the time \texttt{GwStructure(G)} was developed, a standard method for implementing twisted groups derived from $C_2^{\times4}$ had not yet been established. 
	Consequently, when the system detects $C_2^{\times4}$ as a subgroup of $G$, it only outputs the subgroup structure without further processing. 
	For these cases, we have developed a separate class, \texttt{GwStructureC2xC2xC2xC2(G, N)}, to handle implementation and verification independently.
\end{rem}

By applying this function to a collection of groups, we can exhaustively search for isocategorical but non-isomorphic pairs across a specific order.

\section{Isocategorical Classification Tables for Groups of Order 64}
\subsection{Notes on GAP}

The structural descriptions of finite groups in this paper follow the conventions used by the GAP computer algebra system, which are largely based on the ATLAS of Finite Groups. 
The symbols used to denote group extensions are summarized as follows:
\begin{itemize}[leftmargin=*]
	\item 
	$A \times B$ denotes the direct product of $A$ and $B$.
	
	\item 
	$A : B$ denotes a split extension of $A$ by $B$, which is isomorphic to a semi-direct product $A \rtimes B$. 
	In this case, $G$ contains a subgroup isomorphic to $B$ that complements $A$.
	
	\item 
	$A . B$ denotes a non-split extension of $A$ by $B$. 
	This represents a group $G$ that fits into a short exact sequence $1 \to A \to G \to B \to 1$, but unlike the split case, $G$ does not contain a subgroup isomorphic to $B$ that complements $A$.
\end{itemize}

Furthermore, in GAP, all groups of order 64 are generated by at most six elements; therefore, we identify each group in this paper using the multiplication table of its generators. 
As noted in Section 1, since the list of element orders is invariant between isocategorical groups, we organize the following descriptions into subsections grouped by these order lists.

\subsection{Example of Table}

We consider the group $\Gizumi$ as an example. 
The first table displays the product relations between the generators $\{f_i\}_{i=1}^{6}$ of $\Gizumi$.

\begin{table}[H]
	\begin{center}
		\caption{Group table for generators}
		\begin{tabular}{c||c|c|c|c|c|c|}
			& $f_1$ & $f_2$ & $f_3$ & $f_4$ & $f_5$ & $f_6$ \\ \hline\hline
			$f_1$ & $f_5$ & $f_1f_2$ & $f_1f_3$ & $f_1f_4$ & $f_1f_5$ & $f_1f_6$ \\ \hline
			$f_2$ & $f_1f_2f_5$ & $f_6$ & $f_2f_3$ & $f_2f_4$ & $f_2f_5$ & $f_2f_6$ \\ \hline
			$f_3$ & $f_1f_3f_6$ & $f_2f_3$ & $1$ & $f_3f_4$ & $f_3f_5$ & $f_3f_6$ \\ \hline
			$f_4$ & $f_1f_4$ & $f_2f_4f_5$ & $f_3f_4$ & $1$ & $f_4f_5$ & $f_4f_6$ \\ \hline
			$f_5$ & $f_1f_5$ & $f_2f_5$ & $f_3f_5$ & $f_4f_5$ & $1$ & $f_5f_6$ \\ \hline
			$f_6$ & $f_1f_6$ & $f_2f_6$ & $f_3f_6$ & $f_4f_6$ & $f_5f_6$ & $1$ \\ \hline
		\end{tabular}
	\end{center}
\end{table}

The second table describes the properties of the twisted groups $G^\omega$ constructed from 2-cocycles $\omega \in Z^2(\widehat{N}, \mathbb{T})$ for each normal subgroup $N$. 
Each column is defined as follows:

\begin{enumerate}[leftmargin=*, label=(\arabic*)]
	\item 
	\textbf{Subgroup}: Specifies the generators of the normal subgroup $N$. 
	For example, if $N \cong C_2^2$, the entry $[f_3, f_6]$ identifies $(1,0) \mapsto f_3$ and $(0,1) \mapsto f_6$.
	
	\item 
	\textbf{Structure of $N$}: The isomorphism type of $N$, such as $C_2^2$, $C_4^2$, or $C_2^4$.
	
	\item 
	\textbf{$G$-inv}: The $G$-invariance of the 2-cocycle $\omega \in Z^2(\widehat{N}, \mathbb{T})$. 
	For $N \cong C_2^2$ or $C_4^2$, $\bigcirc$ and $\times$ denote invariance and non-invariance, respectively. 
	For $N \cong C_2^4$, the indices of invariant cocycles from Table \ref{nondeg 28 cocycles} are listed.
	
	\item 
	\textbf{$G \cong G^\omega$}: Isomorphism between $G$ and the twisted group $G^\omega$. $\bigcirc$ indicates $G \cong G^\omega$, while $\times$ indicates $G \not\cong G^\omega$.
	
	\item 
	\textbf{Number of $G^\omega$}: The index of $G^\omega$ among groups of order 64 with the same order list, as classified by the \texttt{SmallCategory} function.
\end{enumerate}

\begin{table}[H]
	\begin{center}
		\caption{Candidate subgroups and the resulting twisted groups $G^\omega$}
		\begin{tabular}{|c|c|c|c|c|}
			Subgroup & Structure of \(N\) & \(G\)-inv & \(G^\omega\simeq G\) & Number of \(G^\omega\)  \\ \hline
			
			$[ f_3, f_6 ]$ & $C_2\times C_2$ & $\bigcirc$ & $\bigcirc$ & 5 \\ \hline
			$[ f_4f_6, f_5 ]$ & $C_2\times C_2$ & $\bigcirc$ & $\bigcirc$ & 5 \\ \hline
			$[ f_4, f_5 ]$ & $C_2\times C_2$ & $\bigcirc$ & $\bigcirc$ & 5 \\ \hline
			$[ f_5, f_6 ]$ & $C_2\times C_2$ & $\bigcirc$ & $\bigcirc$ & 5 \\ \hline
			$[ f_3f_5, f_6 ]$ & $C_2\times C_2$ & $\bigcirc$ & $\bigcirc$ & 5 \\ \hline
			$[ f_5, f_6, f_3, f_4 ]$ & $C_2\times C_2\times C_2\times C_2$ & 1,2  & $\bigcirc$  & 5  \\ \hline $[ f_1f_4, f_2 ]$ & $C_4\times C_4$ & $\bigcirc$ & $\times$ &
			7 \\ \hline
			$[ f_1f_3f_4f_6, f_2 ]$ & $C_4\times C_4$ & $\times$ & - & - \\ \hline
			$[ f_1f_2f_6, f_1f_4f_5f_6 ]$ & $C_4\times C_4$ & $\times$ & - & - \\ \hline
		\end{tabular}
	\end{center}
\end{table}
 
For instance:
\begin{itemize}[leftmargin=*]
	\item 
	The row for $N \cong C_2^2$ generated by $f_3$ and $f_6$ represents a case where the 2-cocycle is $G$-invariant, but the twisted group $G^\omega$ remains isomorphic to $G$.
	
	\item 
	The row for $N \cong C_2^4$ generated by $[f_5, f_6, f_3, f_4]$ represents a case where the $G$-invariant 2-cocycles (indices 1 and 2 from Table \ref{nondeg 28 cocycles}) result in $G^\omega$ being isomorphic to $G$.
	
	\item 
	The row for $N \cong C_4^2$ generated by $f_1f_4$ and $f_2$ represents a case where the 2-cocycle is $G$-invariant, yet $G^\omega$ is non-isomorphic to $G$. This $G^\omega$ is identified as group number 7 among those with the same order list.
\end{itemize}

\subsection{Order list \([1,1,2,4,8,16,32]\)}
\subsubsection{\(C_{64}\)}
As the group is abelian, no non-isomorphic groups exist that are isocategorical to it.

\subsection{Order list \([1,1,34,4,8,16,0]\)}
\subsubsection{\(Q_{64}\)}
Since only one group belongs to this order list, there are no non-isomorphic groups that are monoidally equivalent to it.

\subsection{Order list \([1,3,4,8,16,32,0]\)}
\subsubsection{\(C_{32}\times C_2\)}
As the group is abelian, no non-isomorphic groups exist that are isocategorical to it.
\subsubsection{\(C_{32}:C_2\)}
There are exactly two groups belonging to this order list. 
Since one of them is abelian, no non-isomorphic groups exist that are monoidally equivalent to either group.

\subsection{Order list \([1,3,4,40,16,0,0]\)}
\subsubsection{\(C_4.D_{16}=C_8.(C4\times C_2)\)}
Since only one group belongs to this order list, there are no non-isomorphic groups that are monoidally equivalent to it.

\subsection{Order list \([1,3,12,16,32,0,0]\)}
\subsubsection{\(C_{16}\times C_4\)}
As the group is abelian, no non-isomorphic groups exist that are isocategorical to it.
\subsubsection{\(C_{16}: C_4\)}No group isocategorical to this group exists.
\begin{table}[H]
	\begin{center}
		\caption{Group table for generators}

	\end{center}
\end{table}

\begin{table}[H]
	\begin{center}
		\caption{Candidate subgroups and the resulting twisted groups $G^\omega$}
		%
	\end{center}
\end{table}
\subsubsection{\(C_{16}: C_4\)}No group isocategorical to this group exists.
\begin{table}[H]
	\begin{center}
		\caption{Group table for generators}
		%
	\end{center}
\end{table}

\begin{table}[H]
	\begin{center}
		\caption{Candidate subgroups and the resulting twisted groups $G^\omega$}
		%
	\end{center}
\end{table}
\subsubsection{\(C_4: C_{16}\)}
\begin{table}[H]
	\begin{center}
		\caption{Group table for generators}
		%
	\end{center}
\end{table}

\begin{table}[H]
	\begin{center}
		\caption{Candidate subgroups and the resulting twisted groups $G^\omega$}
		%
	\end{center}
\end{table}
\subsubsection{\(C_8.D_8=C_4.(C_8\times C_2)\)}No group isocategorical to this group exists.
\begin{table}[H]
	\begin{center}
		\caption{Group table for generators}
		%
	\end{center}
\end{table}

\subsection{Order list \([1,3,12,48,0,0,0]\)}
\subsubsection{\(C_8\times C_8\)}
As the group is abelian, no non-isomorphic groups exist that are isocategorical to it.
\subsubsection{\(C_8 :C_8\)}No group isocategorical to this group exists.
\begin{table}[H]
	\begin{center}
		\caption{Group table for generators}
		%
	\end{center}
\end{table}
\subsubsection{\(C_8 :C_8\)}No group isocategorical to this group exists.
\begin{table}[H]
	\begin{center}
		\caption{Group table for generators}
		%
	\end{center}
\end{table}
\subsubsection{\(C_8 :C_8\)}No group isocategorical to this group exists.
\begin{table}[H]
	\begin{center}
		\caption{Group table for generators}
		%
	\end{center}
\end{table}

\subsection{Order list \([1,3,20,24,16,0,0]\)}
\subsubsection{\(C_2 . ((C_8 \times C_2) : C_2) = C_8 . (C_4 \times C_2)\)}No group isocategorical to this group exists.
\begin{table}[H]
	\begin{center}
		\caption{Group table for generators}
		%
	\end{center}
\end{table}
\subsubsection{\(C_{16} : C_4\)}No group isocategorical to this group exists.
\begin{table}[H]
	\begin{center}
		\caption{Group table for generators}
		%
	\end{center}
\end{table}

\subsection{Order list \([1,3,28,32,0,0,0]\)}
\subsubsection{\(Q_8 : C_8\)}No group isocategorical to this group exists.
\begin{table}[H]
	\begin{center}
		\caption{Group table for generators}
		%
	\end{center}
\end{table}
\subsubsection{\((C_2 \times C_2) . ((C_4 \times C_2) : C_2) = (C_4 \times C_2) . (C_4 \times C_2)\)}No group isocategorical to this group exists.
\begin{table}[H]
	\begin{center}
		\caption{Group table for generators}
		%
	\end{center}
\end{table}
\subsubsection{\((C_2 \times C_2) . ((C_4 \times C_2) : C_2) = (C_4 \times C_2) . (C_4 \times C_2)\)}No group isocategorical to this group exists.
\begin{table}[H]
	\begin{center}
		\caption{Group table for generators}
		%
	\end{center}
\end{table}
\subsubsection{\((C_2 \times C_2) . ((C_4 \times C_2) : C_2) = (C_4 \times C_2) . (C_4 \times C_2)\)}No group isocategorical to this group exists.
\begin{table}[H]
	\begin{center}
		\caption{Group table for generators}
		%
	\end{center}
\end{table}
\subsubsection{\(C_2 . ((C_2 \times C_2 \times C_2) : C_4) = (C_4 \times C_2) . (C_4 \times C_2)\)}No group isocategorical to this group exists.
\begin{table}[H]
	\begin{center}
		\caption{Group table for generators}
		%
	\end{center}
\end{table}
\subsubsection{\(C_8 \times Q_8\)}No group isocategorical to this group exists.
\begin{table}[H]
	\begin{center}
		\caption{Group table for generators}
		%
	\end{center}
\end{table}
\subsubsection{\(C_8 : Q_8\)}No group isocategorical to this group exists.
\begin{table}[H]
	\begin{center}
		\caption{Group table for generators}
		%
	\end{center}
\end{table}

\subsection{Order list \([1,3,36,8,16,0,0]\)}
\subsubsection{\(Q_{16} : C_4\)}No group isocategorical to this group exists.
\begin{table}[H]
	\begin{center}
		\caption{Group table for generators}
		%
	\end{center}
\end{table}
\subsubsection{\(C_{16} : C_4\)}No group isocategorical to this group exists.
\begin{table}[H]
	\begin{center}
		\caption{Group table for generators}
		%
	\end{center}
\end{table}
\subsubsection{\(C_{16} : C_4\)}No group isocategorical to this group exists.
\begin{table}[H]
	\begin{center}
		\caption{Group table for generators}
		%
	\end{center}
\end{table}
\subsubsection{\(C_2 \times Q_{32}\)}
\begin{table}[H]
	\begin{center}
		\caption{Group table for generators}
		%
	\end{center}
\end{table}

\subsection{Order list \([1,3,44,16,0,0,0]\)}
\subsubsection{\(C_4 \times Q_{16}\)}
\begin{table}[H]
	\begin{center}
		\caption{Group table for generators}
		%
	\end{center}
\end{table}
\subsubsection{\(Q_{16} : C_4\)}No group isocategorical to this group exists.
\begin{table}[H]
	\begin{center}
		\caption{Group table for generators}
		%
	\end{center}
\end{table}
\subsubsection{\(C_4 : Q_{16}\)}
\begin{table}[H]
	\begin{center}
		\caption{Group table for generators}
		%
	\end{center}
\end{table}
\subsubsection{\(Q_8 : Q_8\)}No group isocategorical to this group exists.
\begin{table}[H]
	\begin{center}
		\caption{Group table for generators}
		%
	\end{center}
\end{table}
\subsubsection{\(Q_8 : Q_8\)}No group isocategorical to this group exists.
\begin{table}[H]
	\begin{center}
		\caption{Group table for generators}
		%
	\end{center}
\end{table}
\subsubsection{\((C_2 \times C_2) . (C_2 \times D_8) = (C_4 \times C_2) . (C_2 \times C_2 \times C_2)\)}No group isocategorical to this group exists.
\begin{table}[H]
	\begin{center}
		\caption{Group table for generators}
		%
	\end{center}
\end{table}
\subsubsection{\((C_2 \times C_2) . (C_2 \times D_8) = (C_4 \times C_2) . (C_2 \times C_2 \times C_2)\)}No group isocategorical to this group exists.
\begin{table}[H]
	\begin{center}
		\caption{Group table for generators}
		%
	\end{center}
\end{table}
\subsubsection{\((C_2 \times C_2) . (C_2 \times D_8) = (C_4 \times C_2) . (C_2 \times C_2 \times C_2)\)}No group isocategorical to this group exists.
\begin{table}[H]
	\begin{center}
		\caption{Group table for generators}
		%
	\end{center}
\end{table}
\subsubsection{\(C4 : Q16\)}No group isocategorical to this group exists.
\begin{table}[H]
	\begin{center}
		\caption{Group table for generators}
		%
	\end{center}
\end{table}
\subsubsection{\(C_8 : Q_8\)}No group isocategorical to this group exists.
\begin{table}[H]
	\begin{center}
		\caption{Group table for generators}
		%
	\end{center}
\end{table}
\subsubsection{\((C_2 \times C_2) . (C_2 \times D_8) = (C_4 \times C_2) . (C_2 \times C_2 \times C_2)\)}No group isocategorical to this group exists.
\begin{table}[H]
	\begin{center}
		\caption{Group table for generators}
		%
	\end{center}
\end{table}
\subsubsection{\(C_8 : Q_8\)}No group isocategorical to this group exists.
\begin{table}[H]
	\begin{center}
		\caption{Group table for generators}
		%
	\end{center}
\end{table}
\subsubsection{\(C_8 : Q_8\)}No group isocategorical to this group exists.
\begin{table}[H]
	\begin{center}
		\caption{Group table for generators}
		%
	\end{center}
\end{table}

\subsection{Order list \([1,3,60,0,0,0,0]\)}
\subsubsection{\(Q_8 : Q_8\)}No group isocategorical to this group exists.
\begin{table}[H]
	\begin{center}
		\caption{Group table for generators}
		%
	\end{center}
\end{table}
\subsubsection{\(Q_8\times Q_8\)}No group isocategorical to this group exists.
\begin{table}[H]
	\begin{center}
		\caption{Group table for generators}
		%
	\end{center}
\end{table}

\subsubsection{\((C_2 \times C_2 ) . (C_2\times C_2\times C_2\times C_2)\)}No group isocategorical to this group exists.
\begin{table}[H]
	\begin{center}
		\caption{Group table for generators}
		%
	\end{center}
\end{table}

\subsection{Order list \([1,7,8,16,32,0,0]\)}
\subsubsection{\((C_{16}\times C_2):C_2\)}No group isocategorical to this group exists.
\begin{table}[H]
	\begin{center}
		\caption{Group table for generators}
		%
	\end{center}
\end{table}

\subsubsection{\((C_{16}:C_2):C_2\)}No group isocategorical to this group exists.
\begin{table}[H]
	\begin{center}
		\caption{Group table for generators}
		%
	\end{center}
\end{table}

\subsubsection{\((C_{16}:C_2):C_2\)}No group isocategorical to this group exists.
\begin{table}[H]
	\begin{center}
		\caption{Group table for generators}
		%
	\end{center}
\end{table}

\subsubsection{\(C_{16}\times C_2\times C_2\)}No group isocategorical to this group exists.
As the group is abelian, no non-isomorphic groups exist that are isocategorical to it.

\subsubsection{\(C_2\times (C_{16}:C_2)\)}No group isocategorical to this group exists.
\begin{table}[H]
	\begin{center}
		\caption{Group table for generators}
		%
	\end{center}
\end{table}

\subsubsection{\((C_{16}\times C_2):C_2\)}No group isocategorical to this group exists.
\begin{table}[H]
	\begin{center}
		\caption{Group table for generators}
		%
	\end{center}
\end{table}

\subsection{Order list \([1,7,8,48,0,0,0]\)}
\subsubsection{\(C_4 . (C_4 \times C_4)\)}No group isocategorical to this group exists.
\begin{table}[H]
	\begin{center}
		\caption{Group table for generators}
		%
	\end{center}
\end{table}

\subsubsection{\((C_4 \times C_2) . D_8 = C_4 . (C_4 \times C_4)\)}No group isocategorical to this group exists.
\begin{table}[H]
	\begin{center}
		\caption{Group table for generators}
		%
	\end{center}
\end{table}

\subsubsection{\(C_2 \times (C_4 . D_8 = C_4 . (C_4 \times C_2))\)}No group isocategorical to this group exists.
\begin{table}[H]
	\begin{center}
		\caption{Group table for generators}
		%
	\end{center}
\end{table}

\subsubsection{\((C_4 . D_8 = C_4 . (C_4 \times C_2)) : C_2\)}No group isocategorical to this group exists.
\begin{table}[H]
	\begin{center}
		\caption{Group table for generators}
		%
	\end{center}
\end{table}

\subsection{Order list \([1,7,24,32,0,0,0]\)}
\subsubsection{\((C_4 \times C_2) : C_8\)}No group isocategorical to this group exists.
\begin{table}[H]
	\begin{center}
		\caption{Group table for generators}
		%
	\end{center}
\end{table}

\subsubsection{\((C_8 \times C_2) : C_4\)}No group isocategorical to this group exists.
\begin{table}[H]
	\begin{center}
		\caption{Group table for generators}
		%
	\end{center}
\end{table}

\subsubsection{\((C_8 : C_2) : C_4\)}No group isocategorical to this group exists.
\begin{table}[H]
	\begin{center}
		\caption{Group table for generators}
		%
	\end{center}
\end{table}

\subsubsection{\((C_8 \times C_2) : C_4\)}No group isocategorical to this group exists.
\begin{table}[H]
	\begin{center}
		\caption{Group table for generators}
		%
	\end{center}
\end{table}

\subsubsection{\(C_8 \times C_4 \times C_2\)}No group isocategorical to this group exists.
\begin{table}[H]
	\begin{center}
		\caption{Group table for generators}
		%
	\end{center}
\end{table}

As the group is abelian, no non-isomorphic groups exist that are isocategorical to it.

\subsubsection{\(C_2 \times (C_8 : C_4)\)}No group isocategorical to this group exists.
\begin{table}[H]
	\begin{center}
		\caption{Group table for generators}
		%
	\end{center}
\end{table}

\subsubsection{\(C_4 \times (C_8 : C_2)\)}No group isocategorical to this group exists.
\begin{table}[H]
	\begin{center}
		\caption{Group table for generators}
		%
	\end{center}
\end{table}

\subsubsection{\((C_8 \times C_4) : C_2\)}No group isocategorical to this group exists.
\begin{table}[H]
	\begin{center}
		\caption{Group table for generators}
		%
	\end{center}
\end{table}

\subsubsection{\(C_2 \times (C_2 . ((C_4 \times C_2) : C_2) = (C_2 \times C_2) . (C_4 \times C_2))\)}No group isocategorical to this group exists.
\begin{table}[H]
	\begin{center}
		\caption{Group table for generators}
		%
	\end{center}
\end{table}

\subsubsection{\(C_2 \times (C_4 : C_8)\)}No group isocategorical to this group exists.
\begin{table}[H]
	\begin{center}
		\caption{Group table for generators}
		%
	\end{center}
\end{table}

\subsubsection{\((C_4 : C_8) : C_2\)}No group isocategorical to this group exists.
\begin{table}[H]
	\begin{center}
		\caption{Group table for generators}
		%
	\end{center}
\end{table}

\subsubsection{\((C_4 : C_8) : C_2\)}No group isocategorical to this group exists.
\begin{table}[H]
	\begin{center}
		\caption{Group table for generators}
		%
	\end{center}
\end{table}

\subsubsection{\((C_8 \times C_4) : C_2\)}No group isocategorical to this group exists.
\begin{table}[H]
	\begin{center}
		\caption{Group table for generators}
		%
	\end{center}
\end{table}

\subsubsection{\((C_8 : C_4) : C_2\)}No group isocategorical to this group exists.
\begin{table}[H]
	\begin{center}
		\caption{Group table for generators}
		%
	\end{center}
\end{table}

\subsubsection{\((C_8 \times C_4) : C_2\)}No group isocategorical to this group exists.
\begin{table}[H]
	\begin{center}
		\caption{Group table for generators}
		%
	\end{center}
\end{table}

\subsubsection{\((C_2 . ((C_4 \times C_2) : C_2) = (C_2 \times C_2) . (C_4 \times C_2)) : C_2\)}No group isocategorical to this group exists.
\begin{table}[H]
	\begin{center}
		\caption{Group table for generators}
		%
	\end{center}
\end{table}

\subsection{Order list \([1,7,40,16,0,0,0]\)}
\subsubsection{\((C_4 : C_4) : C_4\)}No group isocategorical to this group exists.
\begin{table}[H]
	\begin{center}
		\caption{Group table for generators}
		%
	\end{center}
\end{table}

\subsubsection{\((C_4 \times C_4) : C_4\)}No group isocategorical to this group exists.
\begin{table}[H]
	\begin{center}
		\caption{Group table for generators}
		%
	\end{center}
\end{table}

\subsubsection{\((C_4 : C_4) : C_4\)}No group isocategorical to this group exists.
\begin{table}[H]
	\begin{center}
		\caption{Group table for generators}
		%
	\end{center}
\end{table}

\subsubsection{\((C_4 : C_4) : C_4\)}No group isocategorical to this group exists.
\begin{table}[H]
	\begin{center}
		\caption{Group table for generators}
		%
	\end{center}
\end{table}

\subsubsection{\(C_2 \times (Q_8 : C_4)\)}No group isocategorical to this group exists.
\begin{table}[H]
	\begin{center}
		\caption{Group table for generators}
		%
	\end{center}
\end{table}

\subsubsection{\((Q_8 : C_4) : C_2\)}No group isocategorical to this group exists.
\begin{table}[H]
	\begin{center}
		\caption{Group table for generators}
		%
	\end{center}
\end{table}

\subsubsection{\(C2 \times (C_8 : C_4)\)}No group isocategorical to this group exists.
\begin{table}[H]
	\begin{center}
		\caption{Group table for generators}
		%
	\end{center}
\end{table}

\subsubsection{\(C_2 \times (C_8 : C_4)\)}No group isocategorical to this group exists.
\begin{table}[H]
	\begin{center}
		\caption{Group table for generators}
		%
	\end{center}
\end{table}

\subsubsection{\((C_8 : C_4) : C_2\)}No group isocategorical to this group exists.
\begin{table}[H]
	\begin{center}
		\caption{Group table for generators}
		%
	\end{center}
\end{table}

\subsubsection{\((C_8 : C_4) : C_2\)}No group isocategorical to this group exists.
\begin{table}[H]
	\begin{center}
		\caption{Group table for generators}
		%
	\end{center}
\end{table}

\subsubsection{\((C_2 \times Q_{16}) : C_2\)}No group isocategorical to this group exists.
\begin{table}[H]
	\begin{center}
		\caption{Group table for generators}
		%
	\end{center}
\end{table}

\subsubsection{\((Q_8 : C_4) : C_2\)}No group isocategorical to this group exists.
\begin{table}[H]
	\begin{center}
		\caption{Group table for generators}
		%
	\end{center}
\end{table}

\subsubsection{\((Q_8 : C_4) : C_2\)}No group isocategorical to this group exists.
\begin{table}[H]
	\begin{center}
		\caption{Group table for generators}
		%
	\end{center}
\end{table}

\subsubsection{\((Q_8 : C_4) : C_2\)}No group isocategorical to this group exists.
\begin{table}[H]
	\begin{center}
		\caption{Group table for generators}
		%
	\end{center}
\end{table}

\subsubsection{\((Q_8 : C_4) : C_2\)}No group isocategorical to this group exists.
\begin{table}[H]
	\begin{center}
		\caption{Group table for generators}
		%
	\end{center}
\end{table}

\subsubsection{\((Q_8 : C_4) : C_2\)}No group isocategorical to this group exists.
\begin{table}[H]
	\begin{center}
		\caption{Group table for generators}
		%
	\end{center}
\end{table}

\subsubsection{\(C_2 \times C_2 \times Q_{16}\)}No group isocategorical to this group exists.
\begin{table}[H]
	\begin{center}
		\caption{Group table for generators}
		%
	\end{center}
\end{table}

\subsection{Order list \([1,7,56,0,0,0,0]\)}
\subsubsection{\(C_4 \times C_4 \times C_4\)}
As the group is abelian, no non-isomorphic groups exist that are isocategorical to it.
\subsubsection{\((C_4 \times C_4) : C_4\)}No group isocategorical to this group exists.
\begin{table}[H]
	\begin{center}
		\caption{Group table for generators}
		%
	\end{center}
\end{table}
\subsubsection{\(C_4 \times (C_4 : C_4)\)}No group isocategorical to this group exists.
\begin{table}[H]
	\begin{center}
		\caption{Group table for generators}
		%
	\end{center}
\end{table}
\subsubsection{\((C_4 \times C_4) : C_4\)}No group isocategorical to this group exists.
\begin{table}[H]
	\begin{center}
		\caption{Group table for generators}
		%
	\end{center}
\end{table}
\subsubsection{\((C_4 \times C_4) : C_4\)}No group isocategorical to this group exists.
\begin{table}[H]
	\begin{center}
		\caption{Group table for generators}
		%
	\end{center}
\end{table}
\subsubsection{\((C_4 \times C_4) : C_4\)}No group isocategorical to this group exists.
\begin{table}[H]
	\begin{center}
		\caption{Group table for generators}
		%
	\end{center}
\end{table}
\subsubsection{\((C_4 : C_4) : C_4\)}No group isocategorical to this group exists.
\begin{table}[H]
	\begin{center}
		\caption{Group table for generators}
		%
	\end{center}
\end{table}
\subsubsection{\((C_4 : C_4) : C_4\)}No group isocategorical to this group exists.
\begin{table}[H]
	\begin{center}
		\caption{Group table for generators}
		%
	\end{center}
\end{table}
\subsubsection{\((C_2 \times Q_8) : C_4\)}No group isocategorical to this group exists.
\begin{table}[H]
	\begin{center}
		\caption{Group table for generators}
		%
	\end{center}
\end{table}
\subsubsection{\((C_4 \times C_2) : Q_8\)}No group isocategorical to this group exists.
\begin{table}[H]
	\begin{center}
		\caption{Group table for generators}
		%
	\end{center}
\end{table}
\subsubsection{\((C_2 \times C_2 \times C_2) . (C_2 \times C_2 \times C_2)\)}No group isocategorical to this group exists.
\begin{table}[H]
	\begin{center}
		\caption{Group table for generators}
		%
	\end{center}
\end{table}
\subsubsection{\((C_2 \times C_2 \times C_2) . (C_2 \times C_2 \times C_2)\)}No group isocategorical to this group exists.
\begin{table}[H]
	\begin{center}
		\caption{Group table for generators}
		%
	\end{center}
\end{table}
\subsubsection{\((C_2 \times C_2 \times C_2) . (C_2 \times C_2 \times C_2)\)}No group isocategorical to this group exists.
\begin{table}[H]
	\begin{center}
		\caption{Group table for generators}
		%
	\end{center}
\end{table}
\subsubsection{\(C_2 \times C_4 \times Q_8\)}No group isocategorical to this group exists.
\begin{table}[H]
	\begin{center}
		\caption{Group table for generators}
		%
	\end{center}
\end{table}
\subsubsection{\((C_4 \times Q_8) : C_2\)}No group isocategorical to this group exists.
\begin{table}[H]
	\begin{center}
		\caption{Group table for generators}
		%
	\end{center}
\end{table}
\subsubsection{\(C_2 \times ((C_2 \times C_2) . (C_2 \times C_2 \times C_2))\)}No group isocategorical to this group exists.
\begin{table}[H]
	\begin{center}
		\caption{Group table for generators}
		%
	\end{center}
\end{table}
\subsubsection{\(C_2 \times (C_4 : Q_8)\)}No group isocategorical to this group exists.
\begin{table}[H]
	\begin{center}
		\caption{Group table for generators}
		%
	\end{center}
\end{table}
\subsubsection{\((C_4 \times Q_8) : C_2\)}No group isocategorical to this group exists.
\begin{table}[H]
	\begin{center}
		\caption{Group table for generators}
		%
	\end{center}
\end{table}
\subsubsection{\((C_4 \times Q_8) : C_2\)}No group isocategorical to this group exists.
\begin{table}[H]
	\begin{center}
		\caption{Group table for generators}
		%
	\end{center}
\end{table}
\subsubsection{\((C_4 : Q_8) : C_2\)}No group isocategorical to this group exists.
\begin{table}[H]
	\begin{center}
		\caption{Group table for generators}
		%
	\end{center}
\end{table}

\subsection{Order list \([1,11,12,24,16,0,0]\)}
\subsubsection{\((C_{16}\times C_2):C_2\)}No group isocategorical to this group exists.
Since only one group belongs to this order list, there are no non-isomorphic groups that are monoidally equivalent to it.

\subsection{Order list \([1,11,20,32,0,0,0]\)}
\subsubsection{\((C_8 \times C_4) : C_2\)}No group isocategorical to this group exists.
\begin{table}[H]
	\begin{center}
		\caption{Group table for generators}
		%
	\end{center}
\end{table}
\subsubsection{\((C_8 : C_4) : C_2\)}No group isocategorical to this group exists.
\begin{table}[H]
	\begin{center}
		\caption{Group table for generators}
		%
	\end{center}
\end{table}
\subsubsection{\((C_2 . ((C_4 \times C_2) : C_2) = (C_2 \times C_2) . (C_4 \times C_2)) : C_2\)}No group isocategorical to this group exists.
\begin{table}[H]
	\begin{center}
		\caption{Group table for generators}
		%
	\end{center}
\end{table}
\subsubsection{\(C_8 \times D_8\)}No group isocategorical to this group exists.
\begin{table}[H]
	\begin{center}
		\caption{Group table for generators}
		%
	\end{center}
\end{table}
\subsubsection{\((C_2 \times (C_8 : C_2)) : C_2\)}No group isocategorical to this group exists.
\begin{table}[H]
	\begin{center}
		\caption{Group table for generators}
		%
	\end{center}
\end{table}
\subsubsection{\((C_2 \times (C_8 : C_2)) : C_2\)}No group isocategorical to this group exists.
\begin{table}[H]
	\begin{center}
		\caption{Group table for generators}
		%
	\end{center}
\end{table}
\subsubsection{\(((C_8 \times C_2) : C_2) : C_2\)}No group isocategorical to this group exists.
\begin{table}[H]
	\begin{center}
		\caption{Group table for generators}
		%
	\end{center}
\end{table}
\subsubsection{\(((C_8 \times C_2) : C_2) : C_2\)}No group isocategorical to this group exists.
\begin{table}[H]
	\begin{center}
		\caption{Group table for generators}
		%
	\end{center}
\end{table}

\subsection{Order list \([1,11,28,8,16,0,0]\)}
\subsubsection{\((C_{16} : C_2) : C_2\)}No group isocategorical to this group exists.
\begin{table}[H]
	\begin{center}
		\caption{Group table for generators}
		%
	\end{center}
\end{table}
\subsubsection{\(QD_{32} : C_2\)}No group isocategorical to this group exists.
\begin{table}[H]
	\begin{center}
		\caption{Group table for generators}
		%
	\end{center}
\end{table}

\subsection{Order list \([1,11,36,16,0,0,0]\)}
\subsubsection{\((C_4 \times C_2 \times C_2) : C_4\)}No group isocategorical to this group exists.
\begin{table}[H]
	\begin{center}
		\caption{Group table for generators}
		%
	\end{center}
\end{table}
\subsubsection{\(C_4 \times QD_{16}\)}
\begin{table}[H]
	\begin{center}
		\caption{Group table for generators}
		%
	\end{center}
\end{table}
\subsubsection{\((C_4 \times Q_8) : C_2\)}No group isocategorical to this group exists.
\begin{table}[H]
	\begin{center}
		\caption{Group table for generators}
		%
	\end{center}
\end{table}
\subsubsection{\((C_4 : Q_8) : C_2\)}No group isocategorical to this group exists.
\begin{table}[H]
	\begin{center}
		\caption{Group table for generators}
		%
	\end{center}
\end{table}
\subsubsection{\((Q_8 : C_4) : C_2\)}No group isocategorical to this group exists.
\begin{table}[H]
	\begin{center}
		\caption{Group table for generators}
		%
	\end{center}
\end{table}
\subsubsection{\((C_4 \times Q_8) : C_2\)}No group isocategorical to this group exists.
\begin{table}[H]
	\begin{center}
		\caption{Group table for generators}
		%
	\end{center}
\end{table}
\subsubsection{\((C_8 : C_4) : C_2\)}No group isocategorical to this group exists.
\begin{table}[H]
	\begin{center}
		\caption{Group table for generators}
		%
	\end{center}
\end{table}
\subsubsection{\((C_8 : C_4) : C_2\)}No group isocategorical to this group exists.
\begin{table}[H]
	\begin{center}
		\caption{Group table for generators}
		%
	\end{center}
\end{table}
\subsubsection{\((C_8 : C_4) : C_2\)}No group isocategorical to this group exists.
\begin{table}[H]
	\begin{center}
		\caption{Group table for generators}
		%
	\end{center}
\end{table}
\subsubsection{\((C_8 \times C_4) : C_2\)}No group isocategorical to this group exists.
\begin{table}[H]
	\begin{center}
		\caption{Group table for generators}
		%
	\end{center}
\end{table}
\subsubsection{\((C_8 : C_4) : C_2\)}No group isocategorical to this group exists.
\begin{table}[H]
	\begin{center}
		\caption{Group table for generators}
		%
	\end{center}
\end{table}
\subsubsection{\((C_4 : Q_8) : C_2\)}No group isocategorical to this group exists.
\begin{table}[H]
	\begin{center}
		\caption{Group table for generators}
		%
	\end{center}
\end{table}

\subsection{Order list \([1,11,52,0,0,0,0]\)}
\subsubsection{\((C_2 \times Q_8) : C_4\)}No group isocategorical to this group exists.
\begin{table}[H]
	\begin{center}
		\caption{Group table for generators}
		%
	\end{center}
\end{table}
\subsubsection{\(Q_8 \times D_8\)}No group isocategorical to this group exists.
\begin{table}[H]
	\begin{center}
		\caption{Group table for generators}
		%
	\end{center}
\end{table}
\subsubsection{\(((C_4 \times C_4) : C_2) : C_2\)}No group isocategorical to this group exists.
\begin{table}[H]
	\begin{center}
		\caption{Group table for generators}
		%
	\end{center}
\end{table}
\subsubsection{\((C_2 \times (C_4 : C_4)) : C_2\)}No group isocategorical to this group exists.
\begin{table}[H]
	\begin{center}
		\caption{Group table for generators}
		%
	\end{center}
\end{table}
\subsubsection{\(((C_4 \times C_4) : C_2) : C_2\)}No group isocategorical to this group exists.
\begin{table}[H]
	\begin{center}
		\caption{Group table for generators}
		%
	\end{center}
\end{table}
\subsubsection{\((C_4 : Q_8) : C_2\)}No group isocategorical to this group exists.
\begin{table}[H]
	\begin{center}
		\caption{Group table for generators}
		%
	\end{center}
\end{table}

\subsection{Order list \([1,15,16,32,0,0,0]\)}
\subsubsection{\(((C_8 \times C_2) : C_2) : C_2\)}No group isocategorical to this group exists.
\begin{table}[H]
	\begin{center}
		\caption{Group table for generators}
		%
	\end{center}
\end{table}
\subsubsection{\(C_2 \times ((C_8 \times C_2) : C_2)\)}No group isocategorical to this group exists.
\begin{table}[H]
	\begin{center}
		\caption{Group table for generators}
		%
	\end{center}
\end{table}
\subsubsection{\((C_2 \times (C_8 : C_2)) : C_2\)}No group isocategorical to this group exists.
\begin{table}[H]
	\begin{center}
		\caption{Group table for generators}
		%
	\end{center}
\end{table}
\subsubsection{\((C_8 \times C_2 \times C_2) : C_2\)}No group isocategorical to this group exists.
\begin{table}[H]
	\begin{center}
		\caption{Group table for generators}
		%
	\end{center}
\end{table}
\subsubsection{\((C_2 \times (C_8 : C_2)) : C_2\)}No group isocategorical to this group exists.
\begin{table}[H]
	\begin{center}
		\caption{Group table for generators}
		%
	\end{center}
\end{table}
\subsubsection{\(((C_8 : C_2) : C_2) : C_2\)}No group isocategorical to this group exists.
\begin{table}[H]
	\begin{center}
		\caption{Group table for generators}
		%
	\end{center}
\end{table}
\subsubsection{\(C_8 \times C_2 \times C_2 \times C_2\)}No group isocategorical to this group exists.
As the group is abelian, no non-isomorphic groups exist that are isocategorical to it.
\subsubsection{\(C_2 \times C_2 \times (C_8 : C_2)\)}No group isocategorical to this group exists.
\begin{table}[H]
	\begin{center}
		\caption{Group table for generators}
		%
	\end{center}
\end{table}
\subsubsection{\(C_2 \times ((C_8 \times C_2) : C_2)\)}No group isocategorical to this group exists.
\begin{table}[H]
	\begin{center}
		\caption{Group table for generators}
		%
	\end{center}
\end{table}
\subsubsection{\((C_2 \times (C_8 : C_2)) : C_2\)}No group isocategorical to this group exists.
\begin{table}[H]
	\begin{center}
		\caption{Group table for generators}
		%
	\end{center}
\end{table}

\subsection{Order list \([1,15,32,16,0,0,0]\)}
\subsubsection{\(((C_8 \times C_2) : C_2) : C_2\)}No group isocategorical to this group exists.
\begin{table}[H]
	\begin{center}
		\caption{Group table for generators}
		%
	\end{center}
\end{table}

\subsubsection{\(C_2 \times ((C_2 \times Q_8) : C_2)\)}No group isocategorical to this group exists.
\begin{table}[H]
	\begin{center}
		\caption{Group table for generators}
		%
	\end{center}
\end{table}
\subsubsection{\((C_2 \times (C_4 : C_4)) : C_2\)}No group isocategorical to this group exists.
\begin{table}[H]
	\begin{center}
		\caption{Group table for generators}
		%
	\end{center}
\end{table}
\subsubsection{\(C_2 \times ((C_4 \times C_4) : C_2)\)}No group isocategorical to this group exists.
\begin{table}[H]
	\begin{center}
		\caption{Group table for generators}
		%
	\end{center}
\end{table}
\subsubsection{\((C_2 \times (C_8 : C_2)) : C_2\)}No group isocategorical to this group exists.
\begin{table}[H]
	\begin{center}
		\caption{Group table for generators}
		%
	\end{center}
\end{table}
\subsubsection{\((C_2 \times QD_{16}) : C_2\)}No group isocategorical to this group exists.
\begin{table}[H]
	\begin{center}
		\caption{Group table for generators}
		%
	\end{center}
\end{table}
\subsubsection{\(((C_2 \times Q_8) : C_2) : C_2\)}No group isocategorical to this group exists.
\begin{table}[H]
	\begin{center}
		\caption{Group table for generators}
		%
	\end{center}
\end{table}
\subsubsection{\(((C_8 \times C_2) : C_2) : C_2\)}No group isocategorical to this group exists.
\begin{table}[H]
	\begin{center}
		\caption{Group table for generators}
		%
	\end{center}
\end{table}
\subsubsection{\(((C_8 \times C_2) : C_2) : C_2\)}No group isocategorical to this group exists.
\begin{table}[H]
	\begin{center}
		\caption{Group table for generators}
		%
	\end{center}
\end{table}
\subsubsection{\(((C_8 \times C_2) : C_2) : C_2\)}No group isocategorical to this group exists.
\begin{table}[H]
	\begin{center}
		\caption{Group table for generators}
		%
	\end{center}
\end{table}
\subsubsection{\(((C_8 \times C_2) : C_2) : C_2\)}No group isocategorical to this group exists.
\begin{table}[H]
	\begin{center}
		\caption{Group table for generators}
		%
	\end{center}
\end{table}
\subsubsection{\(((C_8 \times C_2) : C_2) : C_2\)}No group isocategorical to this group exists.
\begin{table}[H]
	\begin{center}
		\caption{Group table for generators}
		%
	\end{center}
\end{table}
\subsubsection{\(C_2 \times ((C_2 \times Q_8) : C_2)\)}No group isocategorical to this group exists.
\begin{table}[H]
	\begin{center}
		\caption{Group table for generators}
		%
	\end{center}
\end{table}
\subsubsection{\((C_2 \times Q_{16}) : C_2\)}No group isocategorical to this group exists.
\begin{table}[H]
	\begin{center}
		\caption{Group table for generators}
		%
	\end{center}
\end{table}

\subsection{Order list \([1,15,48,0,0,0,0]\)}
\subsubsection{\(((C_4 \times C_2) : C_2) : C_4\)}No group isocategorical to this group exists.
\begin{table}[H]
	\begin{center}
		\caption{Group table for generators}
		%
	\end{center}
\end{table}
\subsubsection{\(C_2 \times ((C_4 \times C_2) : C_4)\)}No group isocategorical to this group exists.
\begin{table}[H]
	\begin{center}
		\caption{Group table for generators}
		%
	\end{center}
\end{table}
\subsubsection{\(C_4 \times ((C_4 \times C_2) : C_2)\)}No group isocategorical to this group exists.
\begin{table}[H]
	\begin{center}
		\caption{Group table for generators}
		%
	\end{center}
\end{table}
\subsubsection{\(((C_4 \times C_2) : C_4) : C_2\)}No group isocategorical to this group exists.
\begin{table}[H]
	\begin{center}
		\caption{Group table for generators}
		%
	\end{center}
\end{table}
\subsubsection{\(((C_4 \times C_2) : C_4) : C_2\)}No group isocategorical to this group exists.
\begin{table}[H]
	\begin{center}
		\caption{Group table for generators}
		%
	\end{center}
\end{table}
\subsubsection{\((C_2 \times (C_4 : C_4)) : C_2\)}No group isocategorical to this group exists.
\begin{table}[H]
	\begin{center}
		\caption{Group table for generators}
		%
	\end{center}
\end{table}
\subsubsection{\(((C_4 \times C_2) : C_4) : C_2\)}No group isocategorical to this group exists.
\begin{table}[H]
	\begin{center}
		\caption{Group table for generators}
		%
	\end{center}
\end{table}
\subsubsection{\(((C_4 \times C_2) : C_4) : C_2\)}No group isocategorical to this group exists.
\begin{table}[H]
	\begin{center}
		\caption{Group table for generators}
		%
	\end{center}
\end{table}
\subsubsection{\((C_2 \times (C_4 : C_4)) : C_2\)}No group isocategorical to this group exists.
\begin{table}[H]
	\begin{center}
		\caption{Group table for generators}
		%
	\end{center}
\end{table}
\subsubsection{\((C_2 \times (C_4 : C_4)) : C_2\)}No group isocategorical to this group exists.
\begin{table}[H]
	\begin{center}
		\caption{Group table for generators}
		%
	\end{center}
\end{table}
\subsubsection{\(((C_4 \times C_2) : C_4) : C_2\)}No group isocategorical to this group exists.
\begin{table}[H]
	\begin{center}
		\caption{Group table for generators}
		%
	\end{center}
\end{table}
\subsubsection{\(((C_4 \times C_4) : C_2) : C_2\)}No group isocategorical to this group exists.
\begin{table}[H]
	\begin{center}
		\caption{Group table for generators}
		%
	\end{center}
\end{table}
\subsubsection{\(C_4 \times C_4 \times C_2 \times C_2\)}No group isocategorical to this group exists.
As the group is abelian, no non-isomorphic groups exist that are isocategorical to it.

\subsubsection{\(C_2 \times C_2 \times (C_4 : C_4)\)}No group isocategorical to this group exists.
\begin{table}[H]
	\begin{center}
		\caption{Group table for generators}
		%
	\end{center}
\end{table}
\subsubsection{\(C_2 \times ((C_4 \times C_4) : C_2)\)}No group isocategorical to this group exists.
\begin{table}[H]
	\begin{center}
		\caption{Group table for generators}
		%
	\end{center}
\end{table}
\subsubsection{\(C_4 \times ((C_4 \times C_2) : C_2)\)}No group isocategorical to this group exists.
\begin{table}[H]
	\begin{center}
		\caption{Group table for generators}
		%
	\end{center}
\end{table}
\subsubsection{\(((C_4 \times C_4) : C_2) : C_2\)}No group isocategorical to this group exists.
\begin{table}[H]
	\begin{center}
		\caption{Group table for generators}
		%
	\end{center}
\end{table}
\subsubsection{\(C_2 \times ((C_2 \times Q_8) : C_2)\)}No group isocategorical to this group exists.
\begin{table}[H]
	\begin{center}
		\caption{Group table for generators}
		%
	\end{center}
\end{table}
\subsubsection{\(C_2 \times ((C_4 \times C_4) : C_2)\)}No group isocategorical to this group exists.
\begin{table}[H]
	\begin{center}
		\caption{Group table for generators}
		%
	\end{center}
\end{table}
\subsubsection{\(((C_4 \times C_4) : C_2) : C_2\)}No group isocategorical to this group exists.
\begin{table}[H]
	\begin{center}
		\caption{Group table for generators}
		%
	\end{center}
\end{table}
\subsubsection{\(((C_4 \times C_4) : C_2) : C_2\)}No group isocategorical to this group exists.
\begin{table}[H]
	\begin{center}
		\caption{Group table for generators}
		%
	\end{center}
\end{table}
\subsubsection{\((C_4 \times D_8) : C_2\)}No group isocategorical to this group exists.
\begin{table}[H]
	\begin{center}
		\caption{Group table for generators}
		%
	\end{center}
\end{table}
\subsubsection{\((C_4 \times D_8) : C_2\)}No group isocategorical to this group exists.
\begin{table}[H]
	\begin{center}
		\caption{Group table for generators}
		%
	\end{center}
\end{table}
\subsubsection{\(((C_2 \times Q_8) : C_2) : C_2\)}No group isocategorical to this group exists.
\begin{table}[H]
	\begin{center}
		\caption{Group table for generators}
		%
	\end{center}
\end{table}
\subsubsection{\(C_2 \times C_2 \times C_2 \times Q_8\)}No group isocategorical to this group exists.
\begin{table}[H]
	\begin{center}
		\caption{Group table for generators}
		%
	\end{center}
\end{table}

\subsection{Order list \([1,17,18,4,8,16,0]\)}
\subsubsection{\(QD_{64}\)}
Since only one group belongs to this order list, there are no non-isomorphic groups that are monoidally equivalent to it.

\subsection{Order list \([1,19,4,24,16,0,0]\)}
\subsubsection{\((C_{16}:C_2):C_2\)}No group isocategorical to this group exists.
Since only one group belongs to this order list, there are no non-isomorphic groups that are monoidally equivalent to it.

\subsection{Order list \([1,19,12,32,0,0,0]\)}
\subsubsection{\((C_4:C_8):C_2\)}No group isocategorical to this group exists.
Since only one group belongs to this order list, there are no non-isomorphic groups that are monoidally equivalent to it.

\subsection{Order list \([1,19,20,8,16,0,0]\)}
\subsubsection{\((C_{16} \times C_2) : C_2\)}No group isocategorical to this group exists.
\begin{table}[H]
	\begin{center}
		\caption{Group table for generators}

	\end{center}
\end{table}

\begin{table}[H]
	\begin{center}
		\caption{Candidate subgroups and the resulting twisted groups $G^\omega$}
		%
	\end{center}
\end{table}
\subsubsection{\(C_2 \times QD_{32}\)}
\begin{table}[H]
	\begin{center}
		\caption{Group table for generators}
		%
	\end{center}
\end{table}

\begin{table}[H]
	\begin{center}
		\caption{Candidate subgroups and the resulting twisted groups $G^\omega$}
		%
	\end{center}
\end{table}
\subsubsection{\(((C_8 \times C_2) : C_2) : C_2\)}No group isocategorical to this group exists.
\begin{table}[H]
	\begin{center}
		\caption{Group table for generators}
		%
	\end{center}
\end{table}

\begin{table}[H]
	\begin{center}
		\caption{Candidate subgroups and the resulting twisted groups $G^\omega$}
		%
	\end{center}
\end{table}

\subsection{Order list \([1,19,28,16,0,0,0]\)}
\subsubsection{\(((C_8 : C_2) : C_2) : C_2\)}No group isocategorical to this group exists.
\begin{table}[H]
	\begin{center}
		\caption{Group table for generators}
		%
	\end{center}
\end{table}
\subsubsection{\(C_4 \times D_{16}\)}
\begin{table}[H]
	\begin{center}
		\caption{Group table for generators}
		%
	\end{center}
\end{table}
\subsubsection{\((C_4 \times D_8) : C_2\)}No group isocategorical to this group exists.
\begin{table}[H]
	\begin{center}
		\caption{Group table for generators}
		%
	\end{center}
\end{table}
\subsubsection{\(((C_4 \times C_4) : C_2) : C_2\)}No group isocategorical to this group exists.
\begin{table}[H]
	\begin{center}
		\caption{Group table for generators}
		%
	\end{center}
\end{table}
\subsubsection{\(((C_4 \times C_4) : C_2) : C_2\)}No group isocategorical to this group exists.
\begin{table}[H]
	\begin{center}
		\caption{Group table for generators}
		%
	\end{center}
\end{table}
\subsubsection{\((C_2 \times QD_{16}) : C_2\)}No group isocategorical to this group exists.
\begin{table}[H]
	\begin{center}
		\caption{Group table for generators}
		%
	\end{center}
\end{table}
\subsubsection{\(((C_8 \times C_2) : C_2) : C_2\)}No group isocategorical to this group exists.
\begin{table}[H]
	\begin{center}
		\caption{Group table for generators}
		%
	\end{center}
\end{table}
\subsubsection{\(((C_8 \times C_2) : C_2) : C_2\)}No group isocategorical to this group exists.
\begin{table}[H]
	\begin{center}
		\caption{Group table for generators}
		%
	\end{center}
\end{table}
\subsubsection{\(((C_8 \times C_2) : C_2) : C_2\)}No group isocategorical to this group exists.
\begin{table}[H]
	\begin{center}
		\caption{Group table for generators}
		%
	\end{center}
\end{table}
\subsubsection{\((C_2 \times QD_{16}) : C_2\)}No group isocategorical to this group exists.
\begin{table}[H]
	\begin{center}
		\caption{Group table for generators}
		%
	\end{center}
\end{table}
\subsubsection{\(((C_4 \times C_4) : C_2) : C_2\)}No group isocategorical to this group exists.
\begin{table}[H]
	\begin{center}
		\caption{Group table for generators}
		%
	\end{center}
\end{table}

\subsection{Order list \([1,19,44,0,0,0,0]\)}
\subsubsection{\(((C_2 \times C_2 \times C_2) : C_4) : C_2\)}No group isocategorical to this group exists.
\begin{table}[H]
	\begin{center}
		\caption{Group table for generators}
		%
	\end{center}
\end{table}

\subsubsection{\(((C_4 \times C_2 \times C_2) : C_2) : C_2\)}No group isocategorical to this group exists.
\begin{table}[H]
	\begin{center}
		\caption{Group table for generators}
		%
	\end{center}
\end{table}

\subsubsection{\((C_2 \times ((C_4 \times C_2) : C_2)) : C_2\)}No group isocategorical to this group exists.
\begin{table}[H]
	\begin{center}
		\caption{Group table for generators}
		%
	\end{center}
\end{table}

\subsubsection{\((C_2 \times ((C_4 \times C_2) : C_2)) : C_2\)}No group isocategorical to this group exists.
\begin{table}[H]
	\begin{center}
		\caption{Group table for generators}
		%
	\end{center}
\end{table} 

\subsubsection{\(((C_4 \times C_4) : C_2) : C_2\)}\label{G1}
It is monoidally equivalent to the seventh group in subsubsection \([1,19,44,0,0,0,0]\).
\begin{table}[H]
	\begin{center}
		\caption{Group table for generators}
		%
	\end{center}
\end{table}

\subsubsection{\((C_4 \times C_4) : C_2) : C_2\)}No group isocategorical to this group exists.
\begin{table}[H]
	\begin{center}
		\caption{Group table for generators}
		%
	\end{center}
\end{table}

\subsubsection{\(((C_4 \times C_4) : C_2) : C_2\)}\label{G2}
It is monoidally equivalent to the fifth group in subsubsection \([1,19,44,0,0,0,0]\).
\begin{table}[H]
	\begin{center}
		\caption{Group table for generators}
		%
	\end{center}
\end{table}

\subsubsection{\((C_4 \times D_8) : C_2\)}No group isocategorical to this group exists.
\begin{table}[H]
	\begin{center}
		\caption{Group table for generators}
		%
	\end{center}
\end{table}

\subsubsection{\(((C_4 \times C_2 \times C_2) : C_2) : C_2\)}No group isocategorical to this group exists.
\begin{table}[H]
	\begin{center}
		\caption{Group table for generators}
		%
	\end{center}
\end{table}

\subsection{Order list \([1,23,8,32,0,0,0]\)}
\subsubsection{\(C_2 \times ((C_8 : C_2) : C_2)\)}No group isocategorical to this group exists.
\begin{table}[H]
	\begin{center}
		\caption{Group table for generators}
		%
	\end{center}
\end{table}
\subsubsection{\((C_8 : (C_2 \times C_2)) : C_2\)}No group isocategorical to this group exists.
\begin{table}[H]
	\begin{center}
		\caption{Group table for generators}
		%
	\end{center}
\end{table}

\subsection{Order list \([1,23,24,16,0,0,0]\)}
\subsubsection{\(C_2 \times ((C_8 \times C_2) : C_2)\)}No group isocategorical to this group exists.
\begin{table}[H]
	\begin{center}
		\caption{Group table for generators}
		%
	\end{center}
\end{table}
\subsubsection{\(((C_4 \times C_4) : C_2) : C_2\)}No group isocategorical to this group exists.
\begin{table}[H]
	\begin{center}
		\caption{Group table for generators}
		%
	\end{center}
\end{table}
\subsubsection{\(((C_4 \times C_2 \times C_2) : C_2) : C_2\)}No group isocategorical to this group exists.
\begin{table}[H]
	\begin{center}
		\caption{Group table for generators}
		%
	\end{center}
\end{table}
\subsubsection{\(((C_2 \times Q_8) : C_2) : C_2\)}No group isocategorical to this group exists.
\begin{table}[H]
	\begin{center}
		\caption{Group table for generators}
		%
	\end{center}
\end{table}
\subsubsection{\(((C_4 \times C_2 \times C_2) : C_2) : C_2\)}No group isocategorical to this group exists.
\begin{table}[H]
	\begin{center}
		\caption{Group table for generators}
		%
	\end{center}
\end{table}
\subsubsection{\(((C_4 \times C_2 \times C_2) : C_2) : C_2\)}No group isocategorical to this group exists.
\begin{table}[H]
	\begin{center}
		\caption{Group table for generators}
		%
	\end{center}
\end{table}
\subsubsection{\(C_2 \times C_2 \times QD_{16}\)}
\begin{table}[H]
	\begin{center}
		\caption{Group table for generators}
		%
	\end{center}
\end{table}
\subsubsection{\(C_2 \times ((C_8 \times C_2) : C_2)\)}No group isocategorical to this group exists.
\begin{table}[H]
	\begin{center}
		\caption{Group table for generators}
		%
	\end{center}
\end{table}
\subsubsection{\(((C_8 \times C_2) : C_2) : C_2\)}No group isocategorical to this group exists.
\begin{table}[H]
	\begin{center}
		\caption{Group table for generators}
		%
	\end{center}
\end{table}
\subsubsection{\((C_2 \times QD_{16}) : C_2\)}No group isocategorical to this group exists.
\begin{table}[H]
	\begin{center}
		\caption{Group table for generators}
		%
	\end{center}
\end{table}

\subsection{Order list \([1,23,40,0,0,0,0]\)}
\subsubsection{\((C_2 \times ((C_4 \times C_2) : C_2)) : C_2\)}No group isocategorical to this group exists.
\begin{table}[H]
	\begin{center}
		\caption{Group table for generators}
		%
	\end{center}
\end{table}
\subsubsection{\((C_2 \times (C_4 : C_4)) : C_2\)}No group isocategorical to this group exists.
\begin{table}[H]
	\begin{center}
		\caption{Group table for generators}
		%
	\end{center}
\end{table}
\subsubsection{\((C_2 \times ((C_4 \times C_2) : C_2)) : C_2\)}No group isocategorical to this group exists.
\begin{table}[H]
	\begin{center}
		\caption{Group table for generators}
		%
	\end{center}
\end{table}
\subsubsection{\(C_2 \times ((C_2 \times C_2 \times C_2) : C_4)
	\)}
\begin{table}[H]
	\begin{center}
		\caption{Group table for generators}
		%
	\end{center}
\end{table}
\subsubsection{\(C_2 \times C_4 \times D_8\)}No group isocategorical to this group exists.
\begin{table}[H]
	\begin{center}
		\caption{Group table for generators}
		%
	\end{center}
\end{table}
\subsubsection{\((C_2 \times ((C_4 \times C_2) : C_2)) : C_2\)}No group isocategorical to this group exists.
\begin{table}[H]
	\begin{center}
		\caption{Group table for generators}
		%
	\end{center}
\end{table}
\subsubsection{\(C_2 \times ((C_4 \times C_2 \times C_2) : C_2)\)}No group isocategorical to this group exists.
\begin{table}[H]
	\begin{center}
		\caption{Group table for generators}
		%
	\end{center}
\end{table}
\subsubsection{\((C_2 \times ((C_4 \times C_2) : C_2)) : C_2\)}No group isocategorical to this group exists.
\begin{table}[H]
	\begin{center}
		\caption{Group table for generators}
		%
	\end{center}
\end{table}
\subsubsection{\(C_2 \times ((C_4 \times C_4) : C_2)\)}No group isocategorical to this group exists.
\begin{table}[H]
	\begin{center}
		\caption{Group table for generators}
		%
	\end{center}
\end{table}
\subsubsection{\((C_2 \times ((C_4 \times C_2) : C_2)) : C_2\)}No group isocategorical to this group exists.
\begin{table}[H]
	\begin{center}
		\caption{Group table for generators}
		%
	\end{center}
\end{table}
\subsubsection{\((C_2 \times ((C_4 \times C_2) : C_2)) : C_2\)}No group isocategorical to this group exists.
\begin{table}[H]
	\begin{center}
		\caption{Group table for generators}
		%
	\end{center}
\end{table}
\subsubsection{\((C_4 \times D_8) : C_2\)}No group isocategorical to this group exists.
\begin{table}[H]
	\begin{center}
		\caption{Group table for generators}
		%
	\end{center}
\end{table}
\subsubsection{\((C_4 \times D_8) : C_2\)}No group isocategorical to this group exists.
\begin{table}[H]
	\begin{center}
		\caption{Group table for generators}
		%
	\end{center}
\end{table}
\subsubsection{\(C_2 \times ((C_2 \times Q_8) : C_2)\)}No group isocategorical to this group exists.
\begin{table}[H]
	\begin{center}
		\caption{Group table for generators}
		%
	\end{center}
\end{table}

\subsection{Order list \([1,27,12,8,16,0,0]\)}
\subsubsection{\(((C_8\times C_2):C_2):C_2\)}No group isocategorical to this group exists.
Since only one group belongs to this order list, there are no non-isomorphic groups that are monoidally equivalent to it.

\subsection{Order list \([1,27,20,16,0,0,0]\)}
\subsubsection{\(((C_4 \times C_4) : C_2) : C_2\)}No group isocategorical to this group exists.
\begin{table}[H]
	\begin{center}
		\caption{Group table for generators}
		%
	\end{center}
\end{table}
\subsubsection{\((C_2 \times D_{16}) : C_2\)}No group isocategorical to this group exists.
\begin{table}[H]
	\begin{center}
		\caption{Group table for generators}
		%
	\end{center}
\end{table}
\subsubsection{\(((C_4 \times C_4) : C_2) : C_2\)}No group isocategorical to this group exists.
\begin{table}[H]
	\begin{center}
		\caption{Group table for generators}
		%
	\end{center}
\end{table}

\subsection{Order list \([1,27,36,0,0,0,0]\)}
\subsubsection{\(((C_2 \times C_2 \times C_2 \times C_2) : C_2) : C_2\)}No group isocategorical to this group exists.
\begin{table}[H]
	\begin{center}
		\caption{Group table for generators}
		%
	\end{center}
\end{table}
\subsubsection{\((C_2 \times ((C_4 \times C_2) : C_2)) : C_2\)}No group isocategorical to this group exists.
\begin{table}[H]
	\begin{center}
		\caption{Group table for generators}
		%
	\end{center}
\end{table}
\subsubsection{\((C_2 \times ((C_4 \times C_2) : C_2)) : C_2\)}No group isocategorical to this group exists.
\begin{table}[H]
	\begin{center}
		\caption{Group table for generators}
		%
	\end{center}
\end{table}
\subsubsection{\(((C_4 \times C_2 \times C_2) : C_2) : C_2\)}No group isocategorical to this group exists.
\begin{table}[H]
	\begin{center}
		\caption{Group table for generators}
		%
	\end{center}
\end{table}
\subsubsection{\(((C_4 \times C_4) : C_2) : C_2\)}No group isocategorical to this group exists.
\begin{table}[H]
	\begin{center}
		\caption{Group table for generators}
		%
	\end{center}
\end{table}

\subsection{Order list \([1,31,16,16,0,0,0]\)}
\subsubsection{\(((C_4 \times C_2 \times C_2) : C_2) : C_2\)}No group isocategorical to this group exists.
\begin{table}[H]
	\begin{center}
		\caption{Group table for generators}
		%
	\end{center}
\end{table}
\subsubsection{\(C_2 \times (C_8 : (C_2 \times C_2))\)}No group isocategorical to this group exists.
\begin{table}[H]
	\begin{center}
		\caption{Group table for generators}
		%
	\end{center}
\end{table}
\subsubsection{\((C_2 \times D_{16}) : C_2\)}No group isocategorical to this group exists.
\begin{table}[H]
	\begin{center}
		\caption{Group table for generators}
		%
	\end{center}
\end{table}

\subsection{Order list \([1,31,32,0,0,0,0]\)}
\subsubsection{\((C_2 \times ((C_4 \times C_2) : C_2)) : C_2\)}No group isocategorical to this group exists.
\begin{table}[H]
	\begin{center}
		\caption{Group table for generators}
		%
	\end{center}
\end{table}
\subsubsection{\((C_2 \times C_2 \times D_8) : C_2\)}No group isocategorical to this group exists.
\begin{table}[H]
	\begin{center}
		\caption{Group table for generators}
		%
	\end{center}
\end{table}
\subsubsection{\(C_2 \times C_2 \times ((C_4 \times C_2) : C_2)\)}No group isocategorical to this group exists.
\begin{table}[H]
	\begin{center}
		\caption{Group table for generators}
		%

\subsubsection{\(C_2 \times ((C_4 \times C_2 \times C_2) : C_2)\)}No group isocategorical to this group exists.
\begin{table}[H]
	\begin{center}
		\caption{Group table for generators}
		%
	\end{center}
\end{table}
\subsubsection{\((C_2 \times C_2 \times D_8) : C_2\)}\label{G3}
It is monoidally equivalent to the sixth group in subsubsection \([1,31,32,0,0,0,0]\).
\begin{table}[H]
	\begin{center}
		\caption{Group table for generators}
		%
	\end{center}
\end{table}
\subsubsection{\((C_2 \times ((C_4 \times C_2) : C_2)) : C_2\)}\label{G4}
It is monoidally equivalent to the fifth group in subsubsection \([1,31,32,0,0,0,0]\).
\begin{table}[H]
	\begin{center}
		\caption{Group table for generators}
		%
	\end{center}
\end{table}
\subsubsection{\(C_4\times C_2\times C_2\times C_2\times C_2\)}No group isocategorical to this group exists.
As the group is abelian, no non-isomorphic groups exist that are isocategorical to it.

\subsubsection{\(C_2 \times C_2 \times ((C_4 \times C_2) : C_2)\)}No group isocategorical to this group exists.
\begin{table}[H]
	\begin{center}
		\caption{Group table for generators}
		%
	\end{center}
\end{table}
\subsubsection{\((C_2 \times ((C_4 \times C_2) : C_2)) : C_2\)}No group isocategorical to this group exists.
\begin{table}[H]
	\begin{center}
		\caption{Group table for generators}
		%
	\end{center}
\end{table}

\subsection{Order list \([1,33,2,4,8,16,0]\)}
\subsubsection{\(D_{64}\)}
Since only one group belongs to this order list, there are no non-isomorphic groups that are monoidally equivalent to it.

\subsection{Order list \([1,35,4,8,16,0,0]\)}
\subsubsection{\(C_2\times D_{32}\)}
Since only one group belongs to this order list, there are no non-isomorphic groups that are monoidally equivalent to it.

\subsection{Order list \([1,35,12,16,0,0,0]\)}
\subsubsection{\(((C_4\times C_4):C_2):C_2\)}
Since only one group belongs to this order list, there are no non-isomorphic groups that are monoidally equivalent to it.

\subsection{Order list \([1,35,28,0,0,0,0]\)}
\subsubsection{\(D_8\times D_8\)}
Since only one group belongs to this order list, there are no non-isomorphic groups that are monoidally equivalent to it.

\subsection{Order list \([1,39,8,16,0,0,0]\)}
\subsubsection{\(C_2\times C_2\times D_{16}\)}
Since only one group belongs to this order list, there are no non-isomorphic groups that are monoidally equivalent to it.

\subsection{Order list \([1,39,24,0,0,0,0]\)}
\subsubsection{\(C_2 \times ((C_2 \times C_2 \times C_2 \times C_2) : C_2)\)}No group isocategorical to this group exists.
\begin{table}[H]
	\begin{center}
		\caption{Group table for generators}
		\begin{tabular}{c||c|c|c|c|c|c|}
			& $f_1$ & $f_2$ & $f_3$ & $f_4$ & $f_5$ & $f_6$ \\ \hline\hline
			$f_1$ & $1$ & $f_1f_2$ & $f_1f_3$ & $f_1f_4$ & $f_1f_5$ & $f_1f_6$ \\ \hline
			$f_2$ & $f_1f_2f_5$ & $1$ & $f_2f_3$ & $f_2f_4$ & $f_2f_5$ & $f_2f_6$ \\ \hline
			$f_3$ & $f_1f_3f_6$ & $f_2f_3$ & $1$ & $f_3f_4$ & $f_3f_5$ & $f_3f_6$ \\ \hline
			$f_4$ & $f_1f_4$ & $f_2f_4$ & $f_3f_4$ & $1$ & $f_4f_5$ & $f_4f_6$ \\ \hline
			$f_5$ & $f_1f_5$ & $f_2f_5$ & $f_3f_5$ & $f_4f_5$ & $1$ & $f_5f_6$ \\ \hline
			$f_6$ & $f_1f_6$ & $f_2f_6$ & $f_3f_6$ & $f_4f_6$ & $f_5f_6$ & $1$ \\ \hline
		\end{tabular}
	\end{center}
\end{table}

\begin{table}[H]
	\begin{center}
		\caption{Candidate subgroups and the resulting twisted groups $G^\omega$}
		\begin{tabular}{|c|c|c|c|c|}
			Subgroup & Structure of \(N\) & \(G\)-inv & \(G^\omega\cong G\) & Number of \(G^\omega\)  \\ \hline
			
			$[ f_2, f_5 ]$ & $C_2\times C_2$ & $\bigcirc$ & $\bigcirc$ & 1 \\ \hline 
			$[ f_2f_3, f_5f_6 ]$ & $C_2\times C_2$ & $\bigcirc$ & $\bigcirc$ & 1 \\ \hline 
			$[ f_3, f_6 ]$ & $C_2\times C_2$ & $\bigcirc$ & $\bigcirc$ & 1 \\ \hline 
			$[ f_4f_6, f_5f_6 ]$ & $C_2\times C_2$ & $\bigcirc$ & $\bigcirc$ & 1 \\ \hline 
			$[ f_4, f_5f_6 ]$ & $C_2\times C_2$ & $\bigcirc$ & $\bigcirc$ & 1 \\ \hline 
			$[ f_4f_6, f_5 ]$ & $C_2\times C_2$ & $\bigcirc$ & $\bigcirc$ & 1 \\ \hline 
			$[ f_4, f_5 ]$ & $C_2\times C_2$ & $\bigcirc$ & $\bigcirc$ & 1 \\ \hline 
			$[ f_4f_5, f_6 ]$ & $C_2\times C_2$ & $\bigcirc$ & $\bigcirc$ & 1 \\ \hline 
			$[ f_4, f_6 ]$ & $C_2\times C_2$ & $\bigcirc$ & $\bigcirc$ & 1 \\ \hline 
			$[ f_3f_5, f_6 ]$ & $C_2\times C_2$ & $\bigcirc$ & $\bigcirc$ & 1 \\ \hline 
			$[ f_2f_4f_6, f_5 ]$ & $C_2\times C_2$ & $\bigcirc$ & $\bigcirc$ & 1 \\ \hline 
			$[ f_2f_4, f_5 ]$ & $C_2\times C_2$ & $\bigcirc$ & $\bigcirc$ & 1 \\ \hline 
			$[ f_2f_3f_6, f_5f_6 ]$ & $C_2\times C_2$ & $\bigcirc$ & $\bigcirc$ & 1 \\ \hline 
			$[ f_2f_6, f_5 ]$ & $C_2\times C_2$ & $\bigcirc$ & $\bigcirc$ & 1 \\ \hline 
			$[ f_3f_4f_5, f_6 ]$ & $C_2\times C_2$ & $\bigcirc$ & $\bigcirc$ & 1 \\ \hline 
			$[ f_3f_4, f_6 ]$ & $C_2\times C_2$ & $\bigcirc$ & $\bigcirc$ & 1 \\ \hline 
			$[ f_2f_3f_4f_6, f_5f_6 ]$ & $C_2\times C_2$ & $\bigcirc$ & $\bigcirc$ & 1 \\ \hline 
			$[ f_2f_3f_4, f_5f_6 ]$ & $C_2\times C_2$ & $\bigcirc$ & $\bigcirc$ & 1 \\ \hline 
			$[ f_5, f_6 ]$ & $C_2\times C_2$ & $\bigcirc$ & $\bigcirc$ & 1 \\ \hline 
			$[ f_5, f_6, f_2f_4, f_3 ]$ & $C_2\times C_2\times C_2\times C_2$ & 
			\begin{tabular}{c}
				1,2,3,4 \\
				5,6,11,12
			\end{tabular}
			& $\bigcirc$  &1   \\ \hline 
			$[ f_5, f_6, f_3, f_4 ]$ & $C_2\times C_2\times C_2\times C_2$ & 1,2,3,4  & $\bigcirc$  &1   \\ \hline 
			$[ f_5, f_6, f_2f_3, f_4 ]$ & $C_2\times C_2\times C_2\times C_2$ & 9,10,11,12  & $\bigcirc$  &1   \\ \hline 
			$[ f_5, f_6, f_1, f_4 ]$ & $C_2\times C_2\times C_2\times C_2$ & $\times$  & -  &-   \\ \hline 
			$[ f_5, f_6, f_2, f_4 ]$ & $C_2\times C_2\times C_2\times C_2$ & 5,6,7,8  & $\bigcirc$  & 1  \\ \hline 
			$[ f_5, f_6, f_2, f_3f_4 ]$ & $C_2\times C_2\times C_2\times C_2$ & 
			\begin{tabular}{c}
				1,2,3,4 \\
				5,6,11,12
			\end{tabular}
			& $\bigcirc$  &1   \\ \hline 
			$[ f_5, f_6, f_2, f_3 ]$ & $C_2\times C_2\times C_2\times C_2$ & 
			\begin{tabular}{c}
				1,2,3,4 \\
				5,6,11,12
			\end{tabular}
			& $\bigcirc$  &1   \\ \hline 
			$[ f_5, f_6, f_2f_4, f_3f_4 ]$ & $C_2\times C_2\times C_2\times C_2$ & 
			\begin{tabular}{c}
				1,2,3,4 \\
				5,6,11,12
			\end{tabular}
			& $\bigcirc$  &1   \\ \hline
		\end{tabular}
	\end{center}
\end{table}
\subsubsection{\(C_2 \times ((C_4 \times C_4) : C_2)\)}No group isocategorical to this group exists.
\begin{table}[H]
	\begin{center}
		\caption{Group table for generators}
		\begin{tabular}{c||c|c|c|c|c|c|}
			& $f_1$ & $f_2$ & $f_3$ & $f_4$ & $f_5$ & $f_6$ \\ \hline\hline
			$f_1$ & $1$ & $f_1f_2$ & $f_1f_3$ & $f_1f_4$ & $f_1f_5$ & $f_1f_6$ \\ \hline
			$f_2$ & $f_1f_2f_5$ & $f_5$ & $f_2f_3$ & $f_2f_4$ & $f_2f_5$ & $f_2f_6$ \\ \hline
			$f_3$ & $f_1f_3f_6$ & $f_2f_3$ & $f_6$ & $f_3f_4$ & $f_3f_5$ & $f_3f_6$ \\ \hline
			$f_4$ & $f_1f_4$ & $f_2f_4$ & $f_3f_4$ & $1$ & $f_4f_5$ & $f_4f_6$ \\ \hline
			$f_5$ & $f_1f_5$ & $f_2f_5$ & $f_3f_5$ & $f_4f_5$ & $1$ & $f_5f_6$ \\ \hline
			$f_6$ & $f_1f_6$ & $f_2f_6$ & $f_3f_6$ & $f_4f_6$ & $f_5f_6$ & $1$ \\ \hline
		\end{tabular}
	\end{center}
\end{table}

\begin{table}[H]
	\begin{center}
		\caption{Candidate subgroups and the resulting twisted groups $G^\omega$}
		\begin{tabular}{|c|c|c|c|c|}
			Subgroup & Structure of \(N\) & \(G\)-inv & \(G^\omega\cong G\) & Number of \(G^\omega\)  \\ \hline
			
			$[ f_4f_5, f_6 ]$ & $C_2\times C_2$ & $\bigcirc$ & $\bigcirc$ & 2 \\ \hline 
			$[ f_4f_6, f_5f_6 ]$ & $C_2\times C_2$ & $\bigcirc$ & $\bigcirc$ & 2 \\ \hline 
			$[ f_4, f_5f_6 ]$ & $C_2\times C_2$ & $\bigcirc$ & $\bigcirc$ & 2 \\ \hline 
			$[ f_4f_6, f_5 ]$ & $C_2\times C_2$ & $\bigcirc$ & $\bigcirc$ & 2 \\ \hline 
			$[ f_4, f_5 ]$ & $C_2\times C_2$ & $\bigcirc$ & $\bigcirc$ & 2 \\ \hline 
			$[ f_4, f_6 ]$ & $C_2\times C_2$ & $\bigcirc$ & $\bigcirc$ & 2 \\ \hline 
			$[ f_5, f_6 ]$ & $C_2\times C_2$ & $\bigcirc$ & $\bigcirc$ & 2 \\ \hline 
			$[ f_2f_4, f_3 ]$ & $C_4\times C_4$ & $\bigcirc$ & $\bigcirc$ & 2 \\ \hline 
			$[ f_5, f_6, f_1f_2f_3, f_4 ]$ & $C_2\times C_2\times C_2\times C_2$ &  $\times$ & -  &-   \\ \hline 
			$[ f_2, f_3f_4 ]$ & $C_4\times C_4$ & $\bigcirc$ & $\bigcirc$ & 2 \\ \hline 
			$[ f_5, f_6, f_1, f_4 ]$ & $C_2\times C_2\times C_2\times C_2$ & $\times$  &-   &-   \\ \hline 
			$[ f_5, f_6, f_1f_3, f_4 ]$ & $C_2\times C_2\times C_2\times C_2$ & $\times$  &-   &-   \\ \hline 
			$[ f_2, f_3 ]$ & $C_4\times C_4$ & $\bigcirc$ & $\bigcirc$ & 2 \\ \hline 
			$[ f_5, f_6, f_1f_2, f_4 ]$ & $C_2\times C_2\times C_2\times C_2$ &$\times$   &-   &-   \\ \hline 
			$[ f_2f_4, f_3f_4 ]$ & $C_4\times C_4$ & $\bigcirc$ & $\bigcirc$ & 2 \\ \hline 
		\end{tabular}
	\end{center}
\end{table}
\subsubsection{\(C_2 \times ((C_2 \times C_2 \times C_2) : (C_2 \times C_2))\)}No group isocategorical to this group exists.
\begin{table}[H]
	\begin{center}
		\caption{Group table for generators}
		\begin{tabular}{c||c|c|c|c|c|c|}
			& $f_1$ & $f_2$ & $f_3$ & $f_4$ & $f_5$ & $f_6$ \\ \hline\hline
			$f_1$ & $1$ & $f_1f_2$ & $f_1f_3$ & $f_1f_4$ & $f_1f_5$ & $f_1f_6$ \\ \hline
			$f_2$ & $f_1f_2f_6$ & $1$ & $f_2f_3$ & $f_2f_4$ & $f_2f_5$ & $f_2f_6$ \\ \hline
			$f_3$ & $f_1f_3$ & $f_2f_3f_6$ & $1$ & $f_3f_4$ & $f_3f_5$ & $f_3f_6$ \\ \hline
			$f_4$ & $f_1f_4f_6$ & $f_2f_4$ & $f_3f_4$ & $1$ & $f_4f_5$ & $f_4f_6$ \\ \hline
			$f_5$ & $f_1f_5$ & $f_2f_5$ & $f_3f_5$ & $f_4f_5$ & $1$ & $f_5f_6$ \\ \hline
			$f_6$ & $f_1f_6$ & $f_2f_6$ & $f_3f_6$ & $f_4f_6$ & $f_5f_6$ & $1$ \\ \hline
		\end{tabular}
	\end{center}
\end{table}

\begin{table}[H]
	\begin{center}
		\caption{Candidate subgroups and the resulting twisted groups $G^\omega$}
		\begin{tabular}{|c|c|c|c|c|}
			Subgroup & Structure of \(N\) & \(G\)-inv & \(G^\omega\cong G\) & Number of \(G^\omega\)  \\ \hline
			
			$[ f_3f_4, f_6 ]$ & $C_2\times C_2$ & $\bigcirc$ & $\bigcirc$ & 3 \\ \hline 
			$[ f_2f_4, f_6 ]$ & $C_2\times C_2$ & $\bigcirc$ & $\bigcirc$ & 3 \\ \hline 
			$[ f_5, f_6 ]$ & $C_2\times C_2$ & $\bigcirc$ & $\bigcirc$ & 3 \\ \hline 
			$[ f_4, f_6 ]$ & $C_2\times C_2$ & $\bigcirc$ & $\bigcirc$ & 3 \\ \hline 
			$[ f_1f_5, f_6 ]$ & $C_2\times C_2$ & $\bigcirc$ & $\bigcirc$ & 3 \\ \hline 
			$[ f_2f_5, f_6 ]$ & $C_2\times C_2$ & $\bigcirc$ & $\bigcirc$ & 3 \\ \hline 
			$[ f_3f_5, f_6 ]$ & $C_2\times C_2$ & $\bigcirc$ & $\bigcirc$ & 3 \\ \hline 
			$[ f_1f_3, f_6 ]$ & $C_2\times C_2$ & $\bigcirc$ & $\bigcirc$ & 3 \\ \hline 
			$[ f_1f_3f_5, f_6 ]$ & $C_2\times C_2$ & $\bigcirc$ & $\bigcirc$ & 3 \\ \hline 
			$[ f_1f_2f_4, f_6 ]$ & $C_2\times C_2$ & $\bigcirc$ & $\bigcirc$ & 3 \\ \hline 
			$[ f_3, f_6 ]$ & $C_2\times C_2$ & $\bigcirc$ & $\bigcirc$ & 3 \\ \hline 
			$[ f_4f_5, f_6 ]$ & $C_2\times C_2$ & $\bigcirc$ & $\bigcirc$ & 3 \\ \hline 
			$[ f_2f_4f_5, f_6 ]$ & $C_2\times C_2$ & $\bigcirc$ & $\bigcirc$ & 3 \\ \hline 
			$[ f_2, f_6 ]$ & $C_2\times C_2$ & $\bigcirc$ & $\bigcirc$ & 3 \\ \hline 
			$[ f_1f_2f_4f_5, f_6 ]$ & $C_2\times C_2$ & $\bigcirc$ & $\bigcirc$ & 3 \\ \hline 
			$[ f_1, f_6 ]$ & $C_2\times C_2$ & $\bigcirc$ & $\bigcirc$ & 3 \\ \hline 
			$[ f_3f_4f_5, f_6 ]$ & $C_2\times C_2$ & $\bigcirc$ & $\bigcirc$ & 3 \\ \hline 
			$[ f_1f_2f_3, f_6 ]$ & $C_2\times C_2$ & $\bigcirc$ & $\bigcirc$ & 3 \\ \hline 
			$[ f_1f_2f_3f_5, f_6 ]$ & $C_2\times C_2$ & $\bigcirc$ & $\bigcirc$ & 3 \\ \hline 
			$[ f_6, f_3, f_4, f_5 ]$ & $C_2\times C_2\times C_2\times C_2$ & $\times$  & -  &-   \\ \hline 
			$[ f_6, f_2, f_4, f_5 ]$ & $C_2\times C_2\times C_2\times C_2$ & $\times$  & -  &-   \\ \hline 
			$[ f_6, f_1, f_3, f_5 ]$ & $C_2\times C_2\times C_2\times C_2$ & $\times$ & -  &-   \\ \hline 
			$[ f_6, f_2, f_1f_3, f_5 ]$ & $C_2\times C_2\times C_2\times C_2$ & $\times$  &  - & -  \\ \hline 
			$[ f_6, f_1, f_2f_4, f_5 ]$ & $C_2\times C_2\times C_2\times C_2$ & $\times$ &  - &  - \\ \hline 
			$[ f_6, f_1f_2f_3, f_1f_2f_4, f_5 ]$ & $C_2\times C_2\times C_2\times C_2$ &  $\times$ & -  &-   \\ \hline 
		\end{tabular}
	\end{center}
\end{table}

\subsection{Order list \([1,47,16,0,0,0,0]\)}
\subsubsection{\(C_2\times C_2\times C_2\times D_8\)}
Since only one group belongs to this order list, there are no non-isomorphic groups that are monoidally equivalent to it.

\subsection{Order list \([1,63,0,0,0,0,0]\)}
\subsubsection{\(C_2\times C_2\times C_2\times C_2\times C_2\times C_2\)}
As the group is abelian, no non-isomorphic groups exist that are isocategorical to it.

\section{conclusion}
Based on the computational results presented in the previous sections, we establish the following theorem regarding the classification of isocategorical groups of order 64.
\begin{thm}\label{classification}
	There exist exactly two pairs of non-isomorphic groups of order 64 that are monoidally equivalent.
\end{thm}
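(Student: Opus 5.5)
The proof is an exhaustive case analysis assembled from the results of the previous sections. By the theorem of Etingof and Gelaki together with Theorem \ref{izumi-kosaki 1}, any group isocategorical to a group $G$ of order $64$ is of the form $G^\omega$. Here $\omega$ is a non-degenerate $G$-invariant 2-cocycle on $\widehat{N}$ for some abelian normal subgroup $N \trianglelefteq G$. By Lemma \ref{character of N}, $|N|$ is a square and $N$ is non-cyclic, and Theorem \ref{candidate of N} then restricts $N$ to $C_2^{\times 2}$, $C_4^{\times 2}$ or $C_2^{\times 4}$. So it suffices to run over all $267$ groups $G$ and all normal subgroups $N$ of these three types. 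For each pair one decides $G$-invariance of every non-degenerate class and, when it holds, identifies $G^\omega$ up to isomorphism.

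\textbf{Step 1: cheap reductions.} By Shimizu's theorem the order list is an isocategorical invariant, so $G^\omega$ must lie in the same order-list class as $G$. The groups are therefore partitioned by order list as in the preceding section. A class containing a single group needs no further work. Abelian groups are rigid: if $G$ is abelian, every $\eta$ produced by Theorem \ref{relation of Gw} gives a group $G^\omega$ whose representation category is $\Rep(G)$ with $\dim$ of all simples equal to one, hence $G^\omega$ is abelian with the same order list, hence isomorphic to $G$. A class consisting of one abelian and one non-abelian group is also settled, since $G^\omega$ is abelian iff $G$ is (all irreducibles one-dimensional is a categorical property).

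\textbf{Step 2: remaining groups.} For each remaining $G$, enumerate the normal subgroups via \texttt{ObtainedSubgroups}. Check $G$-invariance with Lemmas \ref{G-inv for 2_2}, \ref{G-inv for 4_4} and \ref{G-inv for 2_2_2_2}. For the $C_4^{\times 2}$ case, by the lemma preceding the $C_4^{\times 2}$ implementation it suffices to check $\omega_1$. For the $C_2^{\times 4}$ case, all $28$ classes of Table \ref{nondeg 28 cocycles} are tested. For each invariant pair, compute $\eta$ by Lemmas \ref{eta2x2} and \ref{eta4x4} (and the $C_2^{\times 4}$ analogue via \texttt{Xi} and \texttt{Eta}). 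Build $G^\omega$ as the finitely presented group on $64$ generators with relators $f_if_j=f_k$, and test it against $G$ and the other groups of its order-list class using \texttt{IsomorphismGroups} and \texttt{IdGroup}.

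\textbf{Step 3: reading off the tables.} The non-isomorphic outcomes occur only for the groups labelled \ref{G1}, \ref{G2} in the class $[1,19,44,0,0,0,0]$ (the Izumi--Kosaki pair, reconstructed in Listing \ref{construction of Gw}) and \ref{G3}, \ref{G4} in the class $[1,31,32,0,0,0,0]$. Each of these occurrences is symmetric, since isocategoricity is an equivalence relation, so it yields exactly two unordered pairs. In every other case $G^\omega\cong G$. Non-isomorphism within each pair is certified by GAP returning \texttt{fail}; isocategoricity follows from the theorem of Etingof and Gelaki.

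The main obstacle is guaranteeing exhaustiveness and correctness of Step 2. Several things must hold. Every normal subgroup must be found, not just representatives up to automorphism, although using $\mathrm{Aut}(G)$-orbits would suffice. The $\eta$ formulas must be valid, which requires a consistent normalized choice of $\xi_g$. The $C_2^{\times 4}$ case, handled by a separate routine, must also be fully covered. I would first cross-check the pipeline on the known Izumi--Kosaki example. Then I would verify that each constructed $G^\omega$ has order $64$, as a sanity check that the relators encode a genuine group.
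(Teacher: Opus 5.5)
Your proposal is correct and follows essentially the same route as the paper. Both use Etingof--Gelaki and Izumi--Kosaki to reduce to non-degenerate $G$-invariant cocycles on normal subgroups $N\cong C_2^{\times 2}, C_4^{\times 2}, C_2^{\times 4}$, dispose of abelian groups and single-group order-list classes, run the GAP scan on the rest, and read off the pairs $\{G_1,G_2\}$ and $\{G_3,G_4\}$ from the tables. Your extra justifications (that abelianness is a categorical invariant, and that order lists determine finite abelian groups) fill in points the paper only asserts, but they do not change the argument.
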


The two pairs stated in Theorem \ref{classification} are described in Theorem \ref{pair1} and Theorem \ref{pair2}, respectively.

\begin{thm}\label{pair1}
	Let $G_1$ and $G_2$ be the groups generated by the elements defined in the following group tables.
	While $G_1$ and $G_2$ are not isomorphic as groups, they are monoidally equivalent.
	The order list for both groups is given by the list:
	$$
	\mathrm{List}(G_1)=\mathrm{List}(G_2)=\{1, 19, 44, 0, 0, 0, 0\}.
	$$
	Furthermore, $G_1$ is isomorphic to the Izumi-Kosaki group $\Gizumi$.
	\begin{table}[H]
		\begin{center}
			\caption{Group table for the generators of \(G_1\)}
			\begin{tabular}{c||c|c|c|c|c|c|}
				& $f_1$ & $f_2$ & $f_3$ & $f_4$ & $f_5$ & $f_6$ \\ \hline\hline
				$f_1$ & $f_5$ & $f_1f_2$ & $f_1f_3$ & $f_1f_4$ & $f_1f_5$ & $f_1f_6$ \\ \hline
				$f_2$ & $f_1f_2f_5$ & $f_6$ & $f_2f_3$ & $f_2f_4$ & $f_2f_5$ & $f_2f_6$ \\ \hline
				$f_3$ & $f_1f_3f_6$ & $f_2f_3$ & $1$ & $f_3f_4$ & $f_3f_5$ & $f_3f_6$ \\ \hline
				$f_4$ & $f_1f_4$ & $f_2f_4f_5$ & $f_3f_4$ & $1$ & $f_4f_5$ & $f_4f_6$ \\ \hline
				$f_5$ & $f_1f_5$ & $f_2f_5$ & $f_3f_5$ & $f_4f_5$ & $1$ & $f_5f_6$ \\ \hline
				$f_6$ & $f_1f_6$ & $f_2f_6$ & $f_3f_6$ & $f_4f_6$ & $f_5f_6$ & $1$ \\ \hline
			\end{tabular}
		\end{center}
	\end{table}
	\begin{table}[H]
		\begin{center}
			\caption{Group table for the generators of \(G_2\)}
			\begin{tabular}{c||c|c|c|c|c|c|}
				& $f_1$ & $f_2$ & $f_3$ & $f_4$ & $f_5$ & $f_6$ \\ \hline\hline
				$f_1$ & $1$ & $f_1f_2$ & $f_1f_3$ & $f_1f_4$ & $f_1f_5$ & $f_1f_6$ \\ \hline
				$f_2$ & $f_1f_2f_5$ & $f_6$ & $f_2f_3$ & $f_2f_4$ & $f_2f_5$ & $f_2f_6$ \\ \hline
				$f_3$ & $f_1f_3f_6$ & $f_2f_3$ & $f_5$ & $f_3f_4$ & $f_3f_5$ & $f_3f_6$ \\ \hline
				$f_4$ & $f_1f_4$ & $f_2f_4f_5$ & $f_3f_4$ & $f_5$ & $f_4f_5$ & $f_4f_6$ \\ \hline
				$f_5$ & $f_1f_5$ & $f_2f_5$ & $f_3f_5$ & $f_4f_5$ & $1$ & $f_5f_6$ \\ \hline
				$f_6$ & $f_1f_6$ & $f_2f_6$ & $f_3f_6$ & $f_4f_6$ & $f_5f_6$ & $1$ \\ \hline
			\end{tabular}
		\end{center}
	\end{table}
\end{thm}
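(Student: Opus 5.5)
To prove Theorem \ref{pair1} I would establish three facts: $G_1\cong\Gizumi$, $G_2\cong\Gizumi^{\omega_1}$, and $G_1\not\cong G_2$. Monoidal equivalence then follows from the Etingof--Gelaki theorem together with the Izumi--Kosaki criterion (Theorem \ref{izumi-kosaki 1}). For the first identification I will use the table of $\Gizumi$ in Section 4.2. The generator table printed there is literally the table of $G_1$. Concretely, I would build the pc-presentation of $G_1$ from its table in GAP and compare it with the presentation of $N\rtimes H$ via \texttt{IsomorphismGroups}. By hand, one could instead write down explicit images of $x,y,s,t$, say $x\mapsto f_1f_4$ and $y\mapsto f_2$. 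This choice is suggested by the $C_4\times C_4$ subgroup $[f_1f_4,f_2]$ flagged as $G$-invariant in that table. One then checks the defining relations of $N\rtimes H$ directly.

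Next I take $N=\langle f_1f_4,f_2\rangle\cong C_4^{\times 2}$ and verify $G_1$-invariance of $\omega_1$ by Lemma \ref{G-inv for 4_4}. It suffices to check on generators that $\det A(g)\equiv 1\pmod 4$. In the $\Gizumi$ model this is exactly the computation $\det A(s)=\det A(t)=1$ carried out in Section 4.2. Since $\omega_1$ is non-degenerate by Lemma \ref{non-deg C4}, Theorem \ref{izumi-kosaki 1} shows that $G_1^{\omega_1}$ is cocommutative, hence a genuine group. That group is isocategorical to $G_1$.

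To identify $G_1^{\omega_1}$, I use Theorem \ref{relation of Gw} and Lemma \ref{eta4x4} to compute $\eta(g,h)$ and obtain the relations of Section 4.2. Equivalently, I use the explicit fp-presentation \texttt{Gw} of Listing \ref{construction of Gw}. I then exhibit an isomorphism from \texttt{Gw} to the group defined by the table of $G_2$. The expected matching is $a,b\mapsto$ order-$4$ elements generating $\langle f_2,f_3\rangle$ or a similar $C_4^2$, and $d,e\mapsto$ suitable lifts, checked relation by relation. In practice, \texttt{IsomorphismGroups} settles this.

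The main obstacle is non-isomorphism, since both groups share the order list $\{1,19,44,0,0,0,0\}$ and Shimizu's invariant does not separate them. My first attempt is to count normal subgroups isomorphic to $C_4\times C_4$, or abelian subgroups of index $4$. If that fails, I would compare the centers and the numbers of conjugacy classes of involutions, or the isomorphism types of $G/Z(G)$ and of the derived subgroups. As a final fallback I rely on the GAP output \texttt{IsomorphismGroups(G,Gw)=fail}, which is a rigorous, exhaustive certificate. Failing to find a cheap hand-checkable invariant is where I expect the difficulty to lie.
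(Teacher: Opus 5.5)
Your plan is correct and is essentially the paper's own argument: realize $G_1$ as $\Gizumi$, twist by the invariant non-degenerate $\omega_1$ on the normal subgroup $\langle f_1f_4,f_2\rangle\cong C_4\times C_4$, and identify the twisted group with $G_2$ (group no.~7 in the order list $\{1,19,44,0,0,0,0\}$). Non-isomorphism is then certified by \texttt{IsomorphismGroups} returning \texttt{fail}.

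Your first cheap invariant already settles that last step. In both groups $Z(G)=G'=\langle f_5,f_6\rangle$ and $G/Z(G)\cong C_2^{4}$. Since all squares lie in $Z(G)$, every subgroup $\cong C_4\times C_4$ contains $Z(G)$ and is therefore automatically normal. A short count then distinguishes the two groups:
\begin{itemize}
\item $G_1$ has exactly three such subgroups, the three listed in the $\Gizumi$ table.
\item $G_2$ has five: $\langle f_2,f_3\rangle$, $\langle f_2,f_1f_4\rangle$, $\langle f_2f_4,f_3\rangle$, $\langle f_2f_4,f_1f_4\rangle$ and $\langle f_1f_3,f_4\rangle$.
\end{itemize}
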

\begin{proof}
	See subsubsections \ref{G1} and \ref{G2}.
\end{proof}

\begin{thm}\label{pair2}
	Let $G_3$ and $G_4$ be the groups generated by the elements defined in the following group tables.
	While $G_3$ and $G_4$ are not isomorphic as groups, they are monoidally equivalent.
	The order list for both groups is given by the list:
	$$
	\mathrm{List}(G_3)=\mathrm{List}(G_4)=\{1, 31, 32, 0, 0, 0, 0\}.
	$$
	\begin{table}[H]
		\begin{center}
			\caption{Group table for the generators of \(G_3\)}
			\begin{tabular}{c||c|c|c|c|c|c|}
				& $f_1$ & $f_2$ & $f_3$ & $f_4$ & $f_5$ & $f_6$ \\ \hline\hline
				$f_1$ & $1$ & $f_1f_2$ & $f_1f_3$ & $f_1f_4$ & $f_1f_5$ & $f_1f_6$ \\ \hline
				$f_2$ & $f_1f_2f_5$ & $1$ & $f_2f_3$ & $f_2f_4$ & $f_2f_5$ & $f_2f_6$ \\ \hline
				$f_3$ & $f_1f_3f_6$ & $f_2f_3f_5$ & $1$ & $f_3f_4$ & $f_3f_5$ & $f_3f_6$ \\ \hline
				$f_4$ & $f_1f_4f_5$ & $f_2f_4$ & $f_3f_4$ & $1$ & $f_4f_5$ & $f_4f_6$ \\ \hline
				$f_5$ & $f_1f_5$ & $f_2f_5$ & $f_3f_5$ & $f_4f_5$ & $1$ & $f_5f_6$ \\ \hline
				$f_6$ & $f_1f_6$ & $f_2f_6$ & $f_3f_6$ & $f_4f_6$ & $f_5f_6$ & $1$ \\ \hline
			\end{tabular}
		\end{center}
	\end{table}
	\begin{table}[H]
		\begin{center}
			\caption{Group table for the generators of \(G_4\)}
			\begin{tabular}{c||c|c|c|c|c|c|}
				& $f_1$ & $f_2$ & $f_3$ & $f_4$ & $f_5$ & $f_6$ \\ \hline\hline
				$f_1$ & $1$ & $f_1f_2$ & $f_1f_3$ & $f_1f_4$ & $f_1f_5$ & $f_1f_6$ \\ \hline
				$f_2$ & $f_1f_2f_5$ & $f_5$ & $f_2f_3$ & $f_2f_4$ & $f_2f_5$ & $f_2f_6$ \\ \hline
				$f_3$ & $f_1f_3f_6$ & $f_2f_3f_5$ & $1$ & $f_3f_4$ & $f_3f_5$ & $f_3f_6$ \\ \hline
				$f_4$ & $f_1f_4f_5$ & $f_2f_4$ & $f_3f_4$ & $1$ & $f_4f_5$ & $f_4f_6$ \\ \hline
				$f_5$ & $f_1f_5$ & $f_2f_5$ & $f_3f_5$ & $f_4f_5$ & $1$ & $f_5f_6$ \\ \hline
				$f_6$ & $f_1f_6$ & $f_2f_6$ & $f_3f_6$ & $f_4f_6$ & $f_5f_6$ & $1$ \\ \hline
			\end{tabular}
		\end{center}
	\end{table}
\end{thm}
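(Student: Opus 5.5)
The statement has two halves, and I will handle them separately: $G_3\not\cong G_4$, and $\Rep(G_3)\simeq\Rep(G_4)$. The pipeline \texttt{GwStructure} of Section 4 is organised around exactly this split.

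\textbf{Non-isomorphism.} I would present both groups as finitely presented quotients of $F_6=\langle f_1,\dots,f_6\rangle$, reading one relator per entry of the displayed tables. Powers such as $f_i^2=f_5$ or $f_i^2=1$ come from the diagonal entries. Commutator relations such as $f_jf_i=f_if_jf_5$ come from the off-diagonal entries of the lower triangle; these define $f_jf_i$ in normal form. One should first confirm that each presentation defines a group of order $64$, i.e.\ that it is a consistent polycyclic presentation. I would then run \texttt{IsomorphismGroups}, which should return \texttt{fail}. For a human-checkable certificate, I would also compare a cheap invariant:
\begin{itemize}
\item the number of elements that are squares;
\item the isomorphism type of the centre, of $[G,G]$, or of $\Phi(G)$;
\item the number of normal subgroups isomorphic to $C_2^{\times 4}$.
\end{itemize}
Note that the order lists must agree (Shimizu's theorem), and a direct count gives $\{1,31,32,0,0,0,0\}$ for both. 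I expect the distinction to be visible in the abelianization or the centre, since $G_4$ has the extra power relation $f_2^2=f_5$.

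\textbf{Monoidal equivalence.} The route is Theorem~\ref{izumi-kosaki 1} together with Theorem~\ref{relation of Gw}.
\begin{enumerate}
\item Among the normal subgroups returned by \texttt{ObtainedSubgroups}$(G_3)$, locate an abelian normal $N\cong C_2^{\times 4}$ (or $C_4^{\times 2}$) and a non-degenerate cocycle $\omega_k$ from Table~\ref{nondeg 28 cocycles} that is $G_3$-invariant. Invariance is checked with the criterion of Lemma~\ref{G-inv for 2_2_2_2}, equivalently $A(g)^t\Lambda_{\omega_k}A(g)\equiv\Lambda_{\omega_k}$ over $\mathbb F_2$ for each generator $g$. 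By the lemma it suffices to check generators, since invariance is preserved under products.
\item With such a pair, the Etingof--Gelaki theorem and Theorem~\ref{izumi-kosaki 1} give a genuine group $G_3^{\omega_k}$ with $\Rep(G_3^{\omega_k})\simeq\Rep(G_3)$.
\item Compute the factor set: solve for $\xi_g$ with $\partial\xi_g=\omega_k^g\overline{\omega_k}$ on the generators of $G_3$, then read off $\eta(g,h)\in N$ from $\langle\sigma,\eta(g,h)\rangle=\xi_g(\sigma)\xi_h(\sigma^g)\overline{\xi_{gh}(\sigma)}$.
\item Build the presentation of $G_3^{\omega_k}$ via \texttt{FinalAllGomegaRelationsC2xC2xC2xC2}.
\item Exhibit an explicit isomorphism $G_3^{\omega_k}\cong G_4$.
\end{enumerate}
Transitivity of monoidal equivalence then finishes the proof.

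The main obstacle is step 1: selecting the right $N$ and $k$, and making the $\eta$ computation transparent rather than a black box. For $N\cong C_2^{\times 4}$, $\xi_g$ is a $\pm1$ or $\pm i$ valued quadratic form, so I would write it explicitly as $\xi_g(\sigma)=\zeta_4^{Q_g(\sigma)}$ with $Q_g$ determined by the symmetric matrix $A(g)^t\Lambda A(g)-\Lambda$. This parallels the proof of Lemma~\ref{eta2x2}. I would first try the $N$ generated by $f_5,f_6$ together with two of $f_1,\dots,f_4$, because conjugation by $f_1,\dots,f_4$ acts through unipotent matrices there, as the tables show. The twisted relations should then differ from those of $G_3$ only by one power relation, turning $f_2^2=1$ into $f_2^2=f_5$. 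That is exactly the difference between the tables of $G_3$ and $G_4$, so the isomorphism in step 5 should be the identity on generators.
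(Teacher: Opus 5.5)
Your proposal is correct and is essentially the paper's argument: an \texttt{IsomorphismGroups} test for non-isomorphism, and an Izumi--Kosaki twist of $G_3$ along a normal $C_2^{\times 4}$ for the equivalence. Your guess can also be confirmed by hand: for $N=\langle f_3,f_4,f_5,f_6\rangle$, dual coordinates $\sigma(f_j)=(-1)^{\sigma_j}$ and the $G_3$-invariant non-degenerate cocycle $\omega(\sigma,\tau)=(-1)^{\sigma_3\tau_5+\sigma_4\tau_6}$, one may take $\xi_{f_1}(\sigma)=(-1)^{\sigma_5\sigma_6}$ and $\xi_{f_2}(\sigma)=\zeta_4^{\sigma_5}$, both lying in the central subalgebra $\C[\langle f_5,f_6\rangle]$, so $\tilde{f}_1^2=1$, $\tilde{f}_2^2=f_5$ and every other relation of $G_3$ survives, i.e.\ $G_3^{\omega}\cong G_4$ identically on generators (whereas $\langle f_2,f_4,f_5,f_6\rangle$ admits no $G_3$-invariant non-degenerate cocycle).

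Two slips need fixing. First, $G_3$ and $G_4$ have the same abelianization $C_2^{\times 4}$, the same centre, derived and Frattini subgroup $\langle f_5,f_6\rangle$, and the same set of squares, so of your cheap invariants only the number of normal subgroups isomorphic to $C_2^{\times 4}$ (three versus two) separates them. Second, $Q_g$ must be read off from $A(g)^tUA(g)+U$ with $U$ the strictly upper-triangular part of $\Lambda_\omega$, since $A(g)^t\Lambda_\omega A(g)-\Lambda_\omega$ vanishes by invariance.
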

\begin{proof}
	See subsubsections \ref{G3} and \ref{G4}.
\end{proof}


\end{document}